\documentclass[12pt]{amsart}
\usepackage{graphicx} 
\usepackage{xcolor}
\usepackage{amsmath}
\usepackage{amsfonts}
\usepackage{amsthm}
\usepackage{microtype}
\usepackage{amssymb}
\usepackage[hidelinks]{hyperref}
\usepackage{enumitem}
\usepackage{setspace}   
\usepackage{etoolbox} 
\usepackage{secdot}
\usepackage{geometry} 

\newtheorem{main}{Theorem}
\newtheorem{theorem}{Theorem}[section]
\newtheorem{corollary}[theorem]{Corollary}
\newtheorem{fact}[theorem]{Fact}
\newtheorem*{theorem*}{Theorem}
\newtheorem{prop}[theorem]{Proposition}
\newtheorem*{examples}{Examples}
\newtheorem{example}[theorem]{Example}
\newtheorem{lemma}[theorem]{Lemma}
\theoremstyle{definition}
\newtheorem{definition}[theorem]{Definition}

\theoremstyle{remark}
\newtheorem{remark}[theorem]{Remark}

\DeclareMathSymbol{\mlq}{\mathord}{operators}{``}
\DeclareMathSymbol{\mrq}{\mathord}{operators}{`'}

\DeclareMathOperator{\tp}{tp}

\DeclareMathOperator{\Th}{Th}

\DeclareMathOperator{\dcl}{dcl}
\DeclareMathOperator{\acl}{acl}

\DeclareMathOperator{\res}{res}
\DeclareMathOperator{\Res}{Res}

\DeclareMathOperator{\ACVF}{ACVF}

\DeclareMathOperator{\acvfball}{ACVF^{bl}}

\DeclareMathOperator{\fin}{\subseteq_{\text{fin}}}

\def\ind{\mathrel{\raise0.2ex\hbox{\ooalign{\hidewidth$\vert$\hidewidth\cr\raise-0.9ex\hbox{$\smile$}}}}}

\title{Relative distality and incidence bounds in positive characteristic}
\author{Mira Tartarotti}

\begin{document}

{
\let\clearpage\relax
\maketitle
}
\begin{abstract}
    We prove Szemerédi-Trotter-type incidence bounds for algebraic relations in any field $F$ admitting a valuation with finite residue field.
    This includes $\mathbb F_q(t)$, $\mathbb F_q((t))$, and all finitely generated extensions of finite fields. 
    Bays and Martin \cite{bays_martin} show that any algebraic $K_{d,s}$-free relation $E\subseteq F^k\times F^\ell$ admits a power-saving in the classical Zarankiewicz bound of Kővári, Sós, and Turán.
    Answering a question of the authors, we show that this power-saving is uniform in $F$:
    There is some $\epsilon>0$ depending only on $d$ and the complexity of $E$ (and not on $F$) such that for $A\subseteq F^k$ and $B\subseteq F^\ell$, the number of edges $|E(A,B)|$ is at most $O(|A|^{1-\epsilon}|B|^{1-\frac{1-\epsilon}{d}}+|A|+|B|)$.
    For the case where $A$ is a set of points and $B$ is a set of lines in $F^2$ and $E$ is the incidence relation, we prove the explicit bound $O(|P|^{\frac{8}{9}}|L|^{\frac{5}{9}}+|P|+|L|)$.
    As the central ingredient of the proof, we introduce the model-theoretic notion of distality \emph{relative to a definable set}. 
    We begin developing this notion and prove that the theory of algebraically closed valued fields is distal relative to the residue field. 
    An application of a result of Chernikov, Peterzil, and Starchenko \cite{chernikov2023modeltheoreticelekesszabostableominimal}, based on work of Chernikov, Galvin, and Starchenko \cite{chernikov2020cuttinglemmazarankiewiczsproblem}, yields our incidence bounds.
\end{abstract}

\section{Introduction}

A binary relation $E\subseteq X\times Y$ is called \emph{$K_{d,s}$-free} for positive integers $d$ and $s$ if $E$ contains no set $A\times B$ where $|A|\geq d$ and $|B|\geq s$. 
An application of Jensen's inequality yields the following classical result, which bounds the size of $E$ when $E$ is $K_{d,s}$-free.

\begin{fact}[Kővári-Sós-Turán \cite{Kvri1954OnAP}]\label{fact: Kövari-Sos-Turan}
    Suppose $E\subseteq X\times Y$ is $K_{d,s}$-free and $|X|=m$, $|Y|=n$. Then
    \[|E|=O_{d,s}(mn^{1-1/d}+n).\]
\end{fact}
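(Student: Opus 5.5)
The plan is the standard double counting of $d$-element "stars," with Jensen's inequality (convexity of $t\mapsto\binom{t}{d}$) supplying the lower bound. Let $X$ have size $m$ and $Y$ size $n$. For $y\in Y$ write $\deg(y)=|\{x\in X:(x,y)\in E\}|$, so that $|E|=\sum_{y\in Y}\deg(y)$. We count
\[N=\bigl|\{(D,y): D\subseteq X,\ |D|=d,\ y\in Y,\ D\times\{y\}\subseteq E\}\bigr|\]
in two ways.

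\emph{Upper bound.} Fix a $d$-subset $D\subseteq X$. The set of $y$ with $D\times\{y\}\subseteq E$ has fewer than $s$ elements; otherwise $D$ together with $s$ such $y$'s would give a copy of $K_{d,s}$ in $E$. Hence $N\le (s-1)\binom{m}{d}\le (s-1)m^d/d!$.

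\emph{Lower bound.} For each $y$, the number of admissible $D$ is $\binom{\deg(y)}{d}$, so $N=\sum_y\binom{\deg(y)}{d}$. Let $f(t)=\binom{t}{d}$ for $t\ge d-1$ and $f(t)=0$ otherwise. This $f$ is convex on $\mathbb R$ and agrees with the binomial coefficient on nonnegative integers. Jensen's inequality then gives
\[N\ge n\,f(\bar t),\qquad \bar t=|E|/n .\]

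\emph{Combining.} We split into two cases.
\begin{itemize}
\item If $\bar t<2d$, then $|E|<2dn$, which is $O(n)$.
\item If $\bar t\ge 2d$, then $\bar t-i\ge \bar t/2$ for all $i<d$. Hence $f(\bar t)\ge (\bar t/2)^d/d!$, and therefore
\[n\Bigl(\frac{|E|}{2n}\Bigr)^d\le (s-1)m^d .\]
Rearranging gives $|E|\le 2(s-1)^{1/d}\,m\,n^{1-1/d}$.
\end{itemize}
In both cases $|E|=O_{d,s}(mn^{1-1/d}+n)$.

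The only point needing care is applying Jensen to integer degrees, including degrees below $d$. The convex extension $f$ handles this. Alternatively, one can discard vertices of degree less than $d$, which lose at most $(d-1)n$ edges, and apply convexity of $t^{\underline d}$ on $[d-1,\infty)$.
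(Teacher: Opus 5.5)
Your proof is correct, and it is the standard double-counting-plus-Jensen argument that the paper alludes to; the paper gives no proof of its own, only citing the classical result and noting that it follows from Jensen's inequality. Extending $t\mapsto\binom{t}{d}$ by $0$ below $d-1$ gives a convex function that handles vertices of small degree, and splitting at average degree $2d$ produces exactly the two terms $mn^{1-1/d}$ and $n$ in the stated bound.
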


A celebrated result of Szemerédi and Trotter \cite{Szemerdi1983ExtremalPI} improves on this bound when $E$ is the incidence graph of points and lines in the real plane.  

\begin{fact}[Szemerédi-Trotter \cite{Szemerdi1983ExtremalPI}]\label{fact: sz tr}
    Let $P$ be a set of $m$ points and $L$ a set of $n$ lines, respectively in $\mathbb R^2$. Then 
    \[|\{(p,l)\in P\times L~:~p\in l\}|= O(m^{2/3}n^{2/3}+m+n).\]
\end{fact}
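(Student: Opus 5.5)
The statement to address is the Szemerédi--Trotter bound (Fact~\ref{fact: sz tr}). I would use the crossing-number method of Székely, which is self-contained apart from Euler's formula for planar graphs.

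First I reduce to a simple geometric graph. Points lying on no line of $L$ contribute no incidences, so I discard them. Write $I$ for the number of incidences. Each line $\ell\in L$ containing $k_\ell\ge 1$ points of $P$ I cut into $k_\ell-1$ segments joining consecutive points of $P$ along $\ell$. This gives a drawing $G$ with vertex set $P$ and $\sum_\ell (k_\ell-1)=I-n$ edges. Two distinct lines meet in at most one point, so:
\begin{itemize}
\item no pair of vertices is joined by two edges from different lines, and consecutive segments on one line join distinct pairs, so $G$ is simple with $e=I-n$ edges;
\item any two edges cross at most once, and only if they come from different lines, so the number of crossings is at most $\binom{n}{2}<n^2/2$.
\end{itemize}

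Second, I prove the crossing lemma: every simple graph drawn in the plane with $v$ vertices and $e\ge 4v$ edges has at least $e^3/(64v^2)$ crossings.
\begin{itemize}
\item Euler's formula gives $e\le 3v$ for planar simple graphs. Deleting one edge per crossing therefore yields $\mathrm{cr}(G)\ge e-3v$.
\item Keep each vertex independently with probability $p=4v/e\le 1$, together with the induced edges and crossings. Taking expectations of the basic inequality gives $p^4\,\mathrm{cr}\ge p^2e-3pv$.
\item Substituting $p$ gives $\mathrm{cr}\ge e^3/(64v^2)$.
\end{itemize}
The one point needing care is the expectation step. A crossing involves two edges with four distinct endpoints, since edges sharing an endpoint lie on distinct lines meeting only there. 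So each crossing survives with probability exactly $p^4$.

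Third, I combine the two steps. If $e<4m$ then $I<4m+n$, which is within the claimed bound. Otherwise the crossing lemma gives
\[
\frac{(I-n)^3}{64m^2}\le \frac{n^2}{2},
\]
so $I-n\le 32^{1/3}m^{2/3}n^{2/3}$, and in both cases $I=O(m^{2/3}n^{2/3}+m+n)$.

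The main obstacle is the crossing lemma, specifically justifying that crossings survive with probability $p^4$. That rests on the simplicity and single-crossing properties secured in the first step, which use that two real lines meet in at most one point. The ordering of points along a real line, needed to define the segments, is exactly what fails over fields such as $\mathbb F_q(t)$. That is why this proof does not transfer to positive characteristic.
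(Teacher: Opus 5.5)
Your argument is correct: it is Sz\'ekely's crossing-lemma proof, and you isolate the geometric facts it needs. These are that $G$ is simple, that each pair of lines gives at most one crossing, and that two crossing edges have four distinct endpoints, which makes the survival probability exactly $p^4$. One further fact, which you leave implicit, holds for the same reason: no segment passes through a vertex, since a point of $P$ inside a segment of $\ell$ would be a point of $P\cap\ell$ strictly between two consecutive ones. Two harmless bookkeeping slips:
\begin{enumerate}
\item[(i)] The edge count is $I-n_1$, where $n_1\le n$ is the number of lines meeting $P$. So you should write $e\ge I-n$ rather than $e=I-n$.
\item[(ii)] After discarding points, the vertex count is some $m'\le m$, and the dichotomy should be $e<4m'$ versus $e\ge 4m'$.
\end{enumerate}
Both only improve the bound.

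The paper gives no proof of this fact; it quotes it as background. The useful comparison is therefore with the paper's own method.
\begin{enumerate}
\item[(a)] Apply Theorem~\ref{thm: distal zarankiewicz} with $d=2$ (incidences are $K_{2,2}$-free) and a distal cell decomposition of exponent $t=2$. This gives $\epsilon=\frac{1}{3}$ and hence exactly $O(m^{2/3}n^{2/3}+m+n)$.
\item[(b)] Over $\mathbb R$ such a decomposition exists: take the vertical decomposition of the arrangement of the given lines. It has $O(|B|^2)$ cells, each cut out by boundedly many lines of $B$.
\item[(c)] This cutting-based route is close in spirit to the original cell-decomposition proof of Szemer\'edi and Trotter. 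It is the route the paper transports to fields with a valuation having finite residue field. There, as you note, the ordering along lines and Euler's formula are unavailable.
\item[(d)] In that setting the paper's best exponent is $t=5$. That is why Theorem~\ref{thm: point line explicit bound} gives the exponents $\frac{8}{9},\frac{5}{9}$ instead of $\frac{2}{3},\frac{2}{3}$.
\end{enumerate}
Your approach buys a short, self-contained proof with the explicit constant $32^{1/3}$. However, it is intrinsically topological and has no analogue in the non-archimedean setting, whereas the cell-decomposition route is the one that generalizes.
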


Since two distinct points are incident with exactly one line, $E$ is $K_{2,2}$-free. 
Thus, Fact \ref{fact: Kövari-Sos-Turan} gives the upper bound $O(mn^{1/2}+n)$, and Fact \ref{fact: sz tr} gives a power saving whenever $m^{2-\epsilon}\geq n$ for some $\epsilon>0$. 
Bounds of a similar nature were proved for the more general case where $E$ is an \emph{algebraic} relation -- that is, a Boolean combination of zero-sets of polynomials, in $\mathbb C$ by Elekes and Szabó \cite[Theorem 9]{elekes2012find}. 
In particular, such bounds hold when $E$ is an algebraic relation in any field of characteristic zero, by completeness of the theory of algebraically closed fields.
Fox et al. \cite{fox2015semialgebraicversionzarankiewiczsproblem} show similar bounds when $E$ is \emph{semi-algebraic} in $\mathbb R$, i.e. a set defined by polynomial equalities and inequalities. 
In both cases, the power saving depends only on $d$ and $s$ and on the \emph{description complexity} of $E$, that is, the arity and maximal degree of polynomials involved in the definition of $E$.
Bays and Martin \cite{bays_martin} prove Szemerédi-Trotter-type bounds for algebraic relations in certain fields of positive characteristic, namely those which admit a valuation with finite residue field.
The power saving obtained in that paper depends on the size of that residue field. 
Answering \cite[Question 6.3]{bays_martin}, we show the following result, which asserts that this dependence can be removed.
For concreteness, we state the result in terms of the function field $\mathbb F_q(t)$, but the result holds for any field admitting a valuation with residue field $\mathbb F_q$. 

\begin{main}[Special case of Theorem \ref{thm: incidence bounds valued fields}]\label{thm: intro 1} 
    For positive integers $k$ and $d$, there is some $\epsilon>0$ depending only on $k$ and $d$, such that for any algebraic relation $E\subseteq \mathbb F_q(t)^r\times  \mathbb F_q(t)^\ell$ of description complexity at most $k$ and any finite $A\subseteq \mathbb F_q(t)^r$ and $B\subseteq \mathbb F_q(t)^\ell$ with the property that $E(A,B)$ is $K_{d,s}$-free for some $s\in \mathbb N$, we have
    \[\big|E(A,B)\big|=O_{d,s,k,q}\big(|A|^{1-\epsilon}|B|^{1-\frac{1}{d}+\frac{\epsilon}{d}}+|A|+|B|\big).\]
\end{main}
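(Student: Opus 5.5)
The plan is to derive the bound from a distal cell decomposition with bounded exponent, obtained from distality of $\ACVF$ relative to the residue field, and then to feed it into the Chernikov--Galvin--Starchenko cutting-lemma machinery in the form of Chernikov--Peterzil--Starchenko. First I embed $\mathbb F_q(t)$ into its algebraic closure with the $t$-adic valuation, giving a model $M\models\ACVF_{p,p}$ whose residue field is $\overline{\mathbb F_q}$. The residue field of $\mathbb F_q(t)$ itself is $\mathbb F_q$, so every point of $\mathbb F_q(t)^r$ lies in a set on which the residue map takes only $q$ values. An algebraic relation $E$ of description complexity at most $k$ is quantifier-free definable in the ring language by a formula $\varphi(x;y)$. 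The number of such formulas, up to the choice of parameters, is bounded in terms of $k$, so it suffices to handle finitely many $\varphi$ uniformly. All constants must be independent of $q$ and the field except where the statement allows $O_{d,s,k,q}$, and this $q$-dependence is permitted only in the multiplicative constant.

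The central step is relative distality. I will show that $\varphi(x;y)$ admits a distal cell decomposition relative to the residue sort. Concretely, there should be a formula $\psi(x;y_1,\dots,y_t,z)$, with $z$ ranging over a power of the residue field, such that for every finite $B$ the space $M^r$ is covered by cells $\psi(M;b_1,\dots,b_t,c)$ with $b_i\in B$. Each cell is crossed by no $\varphi(x;b)$ for $b\in B$, and $t$ depends only on $\varphi$. I expect to prove this by swiss-cheese decomposition: the definable subsets of $M$ defined from $B$ are Boolean combinations of balls whose centres and radii are determined by $B$ through roots of polynomials. A point's $\varphi(x;B)$-type is then determined by which balls it lies in, together with the residues of certain differences, and those residues are parameters $z$ from the residue field. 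Higher dimensions follow by a standard fiberwise induction. This is the main obstacle. The exponent $t$, and hence $\epsilon$, must come out of the ball combinatorics, which is uniform across all $\ACVF$ models of given characteristic, and from the degree bound $k$. It must not depend on how many residue classes there are.

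Now I specialise to $A\subseteq\mathbb F_q(t)^r$. The residue parameters $z$ that actually occur for points of $A$ lie in $\mathbb F_q^{m}$, so each $t$-tuple from $B$ yields at most $q^m$ cells, and there are $O_q(|B|^t)$ cells in total. This is a genuine distal cell decomposition of polynomial exponent $t$ on the finite bipartite structure induced on $(A,B)$. The quantity $q$ enters only the constant $q^m$.

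Finally I apply the Chernikov--Peterzil--Starchenko version of the Chernikov--Galvin--Starchenko Zarankiewicz theorem. It states that if $E(A,B)$ is $K_{d,s}$-free and $E$ admits a distal cell decomposition of exponent $t$ with constant $C$, then
\[|E(A,B)|=O_{C,d,s}\big(|A|^{\frac{(t-1)d}{td-1}}|B|^{\frac{td-t}{td-1}}+|A|+|B|\big).\]
Setting $1-\epsilon=\frac{(t-1)d}{td-1}$ gives $\epsilon=\frac{d-1}{td-1}>0$, which depends only on $t$ and $d$, and hence only on $k$ and $d$. A direct computation shows that $1-\frac{1-\epsilon}{d}$ equals the exponent of $|B|$. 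Since $C$ depends on $q$, the implied constant is $O_{d,s,k,q}$, as the statement requires.
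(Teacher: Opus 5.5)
Your overall plan is the paper's: pass to a model of $\ACVF$, prove distality relative to $\mathbf k$, use finiteness of the residue field to bound the number of cells, then apply Theorem~\ref{thm: distal zarankiewicz}. The problem is the central step. You only sketch it, and the sketch misses what actually has to be proved.

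\textbf{The induction does not close.} The fiberwise induction (Lemma~\ref{lem: relative distality in one variable enough}) replaces $\varphi(x,x';y)$ by unary formulas $\theta^{\pm}(x';y,z,u_{\mathbf k})\equiv\forall x\,\big(\psi(x,z,u_{\mathbf k},x')\rightarrow\varphi^{\pm}(x,x',y)\big)$. Their parameters include the residue-sort variables of the first-stage cells.
\begin{itemize}
\item So the unary case is needed for \emph{all} quantifier-free $L_{\Gamma,k}$-formulas, including atoms $P\big(\Res(\overline f,\overline g)(x,y_{\mathbf K}),y_{\mathbf k}\big)=0$. Already for point-line incidence the second stage is the residue formula $\theta_3$. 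This is why the paper works in the three-sorted language.
\item Your unary analysis (balls centred at roots of polynomials over $B$, plus residues of normalized differences of centres) does not cover these atoms. For instance, $\Res(x-b_1,b_2)=\alpha\neq0$ defines the open ball of radius $v(b_2)$ around $b_1+\tilde\alpha b_2$, for any $\tilde\alpha$ with $\res(\tilde\alpha)=\alpha$. In general this ball has no centre algebraic over the parameters.
\item The harder situation is when $a$ is at distance $\nu_0$ from a cluster $C_0$ of roots that are pairwise closer than $\nu_0$. Then no two centres are at distance $\nu_0$ to normalize against. Yet moving $a$ to some $a'$ at the same distance multiplies $\Res(f_\ell,g_\ell)(a,b)$ by $\res\big(\frac{a'-c_0}{a-c_0}\big)^{k(\ell,b)}$, where $k(\ell,b)$ is the net multiplicity of $C_0$.
\item One must show that the values $\Res(f_\ell,g_\ell)(a,b)$, $b\in B$, are pinned down by boundedly many of them. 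Lemma~\ref{lemma: determining images in the residue field} does this by fixing one such value for each of the boundedly many multiplicities $k(\ell,b)$. Your plan has no substitute for this step.
\end{itemize}

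\textbf{The counting step is unjustified.} You assert that the residue parameters occurring for points of $A$ lie in $\mathbb F_q^m$.
\begin{itemize}
\item In your formulation of relative distality, the residue parameter ranges freely over $\mathbf k(M)^m=\overline{\mathbb F_q}^m$.
\item Your construction uses residues of differences of roots, which lie only in $\acl(B)$, and these need not be in $\mathbb F_q$. Take $q$ odd, $\delta\in\mathbb F_q$ a non-square, and $\pm\gamma$ the roots of $x^2-\delta$; the point $a=1$ gives $\res\big(\frac{1-\gamma}{-2\gamma}\big)\notin\mathbb F_q$.
\item What is needed, and what Definition~\ref{def: relative distality on a set} builds in, is that the residue parameter be $f(a,\bar b)$ for a $\emptyset$-definable map $f$. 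Then it lies in $\dcl(F)\cap\mathbf k(K)=\mathbb F_q$.
\item The root centres must also be traded for parameters from $B$. The paper does both via Lemma~\ref{lemma: algebraic parameters in USHD} together with quantifier elimination, as in the proof of Proposition~\ref{prop: relative distality of valuation formulas}.
\end{itemize}

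\textbf{Uniformity in the characteristic.} The statement requires $\epsilon$ to be independent of $p$. So the stills must be uniform over the incomplete theory $\ACVF$; the paper uses quantifier elimination for $\ACVF$ itself. Uniformity only within each characteristic, as you state, is not enough.
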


In fact, we prove a slightly more general version of Theorem \ref{thm: intro 1}, where $E$ is defined by any quantifier-free formula with parameters in $F$ in the language of valued fields.

We note two ways in which this result is sharp: 
Firstly, the assumption that $F$ has a valuation with finite residue field cannot be removed (though it is likely possible to weaken it).
Indeed, if $F$ is the algebraic closure of $\mathbb F_p$ and $E$ is point-line incidence, we can set $P_m$ and $L_m$ to be the sets of all points and lines respectively in $\mathbb F_{p^m}\leq F$, and we see that there are $p^{3m}$-many incidences between $P_m$ and $L_m$, so the classical bound from Fact \ref{fact: Kövari-Sos-Turan} is attained. 
Secondly, the dependence on $q$ of the constant factor in Theorem \ref{thm: intro 1} cannot be removed, for the same reason.
We remark in Section \ref{sec: applications} that the dependence can be shown to be polynomial in $q$ by carefully chasing constants; as we do not provide details, we omit this in the theorem statement.

Theorem \ref{thm: intro 1} does not establish quantitative estimates for $\epsilon$, though this is in theory possible: The proof is constructive, up to eliminating quantifiers in the theory of algebraically closed valued fields, which can be done effectively by \cite{weispfenning2006quantifier}.
For the
specific case of point-line incidence, we give an ad-hoc account of the argument for Theorem \ref{thm: intro 1}, to deduce the following explicit bound.

\begin{main}[Special case of Theorem \ref{thm: point line explicit bound}]\label{thm: intro 2}
    Let $P$ be a finite set of points and $L$ a finite set of lines, in $\mathbb F_q(t)^2$. Then
    \[\big|\{(p,l)\in P\times L:\ p\in l\}\big|=O_q\big(|P|^{\frac{8}{9}}|L|^{\frac{5}{9}}+|P|+|L|\big).\]
\end{main}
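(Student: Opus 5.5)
Here is the plan: prove the explicit point--line bound by an ad hoc distal cell decomposition relative to the residue field, followed by the standard cutting/divide-and-conquer argument of Chernikov--Galvin--Starchenko. I will work in the completion, or simply in $\mathbb F_q(t)$ with the $t$-adic valuation $v$, whose residue field is $\mathbb F_q$, and treat lines as $y=ax+b$ (vertical lines, if any, are handled separately and symmetrically).

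The first step is to build the cells. Given a finite set $L_0$ of $r$ lines, I will decompose the plane into $O_q(r^{c})$ pieces, each of which is crossed by few lines of $L_0$. "Crossed" here means the line neither contains the cell nor misses it. This is the analogue of a cutting lemma. The decomposition is controlled by closed balls determined by pairwise data. For a point $p=(x,y)$ and a line $\ell$, incidence $p\in\ell$ holds iff $y-ax-b=0$. Relative to a reference configuration, the valuation $v(y-ax-b)$ and its residue behave in a tree-like (ultrametric) way. In the first coordinate, the finitely many relevant balls are those spanned by the $x$-coordinates of intersection points of pairs from $L_0$. That gives $O(r^2)$ points, and hence $O(r^2)$ balls in the ball tree. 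Each ball splits into $q$ residue classes of maximal sub-balls, which contributes the factor depending on $q$.

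The second step is to handle the fibres over each such $x$-ball. There I will order the lines by their values modulo the ball, again an ultrametric tree of $O(r)$ nodes with branching $q$. A cell is then a product-like set, a Swiss cheese in $x$ times a fibred Swiss cheese in $y$. A line of $L_0$ not among the defining ones either misses the cell or crosses it only if it separates two defining lines. The relative-distality point is that a cell is determined by boundedly many lines together with residue-field data of size $q$. So cells have bounded description, and the number of cells is $O_q(r^{c})$ for an explicit $c$. I expect $c=4$, from pairs of intersections in each coordinate. The usual random-sampling argument (Clarkson--Shor / $\varepsilon$-net) then gives an $\frac1r$-cutting of $L$ into $O_q(r^{c})$ cells, each crossed by at most $|L|/r$ lines.

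The third step is the counting. In each cell $\Delta$ with $m_\Delta$ points, incidences with lines that contain $\Delta$ are bounded by counting those lines. Incidences with crossing lines are bounded using Fact \ref{fact: Kövari-Sos-Turan} with $d=2$ applied to (crossing lines, points), giving $O(m_\Delta (n/r)^{1/2}+n/r)$ per cell. Summing gives $O(m(n/r)^{1/2}+r^{c-1}n)$. Optimising $r$ and then symmetrising, or swapping roles via duality, should yield an exponent pair. With the cell exponents this argument produces, I expect the outcome $|P|^{8/9}|L|^{5/9}$. Concretely, with $d=2$ and the Chernikov--Galvin--Starchenko formula $m^{\frac{t(d-1)}{td-1}}n^{\frac{td-t}{td-1}}$, the pair $8/9,5/9$ corresponds to the effective cutting exponent $t=5$ in the $\varepsilon$-cutting. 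Lines containing a whole cell are controlled because two points determine a line: a line can contain at most one cell with $\geq 2$ points unless cells are degenerate.

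The main obstacle is the first and second steps: making the relative cell decomposition genuinely distal. This means proving that membership of a point in a cell determines its incidence pattern with every non-crossing line, and checking that the cell count exponent is small enough, with the residue field contributing only the $O_q$ constant. I would first try to verify this by hand via the valuation inequality $v(y-ax-b)\ge\min(\cdots)$ on balls.
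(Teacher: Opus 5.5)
\textbf{Verdict: genuine gap.} Your overall shape matches the paper's: a cell decomposition relative to the residue field in which $\mathbb F_q$ only affects the constant, followed by the distal Zarankiewicz bound (Theorem \ref{thm: distal zarankiewicz} with $d=s=2$). But the step that carries all the content is the construction of that decomposition and the bound on its exponent. You leave exactly this as ``the main obstacle'', and the sketch you give for it does not work as stated.

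\emph{Constancy over the $x_1$-cells is unproved.} You claim that over each cell of the ball tree spanned by the intersection abscissae $x_{bb'}$, the configuration of the values $a_1b_1+b_2$ is fixed. Since $v\big((a_1b_1+b_2)-(a_1b_1'+b_2')\big)=v(b_1-b_1')+v(a_1-x_{bb'})$, these distances move with $a_1$ on annular cells and on the outer cell. The order type is in fact constant there, and so are the residues of ratios of differences that your residue-class cells in the $x_2$-direction rely on. However, this needs an ultrametric argument on triples of lines, and it only works once the $x_1$-cells are cut down to single residue classes.

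\emph{Your cells do not form a distal cell decomposition.} The Chernikov--Galvin--Starchenko cutting lemma (the Clarkson--Shor sampling you invoke) requires cells $R_c$ that are crossed by \emph{no} line of the defining set, not merely by few. They must be indexed by a bounded tuple $c$ of lines and retained exactly when a conflict set $I(c)$ of \emph{single} lines misses the set. The exponent is the growth rate of the number of all such cells. The atomicity conditions of your decomposition, such as ``no intersection abscissa $x_{b'b''}$ lies in this annulus'', concern pairs of lines and are not of this form. If you drop them and keep every non-crossed product cell (each defined by up to nine lines), the count is no longer governed by the $O(r^2)$-size ball tree. For instance, a piece of one line $b$ over an annulus cut out by three arbitrary intersection abscissae is non-crossed as soon as no $x_{bb'}$ lies in that annulus. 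So your expected $c=4$ is unsupported.

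\emph{How the paper avoids this.} It fibres in the opposite order. Lemma \ref{lemma: step 1 of point line incidence bound} isolates $x_2$ over $x_1$ using at most three lines $c$ and one residue value $\alpha$. Then, for each \emph{single} line $b$, the condition that $b$ misses this fibre cell is a linear valuative or linear residue formula $\theta_i(x_1;b,c,\alpha)$. Its balls are centred at intersection abscissae involving $b$ and the lines of $c$, with slope-dependent radii, and all conflicts are single-line. Lemmas \ref{lem: linear swiss cheese} and \ref{lemma: linear residue images} give DCDs of exponent $2$, respectively $3$, in $b$ (for $\theta_2$ and $\theta_3$). Adding the $3$, respectively $2$, lines in $c$ gives exponent $5$ (Proposition \ref{proposition: DCD for point line}).

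\emph{Exponent bookkeeping.} The formula you quote has $t(d-1)=td-t$ in both exponents, so at $t=5$ it gives $m^{5/9}n^{5/9}$. The correct formula is $m^{\frac{(t-1)d}{td-1}}n^{\frac{t(d-1)}{td-1}}$, which agrees with your own optimisation $m^{\frac{2c-2}{2c-1}}n^{\frac{c}{2c-1}}$. Thus $8/9,5/9$ is what $c=5$ gives; any $c\le 5$ suffices, but $c=4$ would produce $6/7,4/7$, not the exponents you state.

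\emph{Counting step.} The relevant fact is the reverse of what you wrote: a cell with at least two points lies on at most one line. Cells containing a single point, which your decomposition produces wherever several lines meet, can lie on many lines. They need a separate K\H{o}v\'ari--S\'os--Tur\'an estimate, which Theorem \ref{thm: distal zarankiewicz} already packages.
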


As above, the theorem holds when $\mathbb F_q(t)$ is replaced by an arbitrary field admitting a valuation with residue field $\mathbb F_q$, so in particular any finitely generated extension of a finite field.
We remark again that Theorem \ref{thm: intro 2} does not hold over arbitrary fields of positive characteristic and that the dependence of the constant factor on $q$ cannot be removed, by the same example as above.

For point-line incidence, non-trivial bounds in finite fields were established by Bourgain, Katz, and Tao \cite{bourgain2006sumproductestimatefinitefields}, followed by a series of extensions and improvements \cite{vinh2007szemereditrottertypetheoremsumproduct, Grosu_2014, Helfgott_2010,JONES2016136}. A common feature of these results is that they assume the set of points and lines to be either very small or very large with respect to the characteristic. Moreover, they do not say anything about infinite fields of positive characteristic. 

One motivation for studying geometric incidence bounds comes from their close connection to sum-product phenomena: Given a finite set $A\subseteq \mathbb R$, either $A+A$ or $A\cdot A$ must be large\footnote{i.e. $\geq |A|^{1+\epsilon}$}. In other words, finite sets of reals cannot be close to being both additively and multiplicatively closed. The first proof of this, given by Elekes \cite{GyorgyElekes1997}, is a direct application of Fact \ref{fact: sz tr}. Using this idea, one could derive from Theorem \ref{thm: intro 2} a sum-product bound for function fields of finite fields, recovering a quantitatively weaker version of a result of Bloom and Jones \cite{bloom2013sumproducttheoremfunctionfields}. 
In fact, one should be able to derive more general results in the spirit of Elekes-Rónyai \cite{ELEKES20001} and of Elekes-Szabó \cite{elekes2012find} for function fields of finite characteristic, as is done in \cite[Theorem 7.1]{bays_martin}. \\

Our methods are directly inspired by those in \cite{bays_martin}. 
In particular, the key to proving Theorem \ref{thm: intro 1} is a result due to Chernikov, Peterzil and Starchenko \cite{chernikov2023modeltheoreticelekesszabostableominimal}, which establishes Szemerédi-Trotter-type bounds for relations defined in so-called \emph{distal} first-order structures. 
That result relies, in turn, on the `cutting lemma' for such relations, due to Chernikov, Galvin, and Starchenko \cite{chernikov2020cuttinglemmazarankiewiczsproblem}.

The class of distal structures was first studied by Belegradek, Stolboushkin, and Taitslin \cite{belegradek1999extended} under the name \emph{isolation property}.
Distal structures, which include the $p$-adics, Presburger arithmetic, and all o-minimal structures,  were then defined independently by Simon \cite{Simon_2013}, with the aim of isolating the class of purely unstable NIP structures. 
This classification of NIP structures is not exhaustive: A typical example of an NIP theory that is neither distal nor stable is the theory of algebraically closed valued fields, denoted $\ACVF$.
Models of $\ACVF$ contain a stable part in the form of their residue field and an o-minimal part in the form of the value group. 
A common slogan is that $\ACVF$ is dominated by these two parts. 
This leads to the intuitive idea that algebraically closed valued fields should be in some sense distal `up to their residue field'. \\
The second key to proving Theorem \ref{thm: intro 1} is to make this intuition concrete. 
We define in Section \ref{sec: relative compbility and distality} a notion of {distality} of a structure (or theory) \emph{relative to a definable set}.
We leave a further development of relative distality, including further examples, to future work.
Our definition is in analogy to the combinatorial characterization of distality due to Chernikov and Simon \cite{chernikov2012externallydefinablesetsdependent}, in terms of so-called \emph{distal cell decompositions}, as introduced in \cite{chernikov2020cuttinglemmazarankiewiczsproblem}.
We show in Section \ref{sec: ACVF} that $\ACVF$ is distal relative to the residue field, in the sense defined in Section \ref{sec: relative compbility and distality}. 
Using the fact that any finitely generated field extension of $\mathbb F_p$ admits a valuation with finite residue field, this turns out to be enough to apply the distal bounds of Chernikov, Peterzil and Starchenko, in order to derive the incidence bounds in Theorem \ref{thm: intro 1}.

\subsection{Overview}

We introduce in Section \ref{chapter: NIP and distality} the combinatorial characterization of distality, and state the distal incidence bounds from \cite{chernikov2023modeltheoreticelekesszabostableominimal}, in a way that enables us to apply them in the context of relative distality.
In Section \ref{sec: relative compbility and distality}, we introduce distality relative-to-$D$ as a local property of a given formula $\varphi(x,y)$ in a structure, via a notion of {uniform strong honest definition} relative-to-$D$, which we call \emph{$D$-still}. 
We prove various useful properties of relative distality, which are used later.
We observe that the generic colored order from \cite{Simon_2013} is distal relative to the color classes, and that an ordered vector space $V$ with a predicate for a dense proper subspace $P$ is distal relative to the quotient $V/P$.
For the sake of brevity, we leave the further study of examples and properties of relative distality -- which we believe to be of separate model-theoretic interest -- to upcoming work.
Crucially, we prove in Lemma \ref{lem: relative distality in one variable enough} that existence of $D$-stills propagates from unary partitioned formulas $\varphi(x;y)$ (i.e. where $|x|=1$) to all formulas.
We prove distality of $\ACVF$ relative to the residue field in Section \ref{sec: ACVF}, relying on Robinson's quantifier elimination \cite{robinson1956complete}.
We give a description of unary quantifier-free definable sets (Lemma \ref{lemma: swiss cheeses}), refining \cite[Lemma 5.2]{bays_martin}, which allows us to construct explicit relative USHDs for unary quantifier-free formulas.
Finally, in Section \ref{sec: applications}, we prove Theorem \ref{thm: intro 1} and Theorem \ref{thm: intro 2} as an application.
To prove the quantitative bound given by Theorem \ref{thm: intro 2}, we give a careful account of the more general proof, exploiting the fact that the application of Lemma \ref{lem: relative distality in one variable enough} is straightforward for the case of the formula $x_2=x_1y_1+y_2$ defining point-line incidence.
The only obstruction to making the general incidence bounds explicit in this way is to bound how the description complexity of a relation is affected by taking projections; in other words, how adding a quantifier and eliminating it in $\ACVF$ affects the complexity of the formula.

\section*{Acknowledgements}

We thank Martin Bays for suggesting this project, for many helpful conversations, and his comments on a previous version of this manuscript.
We thank Udi Hrushovski for several useful conversations on relative distality.

\section*{AI Statement}

GPT-5.5 was used to proofread a previous version of this manuscript in July 2026, where it pointed out several typos and minor errors.
In addition, the model pointed out an error in step 2 of the proof strategy (as described in the paragraph above Lemma \ref{lemma: step 1 of point line incidence bound}) for Theorem \ref{thm: point line explicit bound}. 
Specifically, the formula $\theta_4$ was not suitably defined, and an additional input variable of the residue field sort was needed. 
This led the author to spot a more substantial gap in the proof of  Theorem \ref{thm: ACVF distal relative to k}, which was remedied -- without AI input -- by switching to a three-sorted language of valued fields, rather than a one-sorted one.\\
In August 2026, the author prompted GPT-5.5 to perform an explicit quantifier-elimination for the theory of algebraically closed valued fields, in order to simplify the proof of Proposition \ref{proposition: DCD for point line}.
The model responded by reminding the author that the statement ``If $x$ has property $P$ then $x$ is not equal to $y$" is logically equivalent to the statement ``$y$ does not have property $P$", which trivially concluded the task.\\
There was no further AI input to this manuscript. In particular, the writing was exclusively produced by the author.

\section{Background}\label{chapter: NIP and distality}

\subsection{Notation and basic model theory}

We assume knowledge of some basic model-theoretic concepts, for which we refer the reader to \cite{marker2006model}.

Let $L$ denote a first-order language, which is possibly many-sorted, and
let $M$ be an $L$-structure.
We write $\Th(M)$ for the $L$-theory of $M$.
For a given (possibly incomplete) $L$-theory $T$, we write $M\models T$ if $T\subseteq \Th(M)$.
We refer to (multi-)variables as variables, and use letters $x,y,z,u,v,w$ etc. to denote variables of arbitrary finite arity.
We write both $xy$ and $(x,y)$ to denote the concatenation of $x$ and $y$.
When $x$ is a variable, we write $M^x$ to denote tuples in $M$ of the same arity as $x$.
Note that here, the \emph{arity} of $x$ specifies both the length of $x$ and the sorts of the unary sub-variables of $x$.

When $A$ is a (possibly infinite) tuple or a subset of $M$, we write $L(A)$ to denote the expansion of $L$ by a constant for each element of $A$.
An $L$-formula is a formula \emph{without parameters}.
We say formula to refer to an $L$-formula when the ambient language is clear.
Similarly, a {definable set} is a set defined by a formula without parameters.
We write $\varphi(x)$ to indicate that the free variables of $\varphi$ are contained in $x$.
To distinguish from the logical symbol $=$, we write $\equiv$ for equality of formulas.
When $\varphi(x)$ is an $L(M)$-formula, we write $\varphi^M$ to denote the interpretation of $\varphi$ in $M$ -- that is, the subset of $M^x$ defined by $\varphi$.
For $a\in M^x$, we write $M\models\varphi(a)$ if $a\in\varphi^M$.
When the ambient structure $M$ is clear from the context, we simply write $\models\varphi(a)$.
If $A$ is a subset of $M$, we write $\varphi^M(A)$ to denote the restriction $\varphi^M\cap A^x$.
A tuple $a$ from (one of the sorts of) $M$ is \emph{definable} over a set $C\subseteq M$ if there is an $L(C)$-formula $\chi$ with $\chi^M=\{a\}$.
We write $\dcl(C)$ for the set of all such points $a$, and call this the \emph{definable closure} of $C$ in $M$.

A (binary) \emph{partitioned formula} is a formula of the form $\varphi(x,y)$ where $x$ and $y$ are disjoint variables, which we view as two separate parts of the variable-input of $\varphi$.
We often use a semi-colon to emphasize the partition of the variables, as in $\varphi(x;y)$ or $\psi(x;y;z)$. 
Given a partitioned formula $\varphi(x;y)$, a tuple $a\in M^x$ and a set $B\subseteq M^y$, we write $\tp_\varphi(a/B)$ for the $\varphi$-\emph{type} of $a$ over $B$, that is the set
\[\{\varphi(x;b):b\in B,\ M\models\varphi(a,b)\}\cup\{\neg\varphi(x;b):b\in B,\ M\models\neg\varphi(a,b)\}.\]

When $A\subseteq M^x$, we write $\varphi^M(A,B)$ to denote the restriction of $\varphi^M$ to $A\times B$. 

When $E\subseteq X\times Y$ is a binary relation on sets $X$ and $Y$, and $A\subseteq X$ and $B\subseteq Y$, we write $E(A,B)$ for the restriction of $E$ to $A\times B$.
When $a\in X$ is a point, we write $E(a,B)$ for $E(\{a\},B)\subseteq\{a\}\times B$, which we identify with its projection to $B$.
We sometimes write $E_a$ or $E(a)$ for $E(a,Y)$ and call this the \textit{fiber} of $E$ above $a$.
By abuse of notation, we also write $E_b$ or $E(b)$ to denote $E(X,b)$ when $b\in Y$, and call this the fiber of $E$ above $b$.
The intended meaning will be clear from the context.

We write $A\fin X$ if $A$ is a \emph{finite} subset of $X$. 
For a cardinal $\kappa$, we write $A\subseteq_\kappa X$ if $A\subseteq X$ with $|A|=\kappa$. 
We use $[n]$ to denote $\{1,2,\dots,n\}$.

\subsection{Distal cell decompositions}

The notion of distality was defined originally in terms of indiscernibles in \cite{Simon_2013} and rephrased in terms of global types in \cite[Chapter 9]{NIP_guide}. Chernikov and Simon give in \cite[Theorem 21]{chernikov2012externallydefinablesetsdependent} a characterization of distality as a ``local''\footnote{Local in that it is characterized by a property of each formula separately, but non-local in that this property depends on the rest of the structure.} combinatorial property.
We present this characterization in terms of \emph{distal cell decompositions}, as introduced in \cite{chernikov2020cuttinglemmazarankiewiczsproblem}. 

\begin{definition}[\cite{chernikov2020cuttinglemmazarankiewiczsproblem}]\label{def: dcd}
Let $\mathcal{E}$ be a family of relations $E\subseteq X\times Y$.
A \emph{cell decomposition} for $\mathcal{E}$ is a map $\mathcal{T}$, which assigns to each finite set $B\subseteq Y$ a collection $\mathcal{T}(B)\subseteq \mathcal{P}(X)$ of \emph{cells}, such that for every $B\fin Y$ with $|B|\geq 2$,
\begin{itemize}
    \item for every cell $\Delta\in\mathcal{T}(B)$ and $b\in B$ and $E\in\mathcal{E}$, either $\Delta\subseteq E_b$ or $\Delta\cap E_b=\emptyset$;
    \item $X$ is contained in $\bigcup \mathcal{T}(B)$.
\end{itemize}
For $t\in\mathbb N$ and $C>0$, we say that $\mathcal{T}$ has \emph{exponent} $\leq t$ with constant factor $C>0$ if for every finite $B\subseteq Y$, we have
\[\big|\mathcal{T}(B)\big|\leq C|B|^t.\]

We say that $\mathcal{T}$ is a \emph{distal} cell decomposition (DCD) if there exists a graph $R\subseteq X\times Y^s$ and a map $I:Y^s\to \mathcal{P}(Y)$  such that for every finite set $B\subseteq Y$,
\[\mathcal{T}(B)=\{R_c: c\in B^s\wedge I(c)\cap B=\emptyset\}.\]
In this case, we say that $R$ \emph{induces} $\mathcal{T}$.
\end{definition}

For a single graph $E$, we say that $\mathcal{T}$ is a DCD for $E$ when it is a DCD for $\{E\}$, in the sense of Definition \ref{def: dcd}.

\begin{remark}\label{remark: coding piecewise DCDs}
    The assumption that $|B|\geq 2$ in Definition \ref{def: dcd} allows for the following ``coding trick''.
    Given a family $\mathcal{E}$ as in Definition \ref{def: dcd} and a finite family $(R_i)_{i\leq k}$ of graphs $R_i\subseteq X\times Y^{t_i}$, suppose there is a cell decomposition for $\mathcal{E}$ whose cells are all of the form $(R_i)_c$ for some $i$ and $c\in Y^{t_i}$.
    Then $\mathcal{E}$ has a DCD.
    Namely, for $t:=\max_i t_i$, define a graph
    $R\subseteq X\times Y^{k+t+1}$ where $(a,c,d_0,\dots,d_{k})\in R$ whenever there is some $i\leq k$ with $d_0=d_i\neq d_j$ for each $j\neq i$ and $(a,c)\in R_i$.
    Now $R$ induces a DCD for $\mathcal{E}$, of exponent $\leq t$ with constant factor $k$.
    We may call this the DCD induced by the family $(R_i)_{i\leq k}$.
\end{remark}

We can now give an account of distality, using a characterization in terms of distal cell decompositions, essentially from \cite{chernikov2012externallydefinablesetsdependent}.

\begin{definition}\label{def: USHD}
   ${M}$ is \emph{distal} if for every $L$-formula $\varphi(x;y)$ there is some $L$-formula $\psi(x;z)$ such that $z$ is a concatenation of disjoint copies of $y$ and $\psi^M\subseteq M^{x}\times (M^{y})^{{|z|}/{|y|}}$ induces a DCD for $\varphi^{M}\subseteq M^{x}\times M^{y}$.
\end{definition}

It is worth noting that the property that $\psi^M$ induces a DCD for $\varphi^M$ is elementary in $M$; it holds if and only if $M$ is a model of a countable collection of $L$-sentences
\[\forall y_1\dots y_n\forall x\bigvee_{z\in\{y_1,\dots,y_n\}^t}\Big(\psi(x,z)\wedge\forall x'\big(\psi(x',z)\rightarrow \bigwedge_{i=1}^n\varphi(x',y_i)\leftrightarrow\varphi(x,y_i)\big)\Big)\]
with $n$ ranging over $\mathbb N$. 
Thus, we call a complete theory $T$ distal whenever some (each) model of $T$ is distal.

\begin{examples}
    \begin{itemize}
        \item The theory $\Th(\mathbb N,=)$ of infinite sets is not distal. 
        More generally, any stable theory with infinite models is non-distal, as witnessed by the formula $\varphi(x;y):\equiv x=y$.
        \item  The theory $\Th(\mathbb Q,<)$ of dense linear orders is distal by quantifier elimination.
        \item Presburger arithmetic $\Th(\mathbb Z,+,0,<)$ is distal by quantifier elimination in the expanded language $(+,0,<,(\equiv_m)_{m\in\mathbb N})$, where $\equiv_m$ is equality modulo $m$.
        \item The theory $\Th(\mathbb R,+,\cdot,0,1,<)$ of real closed fields is distal. More generally, any o-minimal theory is distal.
    \end{itemize}
\end{examples}

Chernikov, Peterzil and Starchenko \cite{chernikov2023modeltheoreticelekesszabostableominimal} show that graphs with DCDs admit non-trivial Zarankiewicz bounds, in the following sense.

\begin{theorem}\cite[Theorem 2.8]{chernikov2023modeltheoreticelekesszabostableominimal}\label{thm: distal zarankiewicz}
    Suppose $E\subseteq X\times Y$ has a DCD of exponent $t$ with constant factor $C>0$.
    Then for any $A\fin X$ and $B\fin Y$ and $d,s\in\mathbb N$ such that $E(A,B)$ is $K_{d,s}$-free,  
    \begin{equation}\label{eq: distal zarankiewicz bound}
        |E(A,B)|= O_{d,s,C,t}\big( |A|^{1-\epsilon}|B|^{1-\frac{1-\epsilon}{d}}+|A|+|B|\big)
    \end{equation}
    where $\epsilon=\frac{d-1}{td-1}$.
\end{theorem}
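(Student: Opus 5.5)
Since Theorem \ref{thm: distal zarankiewicz} is quoted from \cite{chernikov2023modeltheoreticelekesszabostableominimal}, I would follow the cutting-lemma route of \cite{chernikov2020cuttinglemmazarankiewiczsproblem}. The argument has three steps.

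\textbf{Step 1: a cutting lemma.} From the DCD $\mathcal{T}$ of exponent $t$ and constant $C$, I would first derive the following statement. For any $B\fin Y$ and any $r\le |B|$, there is a covering of $X$ by $O_{C,t}(r^t)$ sets $X_1,\dots,X_m$ such that each $X_i$ is crossed by at most $|B|/r$ of the fibers $E_b$, $b\in B$. Here ``crossed'' means $X_i$ meets both $E_b$ and its complement.

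The proof of this is the standard Clarkson--Shor random sampling argument. Take a random sample $S\subseteq B$ of size about $r$ and form $\mathcal{T}(S)$. Because each cell is of the form $R_c$ with $c\in S^t$ and $I(c)\cap S=\emptyset$, the Clarkson--Shor bound shows the expected number of cells crossed by more than $k|B|/r$ elements is exponentially small in $k$. One then refines the heavy cells by applying $\mathcal{T}$ again to their conflict lists. This is exactly where the distality structure is needed, namely that each cell is determined by $t$ defining elements and a conflict set. The refinement keeps the total number of pieces at $O(r^t)$. I expect this step to be the main obstacle, because the weighted refinement has to be done carefully to avoid losing a $\log r$ factor.

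\textbf{Step 2: counting edges cell by cell.} Apply the cutting lemma to $B$ with parameter $r$ and intersect the cells with $A$. We may assume each point of $A$ is counted in only one cell, so $\sum_i |A_i|=|A|$, where $A_i=A\cap X_i$. Fix $i$. The fibers $E_b$ with $b\in B$ that contain $X_i$ entirely contribute at most $(d-1)|B|$ edges altogether once $|A_i|\ge d$, since otherwise we would find a $K_{d,s}$ from $d$ points of $A_i$ and $s$ such $b$'s; this costs an extra $O(s)$ per cell. The fibers that merely cross $X_i$ number at most $|B|/r$. For these, Fact \ref{fact: Kövari-Sos-Turan}, applied with the roles arranged so that the $d$-side is $A_i$, gives $O(|B|/r\cdot|A_i|^{1-1/d}\cdot\ldots)$. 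More precisely it gives the bound $O\bigl(|A_i|(|B|/r)^{1-1/d}+|B|/r\bigr)$ with the appropriate orientation.

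Summing over cells, the total is
\[
O\bigl(|A|(|B|/r)^{1-1/d}+r^{t-1}|B|+r^t s\bigr).
\]

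\textbf{Step 3: optimising $r$.} Balance the first two terms by choosing $r$ so that $|A|\,r^{-(1-1/d)}\approx r^{t-1}|B|^{1/d}$, that is $r^{t-1/d}\approx |A|/|B|^{1/d}$. Raising to the power $d$, this is $r^{td-1}\approx |A|^d/|B|$, so $r\approx (|A|^d/|B|)^{1/(td-1)}$.

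Substituting gives $|E(A,B)|=O(|A|^{1-\epsilon}|B|^{1-\frac{1-\epsilon}{d}})$ with $\epsilon=\frac{d-1}{td-1}$. A quick exponent check confirms this value of $\epsilon$.

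The boundary regimes are handled separately. If $r<1$ or $r>|B|$, the trivial bounds apply: Fact \ref{fact: Kövari-Sos-Turan}, or the choice $r=|B|$, which makes every cell homogeneous and yields $O(|A|+|B|)$. These cases produce the additive terms $|A|+|B|$.
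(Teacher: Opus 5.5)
Your route (the distal cutting lemma of \cite{chernikov2020cuttinglemmazarankiewiczsproblem}, K\H{o}v\'ari--S\'os--Tur\'an inside each cell, then $r=(|A|^d/|B|)^{1/(td-1)}$) uses the first of the ingredients the paper lists in Remark~\ref{remark: polynomial constant}. Steps 1 and 3 are fine as a plan; Step 1 can simply be cited. The gap is in Step 2, in the edges between $A_i$ and the fibers $E_b$ that contain the whole cell $X_i$, where you have the two cases crossed. If $|A_i|\ge d$, then $K_{d,s}$-freeness leaves fewer than $s$ such $b$. These edges therefore number at most $(s-1)|A_i|$, which sums harmlessly to $(s-1)|A|$. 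If $|A_i|<d$, however, nothing bounds the number of $b\in B$ with $X_i\subseteq E_b$. Such a cell can carry up to $(d-1)|B|$ edges, and nothing prevents $\Theta(r^t)$ of the cells from being of this kind. Your argument therefore only gives a bound of order $r^t|B|$, which is larger than the main term $r^{t-1}|B|$ by a factor $r$. The term $r^ts$ in your sum is unjustified. The same problem breaks your boundary case $r>|B|$. Homogeneous cells (e.g.\ those of $\mathcal{T}(B)$) containing fewer than $d$ points of $A$ can still carry on the order of $|B|^{t+1}$ edges in total, not $O(|A|+|B|)$.

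You need an extra ingredient for the set $A'$ of points lying in cells with fewer than $d$ points of $A$, so $|A'|=O(dr^t)$. K\H{o}v\'ari--S\'os--Tur\'an applied to $E(A',B)$ only gives $O(r^t|B|^{1-1/d}+|B|)$, which suffices only when $r\le|B|^{1/d}$. What closes the gap is the second ingredient listed in Remark~\ref{remark: polynomial constant}. The DCD shows that for every $B_0\subseteq B$ with $|B_0|\ge 2$, the traces $E(a,B_0)$, $a\in X$, take at most $C|B_0|^t$ values. So the family $\{E(a,B):a\in A'\}$ has shatter function $O(z^t)$ on $B$, and \cite[Theorem 2.1]{fox2015semialgebraicversionzarankiewiczsproblem} yields $|E(A',B)|=O(|B||A'|^{1-1/t}+|A'|)=O(r^{t-1}|B|+r^t)$. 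With your choice of $r$, one checks that $r^t\le|A|$ exactly when $r\le|B|$, so this fits the claimed bound. The same estimate applied to all of $A$ gives $O(|A|)$ directly when $|A|\ge|B|^t$, and this replaces your boundary argument. Alternatively, for $t\ge2$ you can induct on $|A|$. Additionally cap $r$ so that $r^t\le\lambda|A|$ for a small constant $\lambda$, whence $|A'|\le\delta|A|$. Applying the inductive hypothesis to $E(A',B)$ then closes, because $\delta^{1-\epsilon}<1$.
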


The result relies on the \emph{distal cutting lemma} from \cite{chernikov2020cuttinglemmazarankiewiczsproblem}.

\begin{remark}\label{remark: polynomial constant}
    A careful reading of the proof of the distal cutting lemma \cite[Theorem 3.2]{chernikov2020cuttinglemmazarankiewiczsproblem}, as well as \cite[Theorem 2.8]{chernikov2023modeltheoreticelekesszabostableominimal} and the Zarankiewicz bound for NIP-graphs from \cite[Theorem 2.1]{fox2015semialgebraicversionzarankiewiczsproblem}, shows that the constant factor in (\ref{eq: distal zarankiewicz bound}) is polynomial in $C$ and linear in $s$.
\end{remark}

\section{Relative distality}\label{sec: relative compbility and distality}

We introduce in this section a notion of distality \emph{relative to a definable set} via distal cell decompositions.
Fix throughout a countable first-order language $L$ with a distinguished definable set $D$, and an $L$-structure $M$. 
Write $D(M)$ for the interpretation of $D$ in $M$.
Throughout, we write $u_D$, $v_D$ etc. for variables made up of finitely many disjoint copies of a variable of the same arity as $D$.
When $u_D$ consists of $k$-many such copies, we write $D(M)^{|u_D|}$ for $D(M)^k$.

\begin{definition}
    Let $a\in M^x$ and let $p\subseteq \tp(a/M)$ be a partial type.
    We say that a formula $\psi(x)\in \tp(a/M)$ \emph{isolates} $p$ if $\psi(M)\subseteq p(M)$.
\end{definition}
Note that an isolating formula may contain the parameter $a$ itself.
For example, the formula $x=a$ trivially isolates $\tp(a/M)$.
Below, we will consider isolating formulas which contain an image of $a$ under a definable map into $D$ as parameters.
In what follows, $t$ is a positive integer and $z$ is a concatenation of $t$-many disjoint copies of the variable $y$.

\begin{definition}\label{def: relative distality on a set}
    Let $\varphi(x;y)$ and $\psi(x;z;u_D)$ be partitioned $L(M)$-formulas. We say that
    \begin{enumerate}
        \item[(i)] $\psi$ is a \emph{$D$-still}\footnote{The intended meaning of the word \emph{still} is an apparatus used for distillation.} for $\varphi$ if there is a definable map $f(x;z)$ into $D(M)^{|u_D|}$ such that for every subset $B\fin M^{y}$ with $|B|\geq 2$ and every $a\in {M}^{x}$, there is some $c\in B^t$ such that $\psi(x;c;f(a,c))$ isolates $\tp_\varphi(a/B)$; 
        \item[(ii)] ${M}$ is \emph{distal relative to $D$} if every partitioned $L$-formula $\varphi(x;y)$ has a $D$-still in ${M}$.  
    \end{enumerate}
    We call $t=|z|/|y|$ the \emph{arity} of the $D$-still $\psi$ for $\varphi$, and we say that $f$ \emph{witnesses} that $\psi$ is a $D$-still for $\varphi$ when (i) holds with $f$.
\end{definition}

Note that it is sensible also to make these definitions at the level of a (possibly incomplete) theory $T$ rather than a model.

\begin{definition}\label{def: relative distality for a theory}
    Let $\varphi(x;y)$ and $\psi(x;z;u_D)$ be partitioned $L$-formulas. We say that 
    \begin{enumerate}
        \item[(i)] $\psi$ is a {$D$-still} for $\varphi$ \emph{in $T$} if it is a $D$-still for $\varphi$ in every model of $T$; 
        \item[(ii)] $T$ is \emph{distal relative to $D$} if every partitioned $L$-formula $\varphi(x,y)$ has a $D$-still in $T$. 
    \end{enumerate}
\end{definition}

When the model $M$ or the theory $T$ is clear from the context, we may refer to $\psi$ as a $D$-still without specifying $M$ or $T$.
Consider the following trivial examples.

\begin{example}
\begin{itemize}
    \item[(i)]
    If $M$ has finitely many sorts and each sort has a constant, then ${M}$ is distal relative to its whole universe $M$.
    \item[(ii)] 
    $M$ is distal relative to $\emptyset$ if and only if $M$ is distal.
\end{itemize}
\end{example}

\begin{remark}\label{remark: facts about relative USHDs}
\begin{enumerate}
    \item[(i)] An $\emptyset$-still is a \emph{uniform strong honest definition} in the sense of \cite{bays_martin}.
    \item[(ii)] The assumption in Definition \ref{def: relative distality on a set} (i) that $f$ and $\psi$ share an input variable $z$ can always be achieved by adding dummy variables.
    \item[(iii)] The assumption that $|B|\geq 2$ allows us to \emph{code} several formulas into one when constructing $D$-stills. 
    Let $z^i$ be a variable consisting of finitely many disjoint copies of $y$.
    Suppose $(\psi_i(x;z^i;u^i_D))_{i\leq n}$ is a family of formulas and $(f_i(x;z^i))_{i\leq n}$ is a family of definable maps, such that for any $a\in M^x$ and $B\fin M^y$, there are $i\leq n$ and a tuple $c$ from $B$ such that $\psi_i(x;c;f(a,c))$ isolates $\tp_\varphi(a/B)$. 
    Setting $z=z^1\dots z^n$, $u_D=u_D^1\dots u_D^n$ and $s=y_0\dots y_n$ to be a concatenation of disjoint copies of $y$, the formula 
    $\psi(x;z,s;u_D):\equiv \bigvee_{i\leq n}\big(y_0=y_i\wedge \psi_i(x;z^i;u^i_D)\big)$
    is a $D$-still for $\varphi$, witnessed by $f(x;z,s):=(f_i(x;z^i))_{i\leq n}$.
    The assumption $|B|\geq 2$ allows for the correct choice of $\psi_i$ to be encoded in the ``$s$-part" of $c\in B^{zs}$.
    \item[(iv)] $\varphi$ has a D-still if and only if there is a formula $\theta(x;z)$ and a definable map $f(x;z)$ such that for $a\in M^x$ and $B\fin M^y$, there is $c\in B^t$ such that $\theta(x;c)\wedge f(x;c)=f(a;c)$ isolates $\tp_\varphi(a/B)$. 
\end{enumerate}
\end{remark}

When $(\psi_i(x;z^i;u_D^i))_i$ and $(f_i(x;z^i))_i$ are as in Remark \ref{remark: facts about relative USHDs} (iii), we refer to $(\psi_i)_i$ as a \emph{piecewise $D$-still} for $\varphi(x,y)$, witnessed by $(f_i)_i$.
It will be useful at certain points to preserve this information, rather than coding $(\psi_i)_i$ in one formula.
Write $t_i:=\frac{|z^i|}{|y|}$ and call $\max_it_i$ the \emph{arity} of the piecewise $D$-still $(\psi_i)_i$.
The motivation for this, and more generally, for Definition \ref{def: relative distality on a set}, is the following observation.

\begin{prop}\label{prop: k-stills give DCDs}
    Let $\varphi(x;y)$ be a formula with a piecewise $D$-still $(\psi_i(x;z^i,u^i_D))_{i\leq k}$ of arity $t$, witnessed by $(f_i(x;z^i))_{i\leq k}$, in $M$.
    Let $X\subseteq M^x$ and $Y\subseteq M^y$ be such that $D_0:=\bigcup_if_i(X;Y^{t_i})$ is finite.
    Then $(\psi_i^M)_i$ induces a DCD of exponent $t$ with constant factor $k|D_0|$ for $\varphi^M(X,Y)\subseteq X\times Y$.
\end{prop}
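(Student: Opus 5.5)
The plan is to reduce to the coding trick of Remark \ref{remark: coding piecewise DCDs}, by finding finitely many graphs whose fibers already form a cell decomposition of the required shape. Since $D_0$ is finite, the $D$-valued parameters $f_i(a,c)$ can only take finitely many values, and each such value can be built into the graph. Concretely, for each $i\leq k$ and each $e\in D_0\cap D(M)^{|u^i_D|}$, define
\[R_{i,e}:=\{(a,c)\in X\times Y^{t_i}: M\models\psi_i(a;c;e)\}.\]
This gives at most $k|D_0|$ graphs, each of arity $t_i\leq t$. (I will pad $c$ with dummy copies so that every graph lives in $X\times Y^{t}$; this does not change the fibers.)

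Next I check that the fibers of these graphs cover $X$ and form cells for $\varphi^M(X,Y)$. Fix $B\fin Y$ with $|B|\geq 2$ and $a\in X$. By the piecewise $D$-still property there are $i$ and $c\in B^{t_i}$ such that $\psi_i(x;c;f_i(a,c))$ isolates $\tp_\varphi(a/B)$.

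\begin{itemize}
\item The value $e:=f_i(a,c)$ lies in $f_i(X;Y^{t_i})\subseteq D_0$, so $(R_{i,e})_c$ is one of the fibers in our family.
\item The formula lies in $\tp(a/M)$, so $a\in(R_{i,e})_c$. Hence these fibers cover $X$.
\item Isolation means every $a'\in X$ in this fiber has the same $\varphi$-type over $B$ as $a$. So for each $b\in B$, the fiber is either contained in $\varphi^M(X,b)$ or disjoint from it.
\end{itemize}

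This shows that $\mathcal{T}(B):=\{(R_{i,e})_c : i,e,\ c\in B^{t_i},\ (R_{i,e})_c \text{ is contained in or disjoint from each } \varphi(X,b),\ b\in B\}$ is a cell decomposition of the form required by Remark \ref{remark: coding piecewise DCDs}.

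The main subtlety is that a DCD requires the cells to be exactly $\{R_c : c\in B^s,\ I(c)\cap B=\emptyset\}$. A fiber $(R_{i,e})_c$ with $c\in B^{t_i}$ but $e\neq f_i(a,c)$ for every $a$ in it might cut some $\varphi(X,b)$ with $b\in B$. So I must exclude the non-homogeneous fibers using the map $I$. I define $I(c)$ to be the set of $b\in Y$ such that $(R_{i,e})_c$ meets both $\varphi(X,b)$ and its complement in $X$. This condition depends only on $c$ and $b$, so $I(c)\cap B=\emptyset$ says exactly that the fiber is a cell for $B$. The same applies after the coding of Remark \ref{remark: coding piecewise DCDs}, where $I$ is extended to ignore the selector coordinates $d_0,\dots,d_{k'}$.

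Finally, I count. Every cell is a fiber of some $R_{i,e}$ at some $c\in B^{t}$, so $|\mathcal{T}(B)|\leq k|D_0|\cdot|B|^t$. Applying Remark \ref{remark: coding piecewise DCDs} to the family $(R_{i,e})$ then gives a single graph $R$ inducing a DCD of exponent $t$ with constant factor $k|D_0|$. One point needs care here: the coding uses extra selector variables, but these only raise the arity of $R$, not the bound on the number of cells. I would count the cells directly as above rather than through the arity of $R$. I expect this bookkeeping of the exponent to be the only delicate step.
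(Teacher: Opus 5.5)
Your proposal is correct and is essentially the paper's proof: the paper also sets $R_{i,d}:=\psi_i^M(X,Y;d)$ for $i\leq k$ and $d\in f_i(X;Y^{t_i})\subseteq D_0$, and then codes these at most $k|D_0|$ graphs into a single graph via Remark~\ref{remark: coding piecewise DCDs}. Your explicit choice of $I(c)$ as the set of $b$ for which the fiber meets both $\varphi^M(X,b)$ and its complement, together with your direct count of distinct cells, just spells out what the paper leaves implicit in that remark. One small detail: to get exactly $k|D_0|$ rather than $k|D_0|+1$, also let $I$ exclude the empty fibers coming from selector tuples that pick out no index.
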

\begin{proof}
    For any $i\leq k$ and $d\in f_i(X,Y^{t_i})$, define
    \[R_{i,d}:=\psi_i^M(X,Y;d)=\big\{(a,c)\in X\times Y^{t_i}:M\models \psi_i(a,c,d)\big\}.\]
    By assumption and Remark \ref{remark: coding piecewise DCDs}, we can code the family $(R_{i,d})_{i,d}$ into one graph, which induces a DCD of exponent $t$ with constant factor
    \[|\big\{(i,d):i\leq k,\ d\in D_0\big\}|\leq k|D_0|.\]
\end{proof}

Applying Theorem \ref{thm: distal zarankiewicz}, we immediately get the following incidence bound.

\begin{corollary}\label{cor: k-stills give zarankiewicz bounds}
    Let $\varphi(x;y)$ be a formula with a piecewise $D$-still of arity $t$ in $M$.
    Let $X\subseteq M^x$ and $Y\subseteq M^y$ be such that $D_0:=D(\dcl(XY))$ is finite, say $q:=|D_0|$.
    Then for any positive integer $d$, there is some $\epsilon>0$ depending only on $\Th(M)$ and $d$, such that for any positive integer $s$ and finite sets $A\subseteq X$ and $B\subseteq Y$, if $\varphi^M(A,B)$ is $K_{d,s}$-free, then  
     \[|\varphi^M(A,B)|=O_{d,s,t,q}\big(|A|^{1-\epsilon}|B|^{1-\frac{1-\epsilon}{d}}+|A|+|B|\big).\]
\end{corollary}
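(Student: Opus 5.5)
The plan is to reduce the statement to Theorem \ref{thm: distal zarankiewicz} via Proposition \ref{prop: k-stills give DCDs}. Let $(\psi_i(x;z^i;u^i_D))_{i\leq k}$ be the piecewise $D$-still of arity $t$ for $\varphi$, witnessed by definable maps $(f_i(x;z^i))_{i\leq k}$. Before anything else, we need the set $\bigcup_i f_i(X;Y^{t_i})$ to be finite, since that is the hypothesis of Proposition \ref{prop: k-stills give DCDs}. For $a\in X$ and $c\in Y^{t_i}$, the value $f_i(a,c)$ is a tuple of elements of $D(M)$. It is definable over $ac$, because $f_i$ is a definable map without parameters. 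Hence each coordinate of $f_i(a,c)$ lies in $D(\dcl(XY))=D_0$. It follows that
\[\bigcup_i f_i(X;Y^{t_i})\subseteq \bigcup_i D_0^{|u^i_D|},\]
and this set is finite, of size at most $k\,q^{m}$, where $m:=\max_i |u^i_D|$.

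We then apply Proposition \ref{prop: k-stills give DCDs}. It gives that $(\psi_i^M)_i$ induces a DCD for $\varphi^M(X,Y)\subseteq X\times Y$ of exponent $t$, with constant factor $C\leq k\cdot k q^m$. The numbers $k$ and $m$ are determined by the fixed still, so $C$ depends only on the still and on $q$. The proposition itself is stated with the weaker finiteness hypothesis, so the bound on $C$ is exactly the count above.

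Next we feed this DCD into Theorem \ref{thm: distal zarankiewicz}, taking $E=\varphi^M(X,Y)$. For finite $A\subseteq X$ and $B\subseteq Y$ we have $E(A,B)=\varphi^M(A,B)$. So whenever this relation is $K_{d,s}$-free, the theorem gives
\[|\varphi^M(A,B)|=O_{d,s,C,t}\big(|A|^{1-\epsilon}|B|^{1-\frac{1-\epsilon}{d}}+|A|+|B|\big),\qquad \epsilon=\tfrac{d-1}{td-1}.\]
Here $\epsilon$ depends only on $d$ and $t$. The arity $t$ comes from the chosen still for $\varphi$, which we fix once and for all, so $\epsilon$ depends only on $\Th(M)$ (through $\varphi$) and on $d$. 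Finally, since $C$ is bounded in terms of $q$ and the fixed data, the constant $O_{d,s,C,t}$ becomes $O_{d,s,t,q}$.

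The step I expect to need the most care is the definability point in the first paragraph. The $f_i$ must be $\emptyset$-definable, or at least definable over parameters whose $D$-part is already counted in $D_0$. If the still is an $L(M)$-formula with parameters $e$, then $f_i(a,c)\in\dcl(ace)$ rather than $\dcl(ac)$. In that case I would either absorb $e$ into $XY$, or assume, as the phrase ``depending only on $\Th(M)$'' suggests, that $\varphi$ and the still are $L$-formulas. The degenerate case $d=1$ is also allowed: there $\epsilon=0$ and the bound is trivial, so nothing further is needed.
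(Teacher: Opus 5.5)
Your proposal is correct and follows the paper's proof, which is the one-line combination of Proposition \ref{prop: k-stills give DCDs} with Theorem \ref{thm: distal zarankiewicz}. You also spell out the finiteness step the paper leaves implicit: each $f_i(X;Y^{t_i})$ lies in $D_0^{|u^i_D|}$ because $f_i(a,c)\in\dcl(ac)$, so the proposition applies with constant factor at most $k^2q^{m}$.

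Your caveat about parameters is apt. In the paper's application (Theorem \ref{thm: incidence bounds valued fields}) the still has parameters from $F$. These lie in $\dcl(XY)$ since $X=F^x$ and $Y=F^y$, so the argument goes through there.
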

\begin{proof}
    This follows from an application of Theorem \ref{thm: distal zarankiewicz} to Proposition \ref{prop: k-stills give DCDs}.
\end{proof}

\begin{remark}\label{remark: poly dependence relative distality bound}
    By Remark \ref{remark: polynomial constant}, the constant factor in the above bound is polynomial in $q$.
\end{remark}

We show now that if $M$ is distal relative to $D$, then formulas with parameters also have $D$-stills, and these are uniform for definable families of formulas $(\varphi(x,y;b))_{b\in M}$.
Recall that whenever $\psi(x;z;u_D)$ is a $D$-still for a formula $\varphi(x;y)$, then in particular $z$ is a concatenation of disjoint copies of $y$.

\begin{lemma}\label{lem: relative USHDs with parameters}
    Suppose $\psi(x,{z},u_D)$ is a $D$-still for $\varphi(x;y,y')$, witnessed by $f(x,z)$.
    For each $b'\in M^{y'}$, let $\psi'_{b'}(x,z',u_D)$ denote the formula obtained by replacing each disjoint copy of $y'$ in $z$ by $b'$ in the formula $\psi$.
    Define $f'_{b'}(x,z')$ similarly.
    Then $\psi'_{b'}(x,z',u_D) $ is a $D$-still for the $L(b')$-formula $\varphi_{b'}(x,y):\equiv \varphi(x,y,b')$.
\end{lemma}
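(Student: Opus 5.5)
The plan is to reduce directly to the $D$-still property of $\psi$ for $\varphi(x;y,y')$. We apply it to the finite set obtained by appending the fixed parameter $b'$ to every element of $B$.

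Fix $b'\in M^{y'}$, a finite $B\fin M^{y}$ with $|B|\geq 2$, and $a\in M^x$. Set
\[\tilde B:=\{(b,b'):b\in B\}\fin M^{yy'}.\]
The map $b\mapsto (b,b')$ is a bijection, so $|\tilde B|=|B|\geq 2$. By hypothesis, there is $\tilde c\in\tilde B^t$ such that $\psi(x;\tilde c;f(a,\tilde c))$ isolates $\tp_\varphi(a/\tilde B)$.

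Write $\tilde c=((c_1,b'),\dots,(c_t,b'))$ and set $c:=(c_1,\dots,c_t)\in B^t$, a tuple for $z'$. By the definition of $\psi'_{b'}$ and $f'_{b'}$, which substitute $b'$ for each copy of $y'$ in $z$, we have $f'_{b'}(a,c)=f(a,\tilde c)$. The formula $\psi'_{b'}(x;c;f'_{b'}(a,c))$ is then literally the same $L(M)$-formula as $\psi(x;\tilde c;f(a,\tilde c))$.

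It remains to compare the types. For $b\in B$, we have $\varphi_{b'}(x,b)\equiv\varphi(x;b,b')$. Hence $\tp_{\varphi_{b'}}(a/B)$ and $\tp_\varphi(a/\tilde B)$ are the same set of formulas, and so define the same set in $M$. Isolation of one is therefore isolation of the other. Moreover, $f'_{b'}$ is a definable map (over $b'$) into $D(M)^{|u_D|}$, as Definition~\ref{def: relative distality on a set} allows $L(M)$-formulas. So $f'_{b'}$ witnesses that $\psi'_{b'}$ is a $D$-still for $\varphi_{b'}$, of the same arity $t$.

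There is no real obstacle here. The only step needing care is the bookkeeping that the substitution defining $\psi'_{b'}$ and $f'_{b'}$ is compatible with evaluating at $\tilde c$, i.e.\ that the variables $z$ decompose as $t$ copies of $yy'$, with $z'$ made up of the $y$-parts. This holds because $z$ is a concatenation of disjoint copies of $yy'$.
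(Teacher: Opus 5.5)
Your proof is correct and follows essentially the same route as the paper, whose entire proof is the observation that the parameter $c$ in Definition~\ref{def: relative distality on a set}~(i) is a concatenation of elements of the given finite set. Applying the hypothesis to $B\times\{b'\}$, as you do, therefore produces a tuple whose $y'$-parts all equal $b'$, and you have written out the bookkeeping that the paper leaves implicit.
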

\begin{proof}
    This follows from the fact that the parameter $c$ appearing in Definition \ref{def: relative distality on a set} (i) is a concatenation of parameters from $B$.
\end{proof}

The following lemma reduces the task of proving relative distality of a structure to proving it for all \emph{unary} formulas, as is the case for NIP and distality.

\begin{lemma}\label{lem: relative distality in one variable enough}
    Suppose each $L$-formula $\varphi(x,y)$ with $|x|=1$ has a $D$-still in $M$.
    Then $M$ is distal relative to $D$.
\end{lemma}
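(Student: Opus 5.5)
Induction on $|x|$, following the classical argument that distality (existence of uniform strong honest definitions) reduces to one variable, e.g. \cite[Chapter 9]{NIP_guide} and the USHD version in \cite{bays_martin}. The base case $|x|=1$ is the hypothesis. For the inductive step, write $x=x_1x_2$ with $|x_1|=1$, and let $\varphi(x_1,x_2;y)$ be given. Fix $a=(a_1,a_2)$ and $B\fin M^y$ with $|B|\geq 2$.

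First I treat $a_1$ as a parameter. By induction, the formula $\varphi(x_2;x_1,y)$ (with $x_1$ moved to the parameter side) has a $D$-still $\psi_2(x_2;z_2;u^2_D)$ witnessed by $f_2$. By Lemma \ref{lem: relative USHDs with parameters}, substituting $a_1$ for the copies of $x_1$ gives a $D$-still for $\varphi(x_2;a_1,y)$. So there is $c_2\in B^{t_2}$ such that
\[\psi_2(x_2;a_1,c_2;f_2(a_2;a_1,c_2))\]
isolates $\tp_\varphi(a_2/a_1B)$.

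Next I need to pin down $a_1$. Consider the unary-in-$x_1$ formula
\[\chi(x_1;z_2,u^2_D,y):\equiv \forall x_2\big(\psi_2(x_2;x_1,z_2;u^2_D)\to\varphi(x_1,x_2;y)\big),\]
together with its variant using $\neg\varphi$ in place of $\varphi$. Let $\varphi_1$ be the Boolean combination coding both. This is an $L$-formula with $|x_1|=1$, so by hypothesis it has a $D$-still $\psi_1$, witnessed by $f_1$.

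The main obstacle is the parameter set. The parameters for $\chi$ range over $B^{t_2}\times D(M)^{|u_D|}\times B$, which is infinite in the $D$-coordinates. However, we only need to control the single value $e:=f_2(a_2;a_1,c_2)$. I therefore move $u^2_D$ to the $x$-side: apply the hypothesis to the formula in variables $(x_1u^2_D;\,z_2y)$. This is not unary, so instead I handle it by the following trick. Treat $e$ as a parameter, using Lemma \ref{lem: relative USHDs with parameters} with the $D$-still of $\chi(x_1;z_2,y,u^2_D)$ in which $u^2_D$ lies on the parameter side. That lemma yields a $D$-still for $\chi_e(x_1;z_2y)$ whose parameter tuple involves $e$ only as a parameter. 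Concretely, take $B'=B^{t_2}\times B$ (finite) and obtain $c_1$ from $B^{t_2}\times B$, hence a tuple from $B$, together with a $D$-value $f_1(a_1;c_1,e)$. Then
\[\psi_1(x_1;c_1,e;f_1(a_1;c_1,e))\]
isolates $\tp_\chi(a_1/B'e)$.

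Finally I combine the two stills:
\[\psi(x_1x_2;c_1c_2;u,u',u'')\equiv \psi_1(x_1;c_1,u'';u)\wedge\psi_2(x_2;x_1,c_2;u'')\wedge u''=f_2(x_2;x_1,c_2),\]
witnessed by $f=(f_1(a_1;c_1,f_2(a;c_2)),\,f_2(a;c_2))$, which is a definable map into a power of $D$. It remains to verify isolation. Suppose $a'$ satisfies $\psi$. Then $a'_1$ has the same $\chi_e$-type over $B'$ as $a_1$. Because $\psi_2(x_2;a_1,c_2;e)$ forces $\varphi(a_1,x_2;b)$ or its negation according to $a$, the formula $\chi_e(x_1;c_2,b)$ holds of $a_1$, hence of $a'_1$. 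Since $\psi_2(a'_2;a'_1,c_2;e)$ also holds, we get $\varphi(a'_1,a'_2;b)\leftrightarrow\varphi(a;b)$ for every $b\in B$.

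The delicate step is the correct book-keeping of the $D$-parameter $e$ as a parameter in the second application, and checking that all tuples come from $B$ (padding with dummy copies and using Remark \ref{remark: facts about relative USHDs} (ii)–(iii) where needed).
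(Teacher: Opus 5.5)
Your proposal is correct and is essentially the paper's proof: the paper also applies the inductive hypothesis to the $N$-variable part with the unary variable treated as a parameter. It then applies the unary hypothesis together with Lemma \ref{lem: relative USHDs with parameters} to $\forall x\big(\psi\rightarrow\varphi^{\pm}\big)$, with the $D$-variable placed on the parameter side, and conjoins the resulting formulas. The differences are cosmetic. The paper handles the $\varphi$ and $\neg\varphi$ versions with two separate stills, and your ``Boolean combination coding both'' should be done the same way or via the $|B|\geq 2$ coding trick, because one formula in the same parameters cannot record both truth values. Your extra conjunct $u''=f_2(x_2;x_1,c_2)$ is harmless but unnecessary.
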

\begin{proof}
    We essentially repeat the proof of \cite[Proposition 1.9]{aschenbrenner2022distalityvaluedfieldsrelated}. 
    The proof is by induction. 
    Let $\varphi(x,x';y)$ be an $L$-formula with $|x|=N$ and $|x'|=1$. 
    By the inductive hypothesis, the formula $\xi(x;y,x'):\equiv \varphi(x,x',y)$ has a $D$-still, so by Lemma \ref{lem: relative USHDs with parameters}, there is a formula $\psi(x,z,u_D,x')$ and a definable map $f(x,z,x')$, such that for any $a'\in M^{x'}$, the $L(a')$-formula $ \psi_{a'}(x,z,u_D):=\psi(x,z,u_D;a')$ is a $D$-still for $\xi_{a'}(x,y):\equiv \xi(x,y;a')$, witnessed by $f_{a'}(x,z):=f(x,z;a')$.
    Let
    \[\theta^\epsilon(x',y,z,u_D):=\forall x(\psi(x,z,u_D,x')\rightarrow \varphi^\epsilon(x,x',y))\]
    with $\epsilon\in \{+,-\}$ and $\varphi^+:=\varphi$, $\varphi^-:=\neg\varphi$.
    Applying the inductive hypothesis and Lemma \ref{lem: relative USHDs with parameters} to the formula $\theta^\epsilon(x';y,z,u_D):\equiv \theta^\epsilon(x',y,z,u_D)$, we find a formula $\chi^\epsilon(x',s^\epsilon,w^\epsilon_D,u_D)$ and a definable map $g^\epsilon(x',s^\epsilon,u_D)$ such that for any $d\in D^{|u_D|}$, the formula $\chi^\epsilon_d(x',s^\epsilon,w^\epsilon_D):\equiv \chi^\epsilon(x',s^\epsilon,w^\epsilon_D;d)$ is a $D$-still for $\theta^\epsilon_d(x';y,z):=\theta^\epsilon{(x',y,z;d)}$, witnessed by $g^\epsilon_d(x',s^\epsilon):=g^\epsilon(x',s^\epsilon;d)$, in $M(d)$.
    Then
    \[\rho(x,x';z,s^+,s^-;u_D,w^+_D,w^-_D):\equiv \psi(x,z,u_D,x')\wedge \chi^+(x',s^+,w^+_D,u_D)\wedge\chi^-(x',s^-,w^-_D,u_D)\]
    is a $D$-still for $\varphi(x,x';y)$, witnessed by 
    \[h(x,x';z,s^+,s^-):=\big(f(x,z,x'),g^+(x',s^+,f(x,z,x')),g^-(x',s^-,f(x,z,x'))\big).\]
    Indeed, let $a\in M^x$, $a'\in M^{x'}$ and $B\fin M^y$. 
    Find $c\in B^z$ such that $M\models \psi_{a'}(a,c,f(a,c,a'))$ and, setting $d:=f(a,c,a')$,
    \begin{align}\label{eq: transitivity of relative distality}
        M\models \forall x\big(\psi_{a'}(x,c,d)\rightarrow\bigwedge_{b\in B}\varphi(x,a',b)\leftrightarrow\varphi(a,a',b)\big).
    \end{align}
    Then, for $\epsilon\in\{+,-\}$, find $e^\epsilon\in B^{|s^\epsilon|/|y|}$ such that $M\models\chi^\epsilon_d(a',e^\epsilon,g^\epsilon(a',e^\epsilon,d))$ and
    \begin{align}\label{eq: second part of transitivity of relative distality}
        M\models \forall x'\Big(\chi^\epsilon_d(x',e^\epsilon,g^\epsilon(a',e^\epsilon,d))\rightarrow\bigwedge_{b\in B}\big(\theta^\epsilon_d(x',b,c)\leftrightarrow\theta^\epsilon_d(a',b,c) \big)\Big).
    \end{align}
    Set $d^\epsilon:=g^\epsilon(a',e^\epsilon,d)$. Then $M\models \rho(a,a';c,e^+,e^-;d,d^+,d^-)$.\\
    Suppose now $(\Tilde{a},\Tilde{a}')\models \rho(x,x';c,e^+,e^-;d,d^+,d^-)$ and let $b\in B^y$. 
    It suffices to show that $\models \varphi(a,a',b)\leftrightarrow \varphi(\Tilde{a},\Tilde{a}',b)$. Suppose  $\models \varphi(a,a',b)$.
    Then by (\ref{eq: transitivity of relative distality}), we have $\models\theta^+_d(a',b,c)$. 
    Since $ \models\chi^+_d(\Tilde{a}',e^+,d^+)$ by assumption,
    it follows from (\ref{eq: second part of transitivity of relative distality}) that also
    $\models\theta^+_d(\Tilde{a}';b,c)$.
    By definition of $\theta^+$ and the fact that $\models\psi_{\Tilde{a}'}(\Tilde{a},c,d)$, it follows that 
    $\models\varphi(\Tilde{a},\Tilde{a}',b)$.
\end{proof}

Another basic but key property of relative distality is that it is preserved under Boolean combinations of formulas.

\begin{lemma}\label{lem: Boolean combinations}
    A Boolean combination of formulas which have $D$-stills has a $D$-still.
\end{lemma}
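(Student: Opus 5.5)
It suffices to handle negation and binary conjunction, since every Boolean combination is built from these; finitely many formulas on a common variable $x$ can be viewed as formulas in $x$ and a concatenated parameter variable $y=y_1\dots y_n$, padding with dummy variables. There is one subtlety: the formulas $\varphi_1(x;y_1)$ and $\varphi_2(x;y_2)$ may have different parameter variables. So the first step is to reduce to the case $\varphi_1(x;y)$, $\varphi_2(x;y)$ with $y=y_1y_2$, where $\varphi_i$ ignores the other component.

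If $\psi_1(x;z_1;u_D)$ is a $D$-still for $\varphi_1(x;y_1)$, witnessed by $f_1$, I would turn it into a $D$-still for $\varphi_1(x;y_1y_2)$ as follows. Replace each copy of $y_1$ in $z_1$ by a copy of $y=y_1y_2$ and ignore the $y_2$-components. Given $B\fin M^{y}$, let $B_1$ be its projection to $y_1$. If $|B_1|\geq 2$, choose $c_1\in B_1^{t}$ as guaranteed and lift each coordinate to an element of $B$. Then $\tp_{\varphi_1}(a/B)$ is determined by $\tp_{\varphi_1}(a/B_1)$, so isolation is preserved. The degenerate case $|B_1|=1$ is the one annoyance. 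Here I would use the coding trick of Remark \ref{remark: facts about relative USHDs}(iii) with a second piece $\varphi_1(x;y_1)^{\epsilon}$, $\epsilon\in\{+,-\}$, evaluated at the single $b_1$. This is a formula with parameter from $B$, which itself isolates the $\varphi_1$-type over $B$. The two pieces can be coded together with the sign encoded by a further formula, since $|B|\geq 2$.

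Negation is trivial: $\tp_{\neg\varphi}(a/B)$ and $\tp_\varphi(a/B)$ determine each other, so any $D$-still for $\varphi$ is a $D$-still for $\neg\varphi$ with the same witness $f$. For the conjunction $\varphi_1\wedge\varphi_2$ with $D$-stills $\psi_i(x;z_i;u^i_D)$ witnessed by $f_i$, I would take
\[\psi(x;z_1z_2;u^1_Du^2_D):\equiv\psi_1(x;z_1;u^1_D)\wedge\psi_2(x;z_2;u^2_D),\]
with witness $f(x;z_1z_2):=(f_1(x;z_1),f_2(x;z_2))$. Given $a$ and $B$ with $|B|\ge2$, pick $c_1,c_2$ for $\varphi_1,\varphi_2$ respectively. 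Then $a$ satisfies $\psi(x;c_1c_2;f(a,c_1c_2))$. Any realization $a'$ of this formula has the same $\varphi_1$-type and the same $\varphi_2$-type over $B$ as $a$, hence the same $(\varphi_1\wedge\varphi_2)$-type.

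The main obstacle is only the bookkeeping of the variable alignment in the first step, which requires the degenerate-projection case handled by coding. The rest is immediate from the definitions.
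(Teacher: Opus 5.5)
Your proof is correct and follows the paper's argument: negation is immediate, and conjunction is handled by conjoining the two stills, with the paired map $(f_1,f_2)$ as witness. Your preliminary step aligns the parameter variables by lifting tuples from the projection $B_1$ back to $B$. Where $|B_1|=1$, you code in the pieces $\varphi_1^{\pm}(x;y_1)$. The paper's one-line proof glosses over this detail, and you handle it correctly.
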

\begin{proof}
    Any $D$-still for $\varphi(x,y)$ is also a $D$-still for $\neg\varphi(x,y)$. Suppose $\psi_1(x^1,z^1,u^1_D)$ is a $D$-still for $\varphi_1(x^1,y^1)$ and $\psi_2(x^2,z^2,u_D^2)$ is a $D$-still for $\varphi_2(x^2,y^2)$. Then the formula $\psi(x^1,x^2;z^1, z^2; u_D^1, u_D^2)$ defined as
    $\psi^1(x^1,z^1,u^1_D)\wedge\psi^2(x^2,z^2,u^2_D)$ is a $D$-still for $\varphi^1\wedge\varphi^2$.
\end{proof}

In some of the examples that follow, $D$ is an \emph{imaginary sort} of $M$. 
That is, $D$ is the quotient of a definable set $X$ by a definable equivalence relation $R$. 
In this case, $L$ has a natural expansion by a new sort for $D$ and a new definable map $\pi: X\to D$, and $M$ has a natural $L(D)$-expansion by setting $D(M):=X/R$ and $\pi(x):=x/R$.

\begin{example}\label{example: ordered equivalence}
    Let $M=(X,<,R)$ be the `colored order' from \cite{Simon_2013}, that is $(X,<)$ is a dense linear order and $R$ is an equivalence relation with infinitely many classes, which is generic with respect to the order $<$ in the sense that every $R$-class is order-dense. 
    Using quantifier elimination, one can see that $M$ is NIP. 
    On the other hand, $M$ is clearly unstable, and it is non-distal, as witnessed by the formula $R(x,y)$. 
    However, $M$ is distal relative to the imaginary sort $D:=M/R$.
    By quantifier elimination, Lemma \ref{lem: relative distality in one variable enough} and Lemma \ref{lem: Boolean combinations}, it is enough to consider unary atomic formulas. The formula $x<y$ is distal (i.e. has an $\emptyset$-still) by distality of dense linear orders. 
    On the other hand, the formula $\psi(x;u_{D}):\equiv \pi_R(x)=u_{D}$ is a $D$-still for the formula $R(x;y)$.
\end{example}

\begin{example}\label{example: division ring}
    Let $F$ be an ordered division ring. Consider the language $L:=\{+,0,(\lambda\cdot)_{\lambda\in F},<,P\}$ of ordered $F$-vector spaces with an added unary predicate $P$. Let $T_{ds}$ be the theory of dense ordered $F$-vector spaces with an order-dense proper subspace $P$. 
    Then $T_{ds}$ admits quantifier elimination by \cite[Theorem 5.15]{belegradek1999extended}, and is non-distal by \cite[Theorem 5.16]{belegradek1999extended}. 
    The formula $x-y\in P$ witnesses non-distality, but this is the only obstruction; $T_{ds}$ is distal relative to $V/P$, where $V$ denotes the home sort. By quantifier elimination and Lemma \ref{lem: relative distality in one variable enough}, it is enough to consider unary atomic formulas, i.e. formulas of the form
    \[ \overbrace{\ \lambda x=\tau(y)}^{\equiv:\varphi_1(x;y)},\ \overbrace{\lambda x<\tau(y)}^{\equiv:\varphi_2(x;y)}\text{ or }\overbrace{\tau(y)-\lambda x\in P}^{\equiv:\varphi_3(x;y)}\]
    where $\tau$ is an $L$-term and $|x|=1$. 
    The formulas $\varphi_1$ and $\varphi_2$ are distal, i.e. have $\emptyset$-stills, since for any $V\models T_{ds}$, and any $a\in V$ and $B\fin V^y$, the type
    $\tp_{\varphi_1,\varphi_2}(a/B)$ is determined by the order-type of $\lambda a$ over $\tau(B)$.
    On the other hand, $\tp_{\varphi_3}(a/B)$ is isolated by the formula $\lambda x+P=\lambda  a+P$, so the formula $\psi(x;u_{V/P}):\equiv \lambda x+P=u_{V/P}$ is a $V/P$-still for $\varphi_3$.
\end{example}

\begin{example}
    We prove in Section \ref{sec: ACVF} that the theory $\ACVF$ of algebraically closed valued fields is distal relative to the residue field sort.
\end{example} 

We show in the following lemma that in Definition \ref{def: relative distality on a set} (i), one can allow the parameter $c$ to come from $\acl(B)$ instead of $B$, as long as the algebraic formula $\chi(z;b)\in\tp(c/B)$ isolating the type is independent of $B$ and $a$. 
Similarly, the parameter $f(a,c)$ can be replaced by a parameter in $\acl\big(D(\dcl(Ba))\big)$, under a uniformity assumption.
Given a partitioned formula $\chi(z;w)$, we say that $\chi$ is \emph{uniformly algebraic} of degree $k\in\mathbb N$ in $M$ if $|\chi(M^z;b)|\leq k$ for each $b\in M^w$.
We say $\chi$ is uniformly algebraic of degree $k$ in $T$ if this is the case for every model of $T$.
Denote by $w$ a concatenation of $m$-many disjoint copies of $y$.

\begin{lemma}\label{lemma: algebraic parameters in USHD}
    Let $\varphi(x;y)$, $\psi(x;z;u_D)$,  $\chi(z;w)$, and $\chi_D(u_D;v_D)$ be $L$-formulas, where $\chi$ and $\chi_D$ are uniformly algebraic in $T$, and let $f(x;w)$ be a definable map with codomain $D^{|v_D|}$.
    Suppose for every $M\models T$ and every $a\in M^x$ and $B\fin M^y$ there is $\overline{b}\in B^m$, and some $c\in \chi(M^z;\overline{b})$ and $d\in\chi_D(M^{u_D};f(a,\overline{b}))$ with
    \begin{align}\label{eq: Lemma for algebraic USHD}
            (M,B)\models \psi(a;c;d)\ \wedge \ \forall x\big(\psi(x;c;d)\rightarrow\tp_\varphi(x/B)=\tp_\varphi(a/B)\big);
        \end{align}
    Then $\varphi(x;y)$ has a $D$-still in $T$. 
\end{lemma}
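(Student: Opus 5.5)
The plan is to get rid of the algebraic parameters by \emph{enumerating} the finitely many candidates. This enumeration has to be uniform and definable, using only parameters from $B$ and from $D$. Let $k$ and $k_D$ be the uniform degrees of $\chi$ and $\chi_D$. The point is that $c$ is one of at most $k$ solutions of $\chi(z;\overline{b})$, and $d$ is one of at most $k_D$ solutions of $\chi_D(u_D;f(a,\overline b))$. Rather than naming $c$ and $d$ individually, we will quantify over them inside the still formula, in a way that determines a union of at most $k\cdot k_D$ candidate sets. We then cut down to the correct one, using finitely many extra parameters from $D$ together with the coding trick of Remark~\ref{remark: facts about relative USHDs}(iii).

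\textbf{Step 1: the formula $\theta$.} Define
\[\theta(x;w;v_D):\equiv \exists z\,\exists u_D\Big(\chi(z;w)\wedge\chi_D(u_D;v_D)\wedge\psi(x;z;u_D)\wedge \forall x'\big(\psi(x';z;u_D)\rightarrow \textstyle\bigwedge \ldots\big)\Big).\]
The conjunct ``$\bigwedge\ldots$'' is the obstacle: it would have to say $\tp_\varphi(x'/B)=\tp_\varphi(x/B)$, which is not first-order uniformly in $B$. So instead I will use a finite selection device.

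\textbf{Step 2: parametrising the disjunction.} Work with the partial order of pairs $(c',d')$ satisfying $\chi(z;\overline b)\wedge\chi_D(u_D;f(a,\overline b))$; there are at most $kk_D$ of them. For each $c'$ in this finite set, the set $\psi(M;c';d')$ is one of finitely many sets. Consider the formula
\[\Psi(x;w;v_D):\equiv \exists z\,\exists u_D\big(\chi(z;w)\wedge\chi_D(u_D;v_D)\wedge\psi(x;z;u_D)\wedge x\ \text{lies in exactly the same } \psi(\cdot;z';u'_D)\text{'s as } a\big).\]
To make the last clause definable, I add a residue-sort parameter: a subset $S\subseteq[k]\times[k_D]$ recording which candidate pairs $(z',u'_D)$ contain $a$. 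The catch is that the candidates need a definable ordering, and no such ordering is available in general.

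\textbf{Step 3: ordering the candidates (the main obstacle).} Instead of ordering them, I record the Boolean atom of $a$ relative to the finite family $\{\psi(M;c';d')\}$. Define $\Theta(x;w)$ to be the intersection of all sets $\psi(M;c';d')$ containing $x$, where $(c',d')$ range over solutions with $d'\in\chi_D(\cdot\,;f(x,w))$. Then $\Theta(x;\overline b)$ holds at $a$. The key claim is that every $x'$ with $\Theta(x';\overline b)$ and $f(x',\overline b)=f(a,\overline b)$ lies in $\psi(M;c;d)$, and hence has the same $\varphi$-type over $B$ as $a$. This holds because the $\chi_D$-candidate set is the same for $x'$ as for $a$ (same $f$-value), and $\psi(M;c;d)$ is among the sets containing $a$.

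\textbf{Step 4: conclusion.} By Remark~\ref{remark: facts about relative USHDs}(iv), with $\theta:\equiv\Theta$ and the definable map $f$, we conclude that $\varphi$ has a $D$-still of arity $m$. I expect Step 3 to need care: I have to check that $\Theta$ is first-order, which it is by uniform finiteness, via a disjunction over the $\le kk_D$ possibilities.
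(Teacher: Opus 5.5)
There is a genuine gap, and it is in your Step~3. As written, $\Theta(x;w)$ says that $x$ lies in the intersection of all candidate sets $\psi(M;c';d')$ \emph{that contain $x$}. Every point of every candidate set satisfies this, so $\Theta(x;\overline b)\wedge f(x,\overline b)=f(a,\overline b)$ cannot separate $a$ from the points of a wrong candidate set. For instance, if $f$ carries no information and the candidate sets are singletons of different $\varphi$-types over $B$, all of them satisfy your formula.

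Your justification of the key claim silently replaces ``sets containing $x'$'' by ``sets containing $a$''. The formula you actually need, ``$x'$ lies in every candidate set that contains $a$'', would indeed force $x'\in\psi(M;c;d)$. But it uses $a$ itself as a parameter, which a $D$-still does not allow. The only information about $a$ available is $f(a,\cdot)\in D$ together with the choice of parameters from $B$. Recording \emph{which} candidates contain $a$ requires naming the candidates, which is exactly the ordering problem you raised in Step~2 and never resolved; the ``residue-sort parameter'' there also has no meaning for a general $D$. Correspondingly, your arity count $m$ is too small: $\overline b$ and $f(a,\overline b)$ alone do not suffice.

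The missing idea is that your Step~1 formula was the right one. The non-first-order clause ``$\tp_\varphi(x'/B)=\tp_\varphi(x/B)$'' can be replaced by agreement on finitely many \emph{extra parameters from $B$}.
\begin{enumerate}
\item[(1)] There are at most $kk_D$ candidate pairs $(c_i,d_j)$. Call a pair bad if $\psi(M;c_i;d_j)$ contains a point $x''$ with $\tp_\varphi(x''/B)\neq\tp_\varphi(a/B)$; then $(c,d)$ is not bad, so there are at most $kk_D-1$ bad pairs.
\item[(2)] For each bad pair, choose $e^{i,j}\in B$ with $\varphi(x'',e^{i,j})\not\leftrightarrow\varphi(a,e^{i,j})$, filling any remaining slots arbitrarily. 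Add these as new copies $y^{i,j}$ of $y$ in the parameter tuple.
\item[(3)] The still is $\exists z\,\exists u_D\big(\chi(z;w)\wedge\chi_D(u_D;v_D)\wedge\psi(x;z;u_D)\wedge\forall x'(\psi(x';z;u_D)\to\bigwedge_{i,j}\ldots)\big)$, evaluated at $(\overline b,\overline e)$ and $f(a,\overline b)$.
\end{enumerate}
This is essentially the paper's proof. No definable ordering of the candidates is needed, because the witnesses are chosen outside the formula and every candidate pair is tested against all of them simultaneously.

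One point needs care. The inner clause should be $\bigwedge_{i,j}\varphi^{\epsilon_{i,j}}(x',y^{i,j})$, where $\epsilon_{i,j}$ is the truth value of $\varphi(a,e^{i,j})$, with $\varphi^1=\varphi$ and $\varphi^0=\neg\varphi$. These finitely many bits are absorbed by the coding trick of Remark~\ref{remark: facts about relative USHDs}(iii). Comparing instead with $\varphi(x,y^{i,j})$, as in the paper's displayed formula, does not exclude a wrong candidate set that is homogeneous on the $e^{i,j}$, such as a singleton of the wrong type. With the truth values fixed:
\begin{enumerate}
\item[(a)] a bad pair can never witness the existential, since its point $x''$ violates the clause;
\item[(b)] $(c,d)$ witnesses it at $a$;
\item[(c)] any other pair contributes only points of the right type.
\end{enumerate}
Hence the formula isolates $\tp_\varphi(a/B)$.
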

\begin{proof}
    Let $k\in\mathbb N$ be the degree of $\chi$ and let $\ell\in\mathbb N$ be the degree of $\chi_D$. 
    Let $\overline{y}$ be the concatenation of a family $(y^{i,j})_{i\leq k-1,j\leq\ell-1}$ of disjoint copies of $y$, which are disjoint from $w$ and $z$.
    Set $\psi'(x;w,\overline{y};v_D)$ to be the formula 
    \begin{align*}
        \exists z\exists u_D\Big(&\chi(z;w)\wedge\chi_D(u_D;v_D)\wedge\psi(x;z;u_D)\wedge\\
        &\forall x'\big(  \psi(x';z;u_D)\rightarrow \bigwedge_{i\leq k-1,j\leq\ell-1}(\varphi(x',y^{i,j})\leftrightarrow\varphi(x,y^{i,j}))\big)\Big). 
    \end{align*} 
    Given $a\in M^x$ and $B\fin M^y$, let $\overline{b}$, $c$, and $d$ be as in the assumption. 
    Let $c_1,\ldots,c_{k}$ enumerate $\chi(M^z;\overline{b})$ with $c_k=c$. 
    Similarly, let $d_1,\dots,d_\ell$ enumerate $\chi_D(M^{u_D};f(a,\overline b))$ with $d_\ell=d$.
    Then for each $(i,j)\in[k-1]\times [\ell-1]$ with the property that the second conjunct in (\ref{eq: Lemma for algebraic USHD}) does not hold with $c_i$ in place of $c$ and $d_j$ in place of $d$, let $e^{i,j}\in B$ witness this, i.e. 
    \[\psi(x;c_i;d_j)\not\rightarrow \big(\varphi(x,e^{i,j})\leftrightarrow\varphi(a,e^{i,j})\big).\] 
    For all other pairs $(i,j)\in[k-1]\times [\ell-1]$, fix any $e^{i,j}\in B$. 
    Then $\models \psi'(a;\overline{b},\overline{e};f(a,\overline{b}))$ and this isolates $\tp_\varphi(a/B)$.
\end{proof}

\subsection{Algebraically closed valued fields}\label{sec: ACVF}

We prove in this section that the three-sorted theory of algebraically closed valued fields is distal relative to the residue field sort.

\begin{definition}\label{definition: valued field}
    A \textit{valued field} $(K,v)$ consists of a field $K$ and a surjective map $v:K\to \Gamma\cup\{\infty\}$ where $(\Gamma,+,<)$ is an ordered abelian group and where we set $\gamma<\infty$ for each $\gamma\in \Gamma$, such that the following conditions hold for all $a$ and $b$ from $K$.
    \begin{enumerate}
        \item[(i)] $v(a\cdot b)=v(a)+v(b)$;
        \item[(ii)] $v(a)=\infty$ if and only if $a=0$;
        \item[(iii)] $v(a+b)\geq \min(v(a),v(b))$ with equality whenever $v(a)\neq v(b)$.
    \end{enumerate}
    We call $\Gamma$ the \emph{value group} and $v$ the \emph{valuation} of $(K,v)$.
    We say that $(K,v)$ is non-trivial if $v(a)>0$ for some non-zero $a\in K$.
    Condition $(iii)$ is called the \emph{ultrametric triangle inequality}.
\end{definition}

If $(K,v)$ is a valued field, let $\mathcal{O}_v(K)$ be the set of all field elements $a$ with $v(a)\geq 0$; $\mathcal{O}_v(K)$ forms a ring called the \emph{valuation ring}.
The valuation ring has a unique maximal ideal denoted $\mathfrak m_v(K)$, which coincides with the set of field elements $a$ with $v(a)>0$. 
The quotient field $\mathcal{O}_v(K)/\mathfrak m_v(K)$ is called the \emph{residue field} of the valued field $(K,v)$, which we denote by $k_v(K)$.
We write $\res_v$ for the quotient map $\mathcal{O}_v(K)\to k_v(K)$.
We will view valued fields as three-sorted structures, following notation from \cite{acvf_hils}.

\begin{definition}
    Let $L_{\Gamma ,k}$ be the three-sorted language consisting of
    \begin{itemize}
        \item a sort $\mathbf K$ together with the language $L_{\mathbf K}=\{+,\cdot,-,0,1\}$ of rings;
        \item a sort $\mathbf \Gamma$ together with the language $L_{\mathbf \Gamma}:=\{+,-,0,<,\infty\}$ and a map $\text{val}:\mathbf K\to \mathbf\Gamma$;
        \item a sort $\mathbf k$ together with the language $L_{\mathbf{k}}=\{+,\cdot,-,0,1\}$ and a map $\Res:\mathbf K^2\to \mathbf k$.
    \end{itemize}
\end{definition}

We view a valued field $(K,v)$ as an $L_{\Gamma ,k}$-structure by identifying $(K,v)$ with $(K,\Gamma,k_v(K),v,\Res_v)$, where $v$ is the interpretation of $\text{val}$, and
\[
    \Res_v(a,b):=\begin{cases}
        \res_v(\frac{a}{b}),\  \text{if }b\neq 0\text{ and }v(a)\geq v(b),\\
        0,\ \text{otherwise}.
    \end{cases}
\]
By convention, we set $\infty+\nu=\infty$ and $-\infty=\infty$ for $\nu\in\Gamma$.
Throughout, we write $(K,v)$ to denote a valued field. 
We may refer to $K$ as a valued field, and 
given an $L_{\Gamma ,k}$-formula or $L_{\Gamma ,k}$-definable map $\rho$, we write $\rho^K$ for the interpretation of $\rho$ in $(K,v)$.
We will never consider two different valuations on a given field at once, so this notation is unambiguous.

\begin{definition}
    Let $\ACVF$ be the $L_{\Gamma ,k}$-theory of algebraically closed fields with a non-trivial valuation.
\end{definition}

The remainder of this section is dedicated to proving the following theorem.

\begin{theorem}\label{thm: ACVF distal relative to k}
    $\ACVF$ is distal relative to the sort $\mathbf k$.
\end{theorem}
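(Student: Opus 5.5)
We argue as the overview indicates: by Lemma \ref{lem: relative distality in one variable enough} it suffices to produce a $\mathbf k$-still for every $L_{\Gamma,k}$-formula $\varphi(x;y)$ with $x$ a single variable. In the three-sorted setting this means a single variable of sort $\mathbf K$, $\mathbf\Gamma$ or $\mathbf k$.

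For the sorts $\mathbf\Gamma$ and $\mathbf k$ we use quantifier elimination for $\ACVF$ in $L_{\Gamma,k}$, which follows from Robinson's theorem \cite{robinson1956complete}. It implies that $\mathbf\Gamma$ is a pure divisible ordered abelian group and $\mathbf k$ is a pure algebraically closed field, both stably embedded and orthogonal. A unary formula in a $\mathbf\Gamma$-variable is then equivalent to a Boolean combination of formulas $n x \,\square\, \tau(y)$, with $\square\in\{=,<\}$ and $\tau$ a term into $\mathbf\Gamma$. These have $\emptyset$-stills by o-minimality (distality) of DOAG: the order type of $nx$ over $\tau(B)$ is isolated using two parameters from $B$. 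A unary formula in a $\mathbf k$-variable is trivially handled by the $\mathbf k$-still $x=u_D$ with $f(x)=x$. Boolean combinations are handled by Lemma \ref{lem: Boolean combinations}.

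The main case is $x$ of sort $\mathbf K$. By quantifier elimination and Lemma \ref{lem: Boolean combinations}, we may take $\varphi(x;y)$ atomic. After absorbing terms in $y$, it then has one of three forms: a polynomial identity $P(x,y)=0$; a valuation comparison $n\,\mathrm{val}(P(x,y))\,\square\,\gamma(y)$, where the right side is a $\Gamma$-term possibly involving other $\mathrm{val}(Q(x,y))$; or a residue-field atomic formula $R(\Res(P_1(x,y),Q_1(x,y)),\dots)$. Since $\mathbf K$ is algebraically closed, we factor each polynomial $P(x,b)$ into linear factors $\prod_i (x-\alpha_i(b))$, with roots uniformly algebraic over $b$. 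Then $\mathrm{val}(P(a,b))=\mathrm{val}(\mathrm{lc})+\sum_i \mathrm{val}(a-\alpha_i)$. Similarly, $\Res(P(a,b),Q(a,b))$ is determined by the residues of the ratios $(a-\alpha_i)/(a-\alpha_j)$ together with the leading coefficients, in the cases where the relevant valuations coincide.

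So, using Lemma \ref{lemma: algebraic parameters in USHD} to allow roots in $\acl(B)$ as parameters, the task reduces to isolating the following data for a finite set $C=\{\alpha\}$ of field points: the ``position'' of $a$ relative to $C$, namely the values $\mathrm{val}(a-\alpha)$ for $\alpha\in C$ and the residues $\Res(a-\alpha,a-\beta)$. Here we follow the refinement of \cite[Lemma 5.2]{bays_martin} announced as Lemma \ref{lemma: swiss cheeses}. Choose $\alpha_0\in C$ nearest to $a$, i.e.\ maximizing $\mathrm{val}(a-\alpha)$. Given $\alpha_0$, the value $\gamma_0:=\mathrm{val}(a-\alpha_0)$ determines $\mathrm{val}(a-\alpha)=\min(\gamma_0,\mathrm{val}(\alpha_0-\alpha))$ for all $\alpha$, except for those $\alpha$ with $\mathrm{val}(\alpha-\alpha_0)=\gamma_0$. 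For those exceptional $\alpha$, the answer is determined by the residue $\Res(a-\alpha_0,\alpha-\alpha_0)$. The value $\gamma_0$ is pinned down, as in DOAG, by two elements $\alpha_1,\alpha_2$ whose distances to $\alpha_0$ bracket $\gamma_0$, giving an annulus $\gamma_1<\mathrm{val}(x-\alpha_0)<\gamma_2$. In the case of equality with some $\mathrm{val}(\alpha_1-\alpha_0)$, we add the residue-field parameter $u_D:=\Res(a-\alpha_0,\alpha_1-\alpha_0)$. This is exactly where the residue sort enters, since the residue classes inside a ball form a stable, non-distal set.

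The resulting formula, an annulus or residue-disc condition around $\alpha_0$ with the data $(\alpha_0,\alpha_1,\alpha_2)$ algebraic over a bounded number of elements of $B$ and $u_D$ a definable function of $a$ and those parameters, isolates the position of $a$ over $C$, hence $\tp_\varphi(a/B)$. We then code the finitely many cases via Remark \ref{remark: facts about relative USHDs}(iii).

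The main obstacle is making this uniform. The root $\alpha_0$ and the bracketing elements must be chosen from $\acl(B)$ via a uniformly algebraic formula $\chi$ over boundedly many elements of $B$. In addition, the residue parameter must be given by a definable map of $a$ together with these algebraic parameters; this is precisely what Lemma \ref{lemma: algebraic parameters in USHD} with $\chi_D$ permits, after taking a residue with respect to an algebraic point. Finally, we need to check that the residue-field formulas $R(\Res(\dots))$ depend only on finitely many residues $\Res(a-\alpha_0,\beta)$ for $\beta$ in the given ball, which the single coordinate $u_D$, together with differences computed over $B$, determines.
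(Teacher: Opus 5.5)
\paragraph{The gap: thresholds that are not distances between roots.} The argument breaks when you reduce the field-sort case to isolating ``the position of $a$ relative to $C$''. The exact values $\mathrm{val}(a-\alpha)$ cannot in general be isolated with parameters from $\acl(B)$ and $\mathbf k$. What your annulus $\gamma_1<\mathrm{val}(x-\alpha_0)<\gamma_2$ actually isolates is the cut of $\gamma_0=\mathrm{val}(a-\alpha_0)$ among the distances $\mathrm{val}(\alpha-\alpha_0)$, $\alpha\in C$, and that does not determine $\tp_\varphi(a/B)$. The atomic formulas compare $\sum_i\ell_i\,\mathrm{val}(x-\alpha_i)$ with thresholds built from the $\mathbf\Gamma$-parameters and from valuations of coefficients, and these are not distances between roots. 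Take $\varphi(x;y_{\mathbf K},y_{\mathbf\Gamma}):\equiv v(x-y_{\mathbf K})<y_{\mathbf\Gamma}$ and $B=\{(0,\mu_1),\dots,(0,\mu_n)\}$. Then $C=\{0\}$, so you have no bracketing elements at all, yet the type is the cut of $v(a)$ in $\{\mu_1,\dots,\mu_n\}$. The missing ingredient is what Lemma~\ref{lemma: swiss cheeses} supplies: a uniformly algebraic set of radii $\chi_2(\Gamma;b,\mu)$. It contains the root distances \emph{and} the values $Q_\delta$ built from $\tau(\mu)$, $v(t_1/t_2)$ and root differences. The bracketing must then be done inside the family of balls with centers in $\chi_1(K;b)$ and radii in $\chi_2(\Gamma;b,\mu)$, which is what the case analysis of Lemma~\ref{lemma: containment has USHD} does.

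\paragraph{The same gap in the residue case.} Your parameter $u_D=\Res(a-\alpha_0,\alpha_1-\alpha_0)$ with $\alpha_1\in C$ carries information only when $\gamma_0$ is itself a distance inside $C$ (Case~1 in the proof of Lemma~\ref{lemma: determining images in the residue field}). In Case~2, where the nearest roots form a single cluster, this residue is $0$ and every $\Res(a-\alpha,a-\beta)$ with $\alpha,\beta\in C$ is constant on the annulus. Yet $\Res(f_\ell(a,b),g_\ell(a,b))$ may be non-zero and vary with $a$. For instance, for $\Res(x,y_{\mathbf K})=y_{\mathbf k}$ one has $C=\{0\}$, while the type records $\res(a/b)$ for every $b$ with $v(b)=v(a)$. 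This happens exactly when $\gamma_0$ hits one of the thresholds above. The paper handles it in two steps. It first uses the $\mathbf k$-still of Proposition~\ref{prop: relative distality of valuation formulas} to isolate which pairs $(\ell,b)$ satisfy $v(f_\ell)=v(g_\ell)\neq\infty$. It then records the residue values $\rho_{\ell_i}(a,b_i)$ for one witness per exponent $k(\ell,b)\in S$.

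\paragraph{Repairing your approach.} Your one-coordinate idea can be rescued, but only with a scale $\beta$, uniformly algebraic over boundedly many elements of $B$, with $v(\beta)=\gamma_0$. Such a $\beta$ exists whenever some pair with $v(f_\ell(a,b))=v(g_\ell(a,b))\neq\infty$ has $k(\ell,b)\neq 0$; otherwise, once the valuative data are isolated, the residues are constant on the annulus. Then $\Res(x-\alpha_0,\beta)=\Res(a-\alpha_0,\beta)$ forces $v(x-a)>\gamma_0$, and Lemma~\ref{lemma: determining images under the residue map} applies. Your closing ``we need to check'' sentence leaves exactly this point open.

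\paragraph{A minor point.} To apply Lemma~\ref{lemma: algebraic parameters in USHD} you also need quantifier elimination. It is used to rewrite ``$u_D$ is a residue computed from roots'' as algebraicity over a $\mathbf k$-valued definable function of $(a,\bar b)$, as in the proof of Proposition~\ref{prop: relative distality of valuation formulas}.
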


The following fact is used crucially in two steps of the proof. It is essentially due to Robinson \cite{robinson1956complete} and made explicit in \cite[Theorem 2.1.1]{haskell2006definable}.

\begin{fact}[Robinson]\label{fact: QE in ACVF}
    $\ACVF$ admits quantifier elimination. 
\end{fact}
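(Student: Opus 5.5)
We prove the Fact with the standard back-and-forth criterion for quantifier elimination. It suffices to take the following data:
\begin{itemize}
\item $M,N\models\ACVF$, with $N$ being $|M|^+$-saturated;
\item a substructure $A=(K_A,\Gamma_A,k_A)\subseteq M$ (closed under $\text{val}$ and $\Res$);
\item an $L_{\Gamma,k}$-embedding $\iota:A\to N$.
\end{itemize}
We then show that $\iota$ extends to an embedding of $A\langle m\rangle$ into $N$ for any element $m$ of any sort of $M$. Throughout, the valuation on a field and $\Res$ are determined by the residue map and $\text{val}$, so it is enough to track those. We enlarge $A$ step by step. At each step the extension is either canonical, or unique up to an automorphism over $\iota(A)$ that we are free to choose.

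\emph{Step 1: normalizing $A$.} First replace $K_A$ by its fraction field; the valuation extends uniquely from a domain. Next, extend to the field-theoretic algebraic closure of $K_A$ inside $M$. For this we use the classical fact that extensions of a valuation to a normal algebraic extension are conjugate under field automorphisms. Hence after finding, by quantifier elimination for ACF, some field embedding into $N$, we compose with a suitable Galois conjugation so that the valuation is respected. After this step $K_A$ is algebraically closed, so its value group is divisible and its residue field is algebraically closed.

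\emph{Step 2: the extra elements of $k_A$ and $\Gamma_A$.} Suppose $\alpha\in k_A$ is not a residue of an element of $K_A$. Then $\alpha$ is transcendental over $\res(K_A)$. Pick $a\in\mathcal O_v(M)$ with $\res(a)=\alpha$. The valuation on $K_A(a)$ is then the Gauss valuation $v(\sum c_ia^i)=\min_i v(c_i)$. Choosing $a'\in N$ with $\res(a')=\iota(\alpha)$ gives the same formula, so $a\mapsto a'$ extends $\iota$.

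Similarly, suppose $\gamma\in\Gamma_A$ lies outside the divisible group $v(K_A)$. Then $\gamma$ is torsion-free over it. Pick $a$ with $v(a)=\gamma$. The valuation on $K_A(a)$ is $v(\sum c_ia^i)=\min_i(v(c_i)+i\gamma)$, and we match it in $N$ in the same way. Iterating, together with Step 1, we may assume that $\Gamma_A=v(K_A)$ and $k_A=\res(K_A)$, with $K_A$ algebraically closed.

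\emph{Step 3: adding a new element.} A new element $\gamma$ in $\Gamma_M$ or $\alpha$ in $k_M$ is handled by Step 2 applied to any field preimage. The embedding exists by saturation of $N$, quantifier elimination for divisible ordered abelian groups (for the cut of $\gamma$), and quantifier elimination for ACF (for $\alpha$ being transcendental).

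So let $b\in K_M\setminus K_A$. If $K_A(b)$ has a new value or a new residue, then after translating and scaling $b$ by elements of $K_A$ we are in the residually or valuationally transcendental case of Step 2. Otherwise $K_A(b)/K_A$ is immediate.

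\emph{The immediate case: main obstacle.} This is the step I expect to be the main difficulty. Here, by Kaplansky's theory, $b$ is a pseudo-limit of a pseudo-Cauchy sequence $(c_\rho)$ in $K_A$ that has no pseudo-limit in $K_A$. Because $K_A$ is algebraically closed, this sequence is of transcendental type. Consequently, for each polynomial $P$ over $K_A$, the value $v(P(b))$ is eventually equal to $v(P(c_\rho))$, so the valued field $K_A(b)$ is determined up to isomorphism by the sequence.

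Equivalently, the quantifier-free type of $b$ over $K_A$ is determined by the chain of balls $\{x: v(x-c_\rho)>v(c_{\rho+1}-c_\rho)\}$. The images of these balls under $\iota$ form a countably-or-$|A|$-sized chain of nonempty definable sets in $N$. By saturation their intersection contains some $b'$. Any such $b'$ is again a pseudo-limit of $(\iota c_\rho)$ of transcendental type, so $b\mapsto b'$ extends $\iota$. This completes the back-and-forth and hence proves quantifier elimination.
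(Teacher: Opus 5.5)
The paper gives no proof of this Fact; it is quoted from Robinson and from Haskell--Hrushovski--Macpherson. Your embedding-test argument is the standard route to it. As written, however, Step~1 has a genuine gap, and it sits exactly where the three-sorted language $L_{\Gamma,k}$ differs from a one-sorted one.

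\textbf{The gap in Step~1.} You extend $\iota$ from $K_A$ to $K_A^{\mathrm{alg}}$ by an ACF-embedding composed with ``a suitable Galois conjugation so that the valuation is respected''. This only guarantees that the valuation ring is preserved. But $A$ also has a residue part $k_A$, which may contain elements separably algebraic over $\res(K_A)$ that are not residues of elements of $K_A$. Such elements lie in $\res(K_A^{\mathrm{alg}})$. The residue map induced by your field embedding need not agree with $\iota$ on them, so the map on the $\mathbf k$-sort of the enlarged substructure need not be well defined.

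\textbf{An example.} Take $K_A=\mathbb Z$ in a model of residue characteristic $p\equiv 3\pmod{4}$, and $k_A=\mathbb F_p[i]$ with $i^2=-1$. Since $p$ is inert in $\mathbb Q(\sqrt{-1})$, the decomposition group of $\mathbb Q^{\mathrm{alg}}/\mathbb Q$ contains conjugations swapping $\pm\sqrt{-1}$. Both resulting embeddings respect the valuation, and one of them induces $i\mapsto -\iota(i)$ on residues.

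The same problem recurs each time you re-apply Step~1 after a residually transcendental adjunction in Step~2. The value group is harmless, since an embedding of a divisible hull is determined by its restriction, and purely inseparable residues are forced.

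\textbf{What is missing.} You need an argument that the extension to $K_A^{\mathrm{alg}}$ can be chosen to induce a prescribed residue embedding. Either of the following would do.
\begin{enumerate}
\item Use that the decomposition group of $K_A^{\mathrm{alg}}/K_A$ surjects onto $\mathrm{Aut}\big(\res(K_A)^{\mathrm{alg}}/\res(K_A)\big)$. Then conjugate so that the induced residue map extends $\iota$ on $k_A\cap\res(K_A)^{\mathrm{alg}}$.
\item Before taking algebraic closures, pass to the henselization, which embeds uniquely. For each separably algebraic $\alpha\in k_A$, use Hensel's lemma in $M$ and in $N$ to send the root with residue $\alpha$ of a monic lift of its minimal polynomial to the root with residue $\iota(\alpha)$.
\end{enumerate}

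\textbf{A smaller slip in the immediate case.} For a pseudo-limit $b$ one has, eventually, $v(b-c_\rho)=v(c_{\rho+1}-c_\rho)$. So $b$ does not lie in your balls $\{x: v(x-c_\rho)>v(c_{\rho+1}-c_\rho)\}$. You want the closed balls $\{x: v(x-c_\rho)\geq v(c_{\rho+1}-c_\rho)\}$, whose intersection is exactly the set of pseudo-limits.

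With these two repairs the remaining steps go through: the Gauss and value-transcendental extensions, the reduction of non-immediate simple extensions to those cases by translating and scaling, and Kaplansky's theorem in the immediate transcendental case.
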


By Lemma \ref{lem: relative distality in one variable enough} proving Theorem \ref{thm: ACVF distal relative to k} reduces to finding  $\mathbf{k}$-stills for all \emph{unary} partitioned formulas $\varphi(x;y)$, i.e. where $|x|=1$.
The need to work in a many-sorted language of valued fields, rather than a one-sorted one where the residue field is still present as an imaginary sort, stems from this step. 
Indeed, to apply Lemma \ref{lem: relative distality in one variable enough}, we require existence of $\mathbf k$-stills for formulas with parameters of sort $\mathbf k$.
The addition of a sort for the value group is included for the sake of generality.
Reducing further, it is sufficient by Fact \ref{fact: QE in ACVF} to prove existence of $\mathbf{k}$-stills for \emph{quantifier-free} unary partitioned formulas. 
Finally, by Lemma \ref{lem: Boolean combinations}, it suffices to find $\mathbf{k}$-stills for all \emph{atomic} unary partitioned formulas -- that is, formulas containing no quantifiers or Boolean connectives.
Note that the proof is constructive, up to the quantifier elimination from Fact \ref{fact: QE in ACVF}, which can be done by a primitive recursive algorithm by \cite{weispfenning2006quantifier}.\\

The remainder of this section is dedicated to proving that atomic unary partitioned formulas $\varphi(x;y)$ have $\mathbf{k}$-stills in $\ACVF$.
When $x$ has sort $\mathbf\Gamma$ or $\mathbf k$, this is straightforward, see Lemma \ref{lemma: relative distality of k and Gamma formulas}.
When $x$ has sort $\mathbf K$, it is enough to consider formulas of one of two forms.
Proposition \ref{prop: relative distality of valuation formulas} constructs $\mathbf{k}$-stills for formulas of the form 
\begin{align}
    &\varphi_{f,g,\sigma,\tau}(x_{\mathbf K},y_{\mathbf K},y_{\mathbf \Gamma})\ :\equiv \ v(f(x_{\mathbf K},y_{\mathbf K}))+\sigma(y_{\mathbf \Gamma})< v(g(x_{\mathbf K},y_{\mathbf K}))+\tau(y_{\mathbf \Gamma})\label{formula form 2},
\end{align}
where $f$ and $g$ are integer polynomials, and $\sigma$ and $\tau$ are $L_{\Gamma}$-terms. 
Note that this also covers formulas of
the form (\ref{formula form 2}) with $<$ replaced by $=$, since Boolean combinations of formulas with $\mathbf{k}$-stills have $\mathbf{k}$-stills.
Moreover, we get $\mathbf{k}$-stills for formulas of the form $f(x_{\mathbf K},y_{\mathbf K})=0$, since this is equivalent in $\ACVF$ to $v(f(x_{\mathbf K},y_{\mathbf K}))=v(0)$.
The proof of Proposition \ref{prop: relative distality of valuation formulas} is an adaptation of the proof of \cite[Lemma 5.5]{bays_martin}, going via a uniform Swiss cheese decomposition (Lemma \ref{lemma: swiss cheeses}) and existence of a $\mathbf k$-still for containment in valuative balls (Lemma \ref{lemma: containment has USHD}), with additional care taken to account for the value group variables. 

Proposition \ref{prop: res formulas are relatively distal} establishes existence of $\mathbf k$-stills for atomic formulas of the form 
\begin{align}
    &\varphi_{\overline{f},\overline{g},P}(x_{\mathbf K};y_{\mathbf K},y_{\mathbf k}):\equiv P\Big(\Res\big((\overline f(x_{\mathbf K},y_{\mathbf K}),\overline g(x_{\mathbf K},y_{\mathbf K})\big), y_{\mathbf k}\Big)=0,\label{formula form 3}
\end{align}
where $P$ is a polynomial with integer coefficients, and $\overline{f}$ and $\overline{g}$ are tuples of equal length of such polynomials. 
We write here $\Res(\overline{f},\overline{g})$ as shorthand for $\big(\Res(f_1,f_2),\dots,\Res(f_{|\overline{f}|},g_{|\overline{g}|})\big)$.
The proof goes via Lemma \ref{lemma: determining images in the residue field}, which establishes, roughly, that the image of a point $a$ under a finite parameterized family of maps $\big(\Res(\overline{f},\overline{g})(x;b)\big)_{b\in B}$ is determined by an instance of a formula depending only on $\overline{f}$ and $\overline{g}$ with parameters from $B$ and $\mathbf k(\dcl(Ba))$.

We will use subscripts of the form $x_{\mathbf{\Gamma}}$, $x_{\mathbf K}$, and $x_{\mathbf{k}}$, when we want to make the sort of a given variable explicit. 

\begin{lemma}\label{lemma: relative distality of k and Gamma formulas}
    Let $\varphi(x;y)$ be an atomic $L_{\Gamma ,k}$-formula where $x$ is a unary variable of the sort $\mathbf{k}$ or $\mathbf\Gamma$. Then $\varphi(x;y)$ has a $\mathbf{k}$-still in $\ACVF$.
\end{lemma}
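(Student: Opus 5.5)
We argue by cases on the sort of $x$, and in each case the $\mathbf k$-still will essentially be trivial. The main observation is that an atomic $L_{\Gamma,k}$-formula in which $x$ occurs has the following shape. If $x$ has sort $\mathbf k$, the formula is $P(x,\Res(\overline f(y_{\mathbf K}),\overline g(y_{\mathbf K})),y_{\mathbf k})=0$, where $P$ is an integer polynomial. If $x$ has sort $\mathbf\Gamma$, it is either $\sigma(x,y)<\tau(x,y)$ or $\sigma(x,y)=\tau(x,y)$, with $\sigma,\tau$ $L_{\mathbf\Gamma}$-terms that may involve $v$ applied to polynomials in $y_{\mathbf K}$. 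No function symbol has codomain $\mathbf K$ and an argument of sort $\mathbf k$ or $\mathbf\Gamma$. Hence $x$ cannot enter a term of another sort, and the formula lives entirely in the sort of $x$, with the $y$-part substituted through definable maps.

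\emph{Case $x$ of sort $\mathbf k$.} Take $\psi(x;z;u_{\mathbf k}):\equiv x=u_{\mathbf k}$ with $z$ a single copy of $y$ (a dummy, per Remark \ref{remark: facts about relative USHDs}(ii)). Take the witness $f(x;z):=x$, which is a definable map into $\mathbf k$. Then $\psi(x;c;f(a,c))$ is $x=a$, which trivially isolates $\tp_\varphi(a/B)$. So every formula with $x$ of sort $\mathbf k$ has a $\mathbf k$-still, with no need for atomicity.

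\emph{Case $x$ of sort $\mathbf\Gamma$.} We use that $\Gamma\cup\{\infty\}$ is a divisible ordered abelian group. Moving terms across, $\sigma(x,y)\ \square\ \tau(x,y)$ becomes $nx\ \square\ \rho(y)$ for an integer $n$ and an $L_{\Gamma}$-term $\rho(y)$ with $y$ substituted, where $\square\in\{<,=,>\}$.

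The first step is to handle the convention $\infty+\nu=\infty$ and $-\infty=\infty$. For this we split into finitely many definable cases according to whether $x=\infty$ and which terms in $y$ equal $\infty$. On each piece the formula is either trivial or of the linear form above, and Lemma \ref{lem: Boolean combinations} together with the coding of Remark \ref{remark: facts about relative USHDs}(iii) glues the pieces together.

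The second step treats $nx\ \square\ \rho(y)$ with $n\neq0$. Its truth is determined by the position of $x$ relative to $\rho(b)/n$, which is $\dcl$ of $b$ in the divisible group. Given finite $B$ and $a$, let $c_1,c_2\in B$ be such that $\rho(c_1)/n$ and $\rho(c_2)/n$ are the nearest points below and above $a$, or $a$ itself if it is one of them. The formula
\[\rho(c_1)/n< x<\rho(c_2)/n \quad\text{or}\quad x=\rho(c_1)/n\]
then isolates $\tp_\varphi(a/B)$. The boundary situations where no point lies below or above are handled by the same disjunction-coding, using $|B|\ge 2$. This is the standard distality argument for o-minimal or DLO-type structures. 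The resulting still uses $\emptyset$ as the $\mathbf k$-part, i.e.\ a dummy $u_{\mathbf k}$ with any definable $f$, e.g.\ constant $0$.

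The main (mild) obstacle is the bookkeeping around $\infty$ and the degenerate case $n=0$. In that case the formula does not depend on $x$ and is isolated by $x=x$. I would organize this bookkeeping by a finite case split coded via Remark \ref{remark: facts about relative USHDs}(iii).
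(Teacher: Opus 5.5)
Your proposal is correct and follows the paper's proof. For $x$ of sort $\mathbf k$ the paper uses exactly the still $x=u_{\mathbf k}$ witnessed by $f(x)=x$, and for $x$ of sort $\mathbf\Gamma$ it appeals to distality of the induced divisible ordered abelian group structure (with the extra element $\infty$), via quantifier elimination and o-minimality; your reduction to $nx\,\square\,\rho(y)$, the nearest-point interval argument, and the finite case split for $\infty$ just unpack that citation explicitly.
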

\begin{proof}
    In case $x$ is a variable of the sort $\mathbf{k}$, the formula $\psi(x;z;u_{\mathbf k}):\equiv x=u_{\mathbf k}$, where $z$ is just an empty variable, is a $\mathbf{k}$-still for $\varphi$.
    Suppose $x$ is a variable of the sort $\Gamma$.
    Then the statement follows from the fact that $\varphi(x,y)$ is quantifier-free and that the induced structure in $\ACVF$ on $\Gamma$ is that of a divisible ordered abelian group (together with a certain distinguished element $\infty$), and this is distal, by quantifier-elimination and distality of o-minimal structures.
\end{proof}

It remains to prove that any atomic $L_{\Gamma ,k}$-formula $\varphi(x_{\mathbf K};y)$, where $x_{\mathbf K}$ is a unary variable of the sort $\mathbf{K}$, has a $\mathbf{k}$-still in $\ACVF$.
As described above, this reduces to two cases, (\ref{formula form 2}) and (\ref{formula form 3}).
We begin by proving relative distality for formulas of the form $\varphi_{f,g,\sigma,\tau}$ as in \eqref{formula form 2}.
For this, we fix multi-variables $y_{\mathbf K}$ and $y_{\mathbf \Gamma}$ and a unary variable $x_{\mathbf K}$.
We fix polynomials $f(x_{\mathbf K},y_{\mathbf K})$ and $g(x_{\mathbf K},y_{\mathbf K})$ in $\mathbb Z[x_{\mathbf K},y_{\mathbf K}]$ and $L_\Gamma$-terms $\sigma(y_{\mathbf \Gamma})$ and $\tau(y_{\mathbf \Gamma})$.
\begin{prop}\label{prop: relative distality of valuation formulas}
    Formulas of the form $\varphi_{f,g,\sigma,\tau}(x_{\mathbf{K}};y_{\mathbf{K}},y_{\mathbf \Gamma})$ have $\mathbf k$-stills in $\ACVF$.
\end{prop}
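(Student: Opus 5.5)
The plan is to follow the route announced before the statement, adapting \cite[Lemma 5.5]{bays_martin}. I would first factor $f$ and $g$ over the algebraic closure. Work in $K\models\ACVF$ and fix $b\in K^{y_{\mathbf K}}$. Since $K$ is algebraically closed, we can write $f(x,b)=\alpha\prod_{i}(x-\beta_i)$ and $g(x,b)=\alpha'\prod_j(x-\beta'_j)$. The number of roots is at most $\deg_x f+\deg_x g$. The roots lie in $\acl(b)$ and are picked out by a uniformly algebraic formula $\chi(z;w)$ in $b$; the degenerate cases where $f(x,b)$ or $g(x,b)$ is identically zero are handled by coding, Remark \ref{remark: facts about relative USHDs}(iii). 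Then
\[v(f(x,b))=v(\alpha)+\sum_i v(x-\beta_i),\]
and similarly for $g$. Consequently, for $\gamma\in K^{y_{\mathbf \Gamma}}$, the truth value of $\varphi_{f,g,\sigma,\tau}(a;b,\gamma)$ is determined by the tuple $(v(a-\beta))_\beta$, where $\beta$ ranges over the roots associated to $b$, together with finitely many $\Gamma$-parameters depending on $(b,\gamma)$. By Lemma \ref{lemma: algebraic parameters in USHD}, we may use the roots $\beta$ as parameters. It therefore suffices to isolate, uniformly in finite $R\subseteq K$ (the set of all roots for all $(b,\gamma)\in B$), the type of $a$ with respect to the quantities $v(x-\beta)$. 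Concretely, I would isolate the relation between $v(a-\beta)$ and the values $\delta$, for $\beta\in R$ and $\delta$ ranging over a finite set $\Delta$ of $\Gamma$-values definable from $B$.

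The second step is a Swiss cheese decomposition (Lemma \ref{lemma: swiss cheeses}): the truth of each instance is determined by membership of $a$ in balls $\{x:v(x-\beta)>\delta\}$ or $\{x: v(x-\beta)\ge\delta\}$ with $\beta\in R$ and $\delta\in\Delta$. The ordering of the finite set of values $v(a-\beta)$ relative to $\Delta$ is handled by distality of the value group, via Lemma \ref{lemma: relative distality of k and Gamma formulas} transported along $x\mapsto v(x-\beta)$. This reduces the problem to a $\mathbf k$-still for containment in valuative balls (Lemma \ref{lemma: containment has USHD}).

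The containment lemma is the main obstacle, and it is exactly where the residue field has to enter. Given $a$ and finitely many balls, let $B_0$ be the smallest ball in the family that contains $a$, say with centre $\beta$ and radius $\delta$. Then $a$ lies in $B_0$ but in none of the finitely many maximal proper subballs of $B_0$ from the family. These subballs are closed-ball residue classes: each is determined by $\Res(x-\beta,\pi)$ for an element $\pi$ of value $\delta$, where $\pi$ can be taken as a difference of two roots, or obtained otherwise from the data in an algebraic way. So $\tp(a/B)$ is isolated by the formula
\[v(x-\beta)\ge\delta\wedge\Res(x-\beta,\pi)=\Res(a-\beta,\pi),\]
together with the condition that $v(x-\beta)$ is comparable to $\delta$ in the same way as $v(a-\beta)$. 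The value $\Res(a-\beta,\pi)$ is given by the definable map $f(a,c)$ into $\mathbf k$. This isolates the type, because any $x'$ with the same residue lies in exactly the same maximal subball, so it has the same membership in all balls of the family: the balls either contain $B_0$, or are disjoint from it, or are contained in one of its subballs. The delicate point is that when $\delta$ is not the valuation of a difference of roots, it comes only from $\Gamma$-parameters. In that case I would either use open-ball membership $v(x-\beta)>\delta$, which needs no residue, or note that the only relevant subball is the one containing the roots; a closed ball with no family subball inside it is determined without residues. Finally, I would code the finitely many cases via Remark \ref{remark: facts about relative USHDs}(iii) and combine the pieces with Lemma \ref{lem: Boolean combinations}.
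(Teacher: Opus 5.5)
Your route is the paper's: roots as algebraic centres, the Swiss cheese decomposition of Lemma \ref{lemma: swiss cheeses}, a $\mathbf k$-still for ball containment, and Lemma \ref{lemma: algebraic parameters in USHD} to absorb algebraic parameters. But you skip the one step that makes this proposition more than a concatenation of the two lemmas.

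The residue-field witness produced by the containment still is $\Res(a-c,c'-c)$, where $c,c'$ are roots of the polynomials attached to some $b,b'\in B$. These are elements of $\acl(B)$, not of $B$. Definition \ref{def: relative distality on a set} requires the $\mathbf k$-parameter to be $f(a,c)$ with $c$ a tuple from $B$. Lemma \ref{lemma: algebraic parameters in USHD} only relaxes this to $d\in\chi_{\mathbf k}(\mathbf k;\rho(a,\bar b))$, where $\bar b$ is from $B$, $\rho$ is a definable map into a power of $\mathbf k$, and $\chi_{\mathbf k}$ is uniformly algebraic with all its variables in $\mathbf k$. That lemma lets the parameters of $\psi$ be algebraic over $\bar b$. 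It does not let you evaluate the $\mathbf k$-valued map at those algebraic parameters, so your sentence ``the value $\Res(a-\beta,\pi)$ is given by the definable map $f(a,c)$'' meets neither hypothesis.

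The paper fills this by writing the finite set of candidate residues as $\chi(\mathbf k;a,b,b')$, where $\chi(u_{\mathbf k};x,y,y')$ existentially quantifies over the roots. It then uses quantifier elimination (Fact \ref{fact: QE in ACVF}) to rewrite $\chi$ as $\chi_{\mathbf k}(u_{\mathbf k};\rho(x,y,y'))$. You could instead code that finite set by the coefficients of $\prod_r(X-r)$, which are $\mathbf k$-valued and definable from $(a,b,b')$ without enumerating the roots. Either way, some such argument is needed.

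Your sketch of the containment still is also wrong as stated; you should just cite Lemma \ref{lemma: containment has USHD}. The maximal proper family subballs of $B_0$ need not be residue classes of $B_0$, there may be unboundedly many of them, and the residue class of $a$ may contain one of them. For example, take the family $b^f(0,0)$, $b^f(0,\gamma)$, $b^f(1,\gamma)$ with $\gamma>0$, and $0<v(a)<\gamma$. Then $B_0=b^f(0,0)$, and $\pi=1$ is a difference of centres of valuation $\delta=0$. Your formula becomes $v(x)\ge 0\wedge\res(x)=0$, which holds on all of $b^f(0,\gamma)$ although $a\notin b^f(0,\gamma)$.

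The paper avoids this by closing the family under joins and using Lemma \ref{lemma: facts about balls}(iv). This turns ``outside every family subball'' into ``outside one ball given by two parameters'' (Cases 2 and 3.1). The residue formula is reserved for the case where the residue class of $a$ contains no family centre (Case 3.2). Your appeal to distality of $\Gamma$ transported along $x\mapsto v(x-\beta)$ does no work either, since $\beta$ ranges over an unbounded set; comparing $v(a-\beta)$ with $\delta$ is just ball membership.
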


To prove Proposition \ref{prop: relative distality of valuation formulas}, it will be useful to introduce the notion of valuative balls, which we just refer to as \emph{balls}.
Given $c\in K$ and $\nu\in \Gamma$, we call the set $b^{o}(c,\nu):=\{x\in K:v(x-c)>\nu\}$ the \emph{open ball of radius $\nu$ around $c$}. 
Similarly, we call $b^{f}(c,\nu):=\{x\in K:v(x-c)\geq \nu\}$ the \emph{closed ball of radius $\nu$ around $c$}.
Note that having \emph{large} valuation is analogous to having \emph{small} value. Thus, as the \emph{valuative} radius of a ball increases, the ball shrinks.
We write $x\in b^o(y_{\mathbf K},y_{\mathbf \Gamma})$ as shorthand for the formula $v(x-y_{\mathbf K})>y_{\mathbf \Gamma}$, and similarly for $x\in b^f(y_{\mathbf K},y_{\mathbf \Gamma})$.
When $c\in K$ and $d\in K$, we may write $b^o(c,d)$ in place of $b^o(c,v(d))$, and similarly for $b^f(c,d)$.

We record some useful facts about the geometry of valuative balls, which -- as a result of ultrametricity -- behave somewhat unintuitively. 
 For balls $\beta_1$ and $\beta_2$, write $\beta_1\sqcup\beta_2$ for the \emph{join} of $\beta_1$ and $\beta_2$, that is the smallest ball containing $\beta_1$ and $\beta_2$.

\begin{lemma}\label{lemma: facts about balls}
    Let $\beta_1,\ldots,\beta_n$ be balls in a valued field $(K,v)$ and let $\star\in\{o,f\}$.
    \begin{enumerate}
        \item[(i)] Balls cannot intersect non-trivially, i.e. if $\beta_1\cap\beta_2\neq\emptyset$ then $\beta_1\subseteq \beta_2$ or $\beta_2\subseteq \beta_1$.
        
        \item[(ii)] Every point in a ball is a center, i.e. if $c'\in b^{\star}(c,\nu)$ then $b^{\star}(c,\nu)=b^{\star}(c',\nu)$.

        \item[(iii)] Distances between balls are well-defined, i.e. if $\beta_1\cap\beta_2=\emptyset$ then $v(c_1-c_2)=v(c_1'-c_2')=:v(\beta_1-\beta_2)$ for any $c_1,c_1'\in \beta_1$ and $c_2,c_2'\in\beta_2$.
        \item[(iv)]  $\beta_1\sqcup\ldots\sqcup \beta_n$ is either $\beta_i$ or $\beta_i\sqcup\beta_j=b^{f}(c,v(\beta_i-\beta_j))$ for some $i,j\leq n$ and any $c\in\beta_i\cup\beta_j$.
    \end{enumerate}
\end{lemma}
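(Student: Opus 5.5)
All four parts follow from the ultrametric triangle inequality (Definition \ref{definition: valued field} (iii)), together with its standard consequence: if $v(x-y)\neq v(y-z)$, then $v(x-z)=\min(v(x-y),v(y-z))$. I would prove (ii) first, since it feeds into everything else.

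\emph{Step (ii).} Let $c'\in b^\star(c,\nu)$ and $x\in b^\star(c,\nu)$. Then
\[v(x-c')\geq\min(v(x-c),v(c-c')),\]
and both terms are $>\nu$ in the open case and $\geq\nu$ in the closed case. Hence $b^\star(c,\nu)\subseteq b^\star(c',\nu)$. The relation $c'\in b^\star(c,\nu)$ is symmetric in $c,c'$, so the reverse inclusion holds as well.

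\emph{Step (i).} Pick $e\in\beta_1\cap\beta_2$. By (ii) we may write $\beta_i=b^{\star_i}(e,\nu_i)$. For a fixed centre $e$, these balls are linearly ordered by inclusion: compare the radii $\nu_1,\nu_2$, breaking ties with $b^o\subseteq b^f$. This gives $\beta_1\subseteq\beta_2$ or $\beta_2\subseteq\beta_1$.

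\emph{Step (iii).} Suppose $\beta_1\cap\beta_2=\emptyset$, and take $c_1,c_1'\in\beta_1$ and $c_2\in\beta_2$. I claim $v(c_1-c_1')>v(c_1-c_2)$. By (ii), $\beta_1$ is centred at $c_1$ with some radius $\nu_1$, and $v(c_1-c_1')$ is $>\nu_1$ or $\geq\nu_1$ according to the type of $\beta_1$. If instead $v(c_1-c_2)\geq v(c_1-c_1')$, a short case analysis shows $c_2\in\beta_1$, or, arguing symmetrically with $\beta_2$, that $\beta_1\cap\beta_2\neq\emptyset$. Either way this contradicts disjointness. With the claim in hand, the strict ultrametric equality gives $v(c_1'-c_2)=v(c_1-c_2)$. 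Repeating the argument in the second coordinate yields $v(c_1'-c_2')=v(c_1-c_2)$.

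\emph{Step (iv).} First take $n=2$. If $\beta_1\cap\beta_2\neq\emptyset$, then by (i) the join is whichever of the two balls is larger. If the balls are disjoint, set $\gamma:=v(\beta_1-\beta_2)$, which is well defined by (iii). The closed ball $b^f(c,\gamma)$ contains both balls for any $c\in\beta_1\cup\beta_2$. Any ball $\beta\supseteq\beta_1\cup\beta_2$ contains points $c_1\in\beta_1$ and $c_2\in\beta_2$ with $v(c_1-c_2)=\gamma$, so $\beta$ contains $b^f(c_1,\gamma)$. Hence $b^f(c,\gamma)$ is the smallest such ball.

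For general $n$, I would argue by induction, or more directly as follows. The join $\beta$ of all the $\beta_i$ is a ball containing each $\beta_i$. Either $\beta$ equals some $\beta_i$, or $\beta=b^f(c,\gamma)$ where $\gamma$ is the minimum of the pairwise distances $v(\beta_i-\beta_j)$ over disjoint pairs. That minimum is attained by some pair, and for that pair $\beta_i\sqcup\beta_j=b^f(c,\gamma)$ already contains every $\beta_k$.

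The main obstacle is the case bookkeeping in (iii) and (iv). One has to track open versus closed balls carefully, and in (iv) justify that the minimum over all disjoint pairs suffices. For the latter, I would note that every point of $\bigcup_k\beta_k$ lies within valuation $\geq\gamma$ of $c$.
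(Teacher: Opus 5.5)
Your arguments for (i)--(iii) and for the case $n=2$ of (iv) are correct. In (iii), your claim $v(c_1-c_1')>v(c_1-c_2)$ is exactly what justifies the paper's one-line computation $\min(v(c_i-c_i'),v(c_i'-c_j'),v(c_j'-c_j))=v(c_i'-c_j')$. It needs no case analysis: if $v(c_1-c_2)\geq v(c_1-c_1')$, then $c_2\in\beta_1$ directly, because $\beta_1$ is centred at $c_1$.

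\textbf{The gap is in the general case of (iv).} You assert that $b^f(c,\gamma)$ contains every $\beta_k$, where $\gamma$ is the minimum of $v(\beta_i-\beta_j)$ over \emph{all} disjoint pairs. Your justification, that every point of $\bigcup_k\beta_k$ has valuation $\geq\gamma$ from $c$, only restates that claim. It is immediate for $\beta_k$ disjoint from $\beta_i$ (by (iii) and minimality) and for $\beta_k\subseteq\beta_i$. It is not immediate for $\beta_k\supsetneq\beta_i$, and there it genuinely needs the hypothesis that no $\beta_k$ contains all the others, which your sketch never uses. For example, take $\beta_1=\{0\}$, $\beta_2=\{t\}$ with $v(t)>0$, and $\beta_3=b^f(0,0)$. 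The only disjoint pair gives $\gamma=v(t)$, yet $1\in\beta_3\setminus b^f(0,v(t))$. So the argument as written cannot be right without invoking which alternative you are in.

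\textbf{How the paper avoids this.} It first passes to the maximal balls of the family. These have the same join and are pairwise disjoint by (i). If only one remains, it is the join. Otherwise, choose the pair $(i,j)$ of maximal balls minimizing $v(\beta_i-\beta_j)$, and take $c\in\beta_i$:
\begin{itemize}
\item For every other maximal $\beta_k$ and $x\in\beta_k$, (iii) gives $v(x-c)=v(\beta_k-\beta_i)\geq\gamma$.
\item Points of $\beta_i$ lie at valuation $>\gamma$ from $c$, by your claim in (iii).
\end{itemize}
Hence everything lies in $b^f(c,\gamma)=\beta_i\sqcup\beta_j$.

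\textbf{Alternative fix keeping your minimum over all disjoint pairs.} You must rule out a point $x\in\beta_k\supsetneq\beta_i$ with $v(x-c)<\gamma$. Take a maximal $M\supseteq\beta_k$ and a maximal $M'$ disjoint from $M$; such $M'$ exists because no single ball contains all the others. For $y\in M'$, since $x,c\in M$ and $y\notin M$, you get $v(y-c)<v(x-c)<\gamma$. Then $(\beta_i,M')$ is a disjoint pair at distance $<\gamma$, contradicting minimality.
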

\begin{proof}
    (i) follows from the ultrametric triangle inequality and immediately implies (ii). For (iii), by the ultrametric triangle inequality,
    \[v(c_i-c_j)=v(c_i-c_i'+(c_i'-c_j')+c_j'-c_j)=\min(v(c_i-c_i'),v(c_i'-c_j'),v(c_j'-c_j))=v(c_i'-c_j').\]
    Finally, for (iv), if some $\beta_i$ contains all $\beta_j$, then $\beta_1\sqcup\dots\sqcup\beta_n=\beta_i$. Otherwise, we may assume by (i) that $\beta_1,\ldots,\beta_n$ are disjoint. Find $i,j\leq n$ such that $v(\beta_i-\beta_j)$ is minimal. Then $\beta_1\cup\ldots\cup\beta_n\subseteq \beta_i\sqcup \beta_j$ by the ultrametric triangle inequality, so $\beta_i\sqcup\ldots\sqcup\beta_n\subseteq\beta_i\sqcup\beta_j$. On the other hand, clearly $\beta_i\sqcup\beta_j\subseteq\beta_1\sqcup\ldots\sqcup\beta_n$.
\end{proof}

\begin{lemma}\label{lemma: swiss cheeses}
    There exist $L_{\Gamma ,k}$-formulas $\chi_1(u_{\mathbf K};y_{\mathbf K})$ and $\chi_2(u_{\Gamma};y_{\mathbf K},y_{\mathbf \Gamma})$, uniformly algebraic in $\ACVF$, such that for any $(K,v)\models \ACVF$ and $(b,\mu)\in K^{|y_{\mathbf K}|}\times \Gamma^{|y_{\mathbf \Gamma}|}$, the set $\varphi_{f,g,\sigma,\tau}(K;b,\mu)$ is a Boolean combination of balls
    with center in $\chi_1(K;b)$ and radius in $\chi_2(\Gamma;b,\mu)$.
\end{lemma}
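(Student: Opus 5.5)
The plan is to factor $f$ and $g$ over the algebraic closure and express the valuation of each factor in terms of distances to roots. Fix $(K,v)\models\ACVF$ and parameters $(b,\mu)$. Write $f(x,b)=\alpha\prod_{i}(x-r_i)$ and $g(x,b)=\beta\prod_j(x-s_j)$, where the $r_i,s_j\in K$ are the roots; this uses that $K$ is algebraically closed. The degrees in $x$ are bounded by those of $f,g$, and the degenerate cases where $f(x,b)$ or $g(x,b)$ is identically zero (or has lower degree) are handled by finitely many case distinctions on the leading coefficients.

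Let $\chi_1(u;y_{\mathbf K})$ say that $u$ is a root of the nonzero polynomial obtained from $f(x,y_{\mathbf K})$ or from $g(x,y_{\mathbf K})$. This is a quantifier-free disjunction over which coefficient is the first nonzero one, and it is uniformly algebraic of degree at most $\deg_x f+\deg_x g$. For a fixed $b$, the map
\[x\mapsto v(f(x,b))+\sigma(\mu)-v(g(x,b))-\tau(\mu)\]
equals $\sum_i v(x-r_i)-\sum_j v(x-s_j)+\gamma_0$, where $\gamma_0=v(\alpha)-v(\beta)+\sigma(\mu)-\tau(\mu)$.

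The key step is to show that this function is constant on each cell of a partition of $K$ determined by balls with centers among the roots and radii among finitely many definable values. Let $R=\{r_i\}\cup\{s_j\}$. For $x\notin R$, let $c\in R$ be a root nearest to $x$ and put $\delta=v(x-c)$. Then for every $e\in R$ the ultrametric inequality gives $v(x-e)=\min(\delta,v(c-e))$. So the function is
\[F_c(\delta):=\gamma_0+\sum_i \min(\delta,v(c-r_i))-\sum_j\min(\delta,v(c-s_j)),\]
which is piecewise linear in $\delta$ with integer slopes. Its breakpoints lie in $\{v(c-e):e\in R\}$. For each root $c$ and each interval between consecutive breakpoints, the condition $F_c(\delta)<0$ is an inequality $m\delta<\lambda$ with $m\in\mathbb Z$ of bounded size and $\lambda$ a $\mathbb Z$-combination of $\gamma_0$ and the values $v(c-e)$. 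Its solution set in $\delta$ is therefore an interval whose endpoints are among the breakpoints and the solutions of $m\delta=\lambda$, the latter being definable by divisibility of $\Gamma$.

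Hence $\varphi_{f,g,\sigma,\tau}(K;b,\mu)$ is a finite Boolean combination of sets $\{x: v(x-c)\,\square\,\rho\}$ with $\square\in\{<,\le,>,\ge\}$. Here $c$ ranges over $\chi_1(K;b)$ and $\rho$ over a finite set, together with the requirement that $c$ be a nearest root, i.e.\ $v(x-c)\ge v(x-e)$ for all $e$. That requirement is itself a Boolean combination of balls $b^f(c,v(c-e))$ and $b^o(c,v(c-e))$. Each condition $v(x-c)\,\square\,\rho$ is an open or closed ball, or the complement of one. The points of $R$ themselves are singletons, which we treat as balls of radius $\infty$ (so $\infty$ must be allowed as a radius), or else as degenerate cases of the same description.

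Define $\chi_2(u_\Gamma;y_{\mathbf K},y_{\mathbf\Gamma})$ as the disjunction of the following. First, $u=v(c-e)$ for $c,e\in\chi_1(K;y_{\mathbf K})$. Second, $u=\infty$. Third, $m u=\lambda$, where $m$ ranges over nonzero integers with $|m|\le \deg_x f+\deg_x g$ and $\lambda$ ranges over the finitely many combinations $v(\alpha)-v(\beta)+\sigma(y_{\mathbf\Gamma})-\tau(y_{\mathbf\Gamma})+\sum_e n_e v(c-e)$ with bounded coefficients. Here $\alpha,\beta$ are the leading coefficients, chosen by the same case split as above. $\chi_2$ is uniformly algebraic because $\chi_1$ is, and because $mu=\lambda$ has a unique solution in a torsion-free group. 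It is a formula because we quantify existentially over the boundedly many roots.

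The main obstacle is making this uniform and first-order: writing the case split over leading coefficients and root multiplicities inside $\chi_1,\chi_2$ without enumerating roots explicitly. I would handle it by existentially quantifying over tuples $c,e$ satisfying $\chi_1$, which keeps the bound on the number of solutions uniform in $(b,\mu)$.
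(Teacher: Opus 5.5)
Your argument is correct. Its outer structure matches the paper's: factor over $K$, reduce to an inequality between integer combinations of the $v(x-\gamma)$ for roots $\gamma$, and take $\chi_1$ to be the roots and $\chi_2$ to consist of root distances, $\infty$, and the unique solutions of $mu=\lambda$ in the divisible group $\Gamma$.

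The core step is genuinely different. The paper proves by induction on $N$ that $\sum_{i\le N}k_iv(x-c_i)<\nu$ defines a Boolean combination of balls. It splits on the position of $v(x-c_{N+1})$ relative to $v(c_N-c_{N+1})$, and in each case either freezes the last term into a constant or merges it with the $c_N$-term. This yields radii $\frac{1}{\sum_{i\in I_0}k_i}\bigl(\nu-\sum_{j\notin I_0}k_jv(c_{p(j)}-c_{q(j)})\bigr)$ and forces bookkeeping over the shapes $(m,\overline{\ell},I_0,p,q)\in\Delta$.

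You instead localise at a nearest root $c$, where $v(x-e)=\min(\delta,v(c-e))$ holds for all roots $e$ at once. The condition then becomes a piecewise-linear inequality in the single variable $\delta=v(x-c)$. This buys several things:
\begin{itemize}
\item it is non-inductive;
\item it makes the cells explicit: annuli around one root with sub-balls around other roots removed;
\item it gives a simpler family of radii, namely combinations of $\gamma_0$ and distances from one centre;
\item it works verbatim for arbitrary points with integer weights, so it could replace the paper's claim where that claim is reused in Lemma~\ref{lem: linear swiss cheese}.
\end{itemize}

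Two small repairs are needed.

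First, the nearest-root region is not a Boolean combination of balls centred at $c$. Membership in such balls depends only on $v(x-c)$, while this region does not. Instead, for $x\notin R$ one has $v(x-c)\ge v(x-e)$ if and only if $x\notin b^o(e,v(c-e))$. So the region is $\bigcap_{e\in R}\bigl(K\setminus b^o(e,v(c-e))\bigr)$, which is still of the required form since $e\in\chi_1(K;b)$ and $v(c-e)\in\chi_2(\Gamma;b,\mu)$.

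Second, $\gamma_0$ is undefined when $\sigma(\mu)=\infty$ or $\tau(\mu)=\infty$. Split these cases off as the paper does. The set is then $\emptyset$ or $\{x:f(x,b)\neq 0\}$, both covered by singletons $b^f(r,\infty)$ with $r$ a root.
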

\begin{proof}
    Fix $(K,v)\models\ACVF$ and write $\Gamma:=\mathbf \Gamma(K)$, and fix $(b,\mu)\in K^{|y_{\mathbf K}|}\times \Gamma(K)^{|y_{\mathbf \Gamma}|}$.
    We make a case distinction on $K$, $b$, and $\mu$, and prove the statement in each case separately.
    This is sufficient, since finitely many uniformly algebraic formulas can be coded into one uniformly algebraic formula.
    Recall that
    \[a\in \varphi_{f,g,\sigma,\tau}(K;b,\mu)\iff v(f(a,b))+\sigma(\mu)<v(g(a,b))+\tau(\mu).\]
    It follows that if $\sigma(\mu)=\infty$ or $f(x;b)$ is the constant polynomial $0$, then $\varphi(K;b,\mu)=\emptyset$, and the statement follows since $\emptyset=b^o(0,\infty)$.
    Otherwise, if $\tau(\mu)=\infty$ or $g(x;b)$ is the constant polynomial $0$, then $\varphi(K;b,\mu)=K\setminus Z_{f(x;b)}(K)$ where $Z_{f(x;b)}(K)$ is the set of roots of $f(x;b)$. 
    Again, the statement follows since $\{\gamma\}=b^f(\gamma,\infty)$ whenever $\gamma\in K$.
   
    Henceforth, we assume that $\sigma^K(\mu)\neq \infty$ and $\tau^K(\mu)\neq\infty$, and $f(x;b)$ and $g(x;b)$ are not zero everywhere. In particular, we may assume that $\sigma=0$ by replacing $\tau$ with $\tau-\sigma$.
    Let $\gamma_1,\dots,\gamma_m\in K$ be the roots of $f(x;b)\cdot g(x;b)$ and let $t_1(y)$ and $t_2(y)$ be $L_{\text{ring}}$-terms such that
    \[f(x,b)=t_1(b)(x-\gamma_1)^{n_1}\cdot\ldots\cdot(x-\gamma_m)^{n_m}\text{ and } g(x,b)=t_2(b)(x-\gamma_{1})^{n'_{1}}\cdot\ldots\cdot(x-\gamma_m)^{n'_m}\]
    for positive integers $n_i,n_i'\leq\max(\deg_x(f),\deg_x(g))$.
    Note that $t_1(b),t_2(b)\neq 0$ since $f(x;b)$ and $g(x;b)$ are non-zero.
    By Definition \ref{definition: valued field} (i), we have
    \begin{align}\label{eq: linear factors}
        a\in \varphi_{f,g,\sigma,\tau}(x;b,\mu)\iff \sum_{i=1}^{m}\ell_iv(a-\gamma_i)<v\big(\frac{t_2}{t_1}(b)\big)+\tau(\mu),
    \end{align}
    where $\ell_i:=n_i-n_i'$.
    The following claim is strengthened to allow for an inductive proof.
    We use throughout that since $(K,v)$ is an algebraically closed valued field, the group $\Gamma$ is divisible.\\
    
    \underline{Claim:} For $N\in\mathbb N$, $\nu\in v(K)$, $c_1,\ldots,c_N\in K$, and $k_1,\ldots,k_N\in\mathbb Z$, the subset of $K$ defined by $\sum_{i\leq N}k_iv(x-c_i)<\nu$ is equivalent to a Boolean combination of $\emptyset$ and balls $b^\star(c,\eta)$ where $c= c_i$ for some $i\leq N$ and either $\eta=v(c_i-c_j)$ for some $j\leq N$ or 
    \[\eta=\frac{1}{\sum_{i\in I_0}k_i}\cdot\Big(\nu-\sum_{j\in[N]\setminus I_0}k_jv(c_{p(j)}-c_{q(j)})\Big)\]
    for a non-empty set $I_0\subseteq [N]$ with $\sum_{i\in I_0}k_i\neq 0$ and maps $p,q:[N]\setminus I_0\to[N]\setminus I_0$ with $p(i)\neq q(i)$ for each $i$.
    \begin{proof}[Proof of Claim]
        The base case $N=1$ is clear: If $k_1=0$, then the statement is equivalent to $0<\nu$, so the set is either $\emptyset$ or $K$.
        If $k_1>0$, then the set is the complement of the closed ball with center $c_1$ and radius $\frac{\nu}{k_1}$.
        If $k_1<0$, the set is the open ball with center $c_1$ and radius $\frac{\nu}{k_1}$.
       
        We consider the formula $\psi(x):\equiv  \sum_{i\leq N+1}k_iv(x-c_i)<\nu$. We consider a set $X$ fixing a specific order-type of $v(x-c_{N+1})$ over $\{v(c_1-c_{N+1}),\ldots,v(c_N-c_{N+1})\}$ and prove that $\psi(x)$ is a Boolean combination of the claimed form, on the intersection with $X$.
        This suffices, since $X$ is itself a Boolean combination as in the claim. 

    \underline{Case 1:} If $v(x-c_{N+1})=v(c_N-c_{N+1})$, then $\psi(x)$ is equivalent to $\sum_{i\leq N}k_iv(x-c_i)<\nu - k_{N+1}v(c_N-c_{N+1})$ on $X$.

    \underline{Case 2:} If $v(x-c_{N+1})<v(c_N-c_{N+1})$, then $v(x-c_N)=v(x-c_{N+1})$, and so $\psi(x)$ is equivalent to $\sum_{ i< N}k_iv(x-c_i)+(k_N+k_{N+1})v(x-c_{N})<\nu $.

    \underline{Case 3:} If $v(x-c_{N+1})>v(c_N-c_{N+1})$, then $v(x-c_N)=v(c_N-c_{N+1})$, so $\psi(x)$ is equivalent to $\sum_{i<N}k_iv(x-c_i)+k_{N+1}v(x-c_{N+1})<\nu - k_Nv(c_N-c_{N+1})$.

    In each case, the statement follows by the inductive hypothesis.
    \end{proof}
    Applying the claim to (\ref{eq: linear factors}),
    we see that $\varphi_{f,g,\sigma,\tau}(x,b)$ is equivalent to a Boolean combination of balls with a center $\gamma_i$ and with radius of the form $v(\gamma_i-\gamma_j)$ or of the form
    \begin{equation}\label{eq: radius formula}
        \frac{1}{\sum_{i\in I_0}\ell_i}\cdot \left(v\left(\frac{t_1(b)}{t_2(b)}\right)+\tau(\mu)-\sum_{j\in[m]\setminus I_0}\ell_jv\big(\gamma_{p(j)}-\gamma_{q(j)}\big)\right),
    \end{equation}
    where $I_0$, $p$, and $q$ are as in the claim.
    Set $d:=\deg_x(f(x,y))+\deg_x(g(x,y))$. 
    Note that in (\ref{eq: radius formula}), we have $m\in[d]$, $\ell_i\in\{-d,\dots,d\}$, $I_0\subseteq [m]$ with $\sum_{i\in I_0}\ell_i\neq 0$, and $p,q:[m]\setminus I_0\to [m]\setminus I_0$ with $p(i)\neq q(i)$ for each $i$.
    Let $\Delta$ be the set of all tuples $(m,\overline{\ell},I_0,p,q)$ which satisfy these properties.
    Note that $\Delta$ is finite.
    For $\delta=(m,\overline{\ell},I_0,p,q)\in \Delta$, let $s_\delta:={\sum_{i\in I_0}\ell_i}$ and let 
    \[P^+_\delta(\gamma_1,\dots,\gamma_m):=\prod_{j\notin I_0, \ell_j>0}(\gamma_{p(j)}-\gamma_{q(j)})^{\ell_j}\text{ and }P^-_\delta(\gamma_1,\dots,\gamma_m):=\prod_{j\notin I_0, \ell_j<0}(\gamma_{p(j)}-\gamma_{q(j)})^{-\ell_j}.\]
    By additivity of $v$ (Definition \ref{definition: valued field} (i)), the term (\ref{eq: radius formula}) is equal to 
    \begin{equation}\label{eq: simplified radius formula}
        Q_\delta(b,\mu,\gamma_1,\dots,\gamma_m):=\frac{1}{s_\delta}\left(\tau(\mu)+v\left(\frac{t_1(b)P_\delta^-(\gamma_1,\dots,\gamma_m)}{t_2(b)P_\delta^+(\gamma_1,\dots,\gamma_m)}\right)\right).
    \end{equation}
    For each $\delta=(m,\overline{\ell},I_0,p,q)\in\Delta$, let $\chi_2^{\delta}(u_{\mathbf \Gamma};y_{\mathbf K},y_{\mathbf \Gamma},z^1_{\mathbf K},\dots,z^m_{\mathbf K})$ be the $L_{\Gamma ,k}$-formula expressing that $u_{\mathbf \Gamma}=Q_\delta(y_{\mathbf K},y_{\mathbf \Gamma},z_{\mathbf K}^1,\dots,z_{\mathbf K}^m)$ , i.e.
    \[
        \chi_2^{\delta}(u_{\mathbf \Gamma},y_{\mathbf K},y_{\mathbf{\Gamma}},\overline{z}_{\mathbf K}):\equiv\  s_\delta u_{\mathbf \Gamma}+v(t_2(y_{\mathbf K}))+v(P_\delta^+(\overline{z}_{\mathbf K}))-\tau(y_{\mathbf \Gamma})-v(t_1(y_{\mathbf K}))-v(P^-_\delta(\overline{z}_{\mathbf K}))=0. \]

    Now, let $\chi_{1}(u_{\mathbf K},y_{\mathbf K})$ be the $L_{\text{ring}}$-formula
    \[f(u_{\mathbf K},y_{\mathbf K})\cdot g(u_{\mathbf K},y_{\mathbf K})=0\wedge \neg\forall x_{\mathbf K}(f(x_{\mathbf K},y_{\mathbf K})\cdot g(x_{\mathbf K},y_{\mathbf K})=0).\] 
    Then each $\gamma_i$ is a solution to $\chi_1(u_{\mathbf K};b)$ and $\chi_1(u_{\mathbf K};y_{\mathbf K})$ is uniformly algebraic (of degree $\leq d$) in $\ACVF$.
    Finally, we set $\chi_2(u_{\mathbf \Gamma},y_{\mathbf K},y_{\mathbf \Gamma})$ to be the disjunction of the finitely many formulas 
    \begin{itemize}
        \item $\exists z_{\mathbf K}^1,z_{\mathbf K}^2\big(\chi_1(z_{\mathbf K}^1,y_{\mathbf K})\wedge\chi_1(z_{\mathbf K}^2,y_{\mathbf K})\wedge u_{\mathbf \Gamma}=v(z^1_{\mathbf K}-z^2_{\mathbf K})\big)$,
        \item $\exists z_{\mathbf K}^1\dots z_{\mathbf K}^m\Big(\bigwedge_{i=1}^m\chi_1(z^i_{\mathbf K},y_{\mathbf K})\ \wedge\  \bigwedge_{i<j\leq m} z^i_{\mathbf K}\neq z^j_{\mathbf K}\ \wedge\ \tau(y_{\mathbf \Gamma})\neq\infty\wedge\   \chi_2^{\delta}(u_{\mathbf \Gamma},y_{\mathbf K},y_{\mathbf \Gamma},\overline{z}_{\mathbf K})\Big)$,
    \end{itemize}
    with $\delta$ ranging over $\Delta$.
    Together with the claim above, this yields the desired result, since $v(\gamma_i-\gamma_j)\in\chi_2(\Gamma;b,\mu)$ and $Q_\delta(b,\mu,\gamma_1,\dots,\gamma_m)\in\chi_2(\Gamma;b,\mu)$ whenever $i\neq j$ and $\delta\in\Delta$. 
    Moreover, $\chi_2(u_{\mathbf \Gamma};y_{\mathbf K},y_{\mathbf\Gamma})$ is uniformly algebraic, since $\chi_2^\delta(u_{\mathbf \Gamma};y_{\mathbf K},y_{\mathbf \Gamma},\overline{z}_{\mathbf K})$ has a unique solution whenever $\overline{z}_{\mathbf K}\subseteq \chi_1(K;y_{\mathbf K})$ and $\tau(y_{\mathbf \Gamma})\neq\infty$.
\end{proof}


\begin{lemma}\label{lemma: containment has USHD}
    The formulas $x_{\mathbf K}\in b^{o}(y_{\mathbf K},y_{\mathbf \Gamma})$ and $x_{\mathbf K}\in b^{f}(y_{\mathbf K},y_{\mathbf \Gamma})$ have $\mathbf k$-stills in $\ACVF$. 
\end{lemma}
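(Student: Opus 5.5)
Fix $(K,v)\models\ACVF$, $a\in K$ and a finite set $B$ of pairs $(c,\nu)\in K\times\Gamma$ with $|B|\geq 2$. The plan is to isolate $\tp_\varphi(a/B)$, for $\varphi$ either containment formula, by a bounded number of parameters from $B$ together with one residue-field parameter computed from $a$, and then apply Lemma \ref{lemma: algebraic parameters in USHD} and Lemma \ref{lem: Boolean combinations}. It suffices to treat the set of balls $\{b^\star(c,\nu):(c,\nu)\in B\}$: this is a finite family of balls which, by Lemma \ref{lemma: facts about balls}(i), forms a tree under inclusion. So $\tp_\varphi(a/B)$ is determined by two things. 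First, the smallest ball $\beta$ of the family containing $a$ (or the information that $a$ lies in no ball). Second, which maximal proper sub-balls of $\beta$ in the family contain $a$; since $\beta$ is minimal, the answer is none.

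\textbf{Main case.} Let $\beta=b^\star(c_0,\nu_0)$ be the minimal ball containing $a$, and let $\beta_1,\dots,\beta_r$ be the balls of the family properly contained in $\beta$. We need a formula, with parameters from $B$ and $\mathbf k$, saying ``$x\in\beta$ and $x\notin\beta_i$ for all $i$'', and it must use boundedly many parameters. Consider the closed balls $b^f(c,v(a-c))$ for $c$ among the centres of the $\beta_i$. Take $c_1$ among these centres maximizing $v(a-c_1)$, i.e.\ a centre closest to $a$, and set $\eta:=v(a-c_1)$. Then every $\beta_i$ that meets the closed ball $b^f(c_1,\eta)$ lies inside it. The sub-balls of $b^f(c_1,\eta)$ of radius $\eta$ in the open sense correspond to residue classes via $x\mapsto\Res(x-c_1,a-c_1)$. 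Moreover $a$ lies in the open ball $b^o(a,\eta)$, which contains no centre of any $\beta_i$ by maximality of $\eta$, and hence (being a ball disjoint from all $\beta_i$ or containing one) it is disjoint from every $\beta_i$ of radius $>\eta$ in the relevant sense.

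This suggests the still
\[\psi(x;c_0,\nu_0,c_1,\nu_1;u_{\mathbf k}):\equiv x\in b^\star(c_0,\nu_0)\wedge \text{``}v(x-c_1)=v(a-c_1)\text{''}\wedge \Res(x-c_1,\ldots)=u_{\mathbf k},\]
where $f(a,\cdot)=\Res(a-c_1,w)$ for a fixed normalizing element $w$.

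\textbf{Main obstacle.} The difficulty is that $\eta$ and a normalizer of valuation $\eta$ are not given by parameters from $B$. To handle this, I will instead encode $\eta$ via the pair $(c_1,c_2)$ with $v(c_1-c_2)$ minimal over centres: by Lemma \ref{lemma: facts about balls}(iii),(iv) the relevant radius is either some $\nu_i$ or some $v(c_i-c_j)$, so it is definable from two or three tuples of $B$. The residue can be normalized by $c_2-c_1$ rather than by an abstract element of valuation $\eta$. If $a$ is strictly farther from $c_1$ than all the centres, then the condition $v(x-c_1)=v(a-c_1)$ is expressed through comparison with these $B$-definable radii, using the distality of $\Gamma$ (order type of $v(x-c_1)$ over the finitely many $B$-terms, isolated by two neighbours). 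Remaining edge cases—$a$ in no ball, or $a$ in $\beta$ whose sub-balls have all centres at distance exceeding every $\nu$—are handled by separate disjuncts coded together as in Remark \ref{remark: facts about relative USHDs}(iii), each with bounded arity. Residue parameters $\Res(a-c_1,c_2-c_1)$ lie in $\mathbf k(\dcl(Ba))$, as required for a $\mathbf k$-still.
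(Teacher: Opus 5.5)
\textbf{Verdict: there is a genuine gap.} Your reduction to the tree of balls is correct, and so is your observation that $b^o(a,\eta)$, with $\eta=v(a-c_1)$, misses every $\beta_i$. But the step meant to make the still bounded-arity does not work as written.

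The claim that ``the relevant radius is either some $\nu_i$ or some $v(c_i-c_j)$'' is true for balls in the join-closure of the family (Lemma \ref{lemma: facts about balls}(iv)). It fails for $b^f(c_1,\eta)$: $\eta$ need not be of that form. For example, let $\gamma:=v(t)>0$ and take the closed balls $b^f(0,0)$, $b^f(0,20\gamma)$, $b^f(t^{10},20\gamma)$, $b^f(1,20\gamma)$, with any $a$ satisfying $0<v(a)<10\gamma$. Then $c_1=0$ and $\eta=v(a)$, while the only available values are $0,10\gamma,20\gamma$.

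This causes three further problems:
\begin{enumerate}
\item Your normalizer (the pair of centres with $v(c_1-c_2)$ minimal) gives $c_2=1$ here. In general, whenever $v(c_2-c_1)\neq\eta$, the map $x\mapsto\Res(x-c_1,c_2-c_1)$ vanishes on all of $\{x:v(x-c_1)=\eta\}$, so it carries no information.
\item In this example $\eta$ lies strictly between two centre distances from $c_1$. That is neither your ``strictly farther than all the centres'' subcase nor one of your edge cases. Nowhere do you show that, when $\eta$ is not a centre distance, the residue can be dropped and the order type of $v(x-c_1)$ alone isolates $\tp_\varphi(a/B)$.
\item ``$a$ in no ball'' is not a trivial disjunct. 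Without joins, ``$x\notin\beta$ for every maximal $\beta$ of the family'' uses unboundedly many parameters.
\end{enumerate}

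\textbf{The missing idea} is the correct dichotomy on $\eta$. It is cleanest if you choose $c_1$ closest to $a$ among \emph{all} centres occurring in $B$, with no minimal ball $\beta$.
\begin{itemize}
\item If $a=c_1$, use $x=c_1$.
\item If $\eta=v(c_2-c_1)$ for some centre $c_2$, then $\Res(x-c_1,c_2-c_1)=\Res(a-c_1,c_2-c_1)\neq 0$ defines exactly $b^o(a,\eta)$. On this ball $v(x-c_j)=v(a-c_j)$ for every centre $c_j$, since $v(x-a)>\eta\geq v(a-c_j)$.
\item Otherwise, let $S:=\{v(c_j-c_1)\}_j\cup\{\nu_j\}_j$. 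If $v(x-c_1)$ has the same order type over $S$ as $\eta$, then $v(x-c_1)\neq v(c_j-c_1)$ for every $j$, hence $v(x-c_j)=\min(v(x-c_1),v(c_j-c_1))$. Comparing with $\nu_j\in S$ shows that $x$ and $a$ lie in the same balls. So $\mu_1<v(x-c_1)<\mu_2$ (or $v(x-c_1)=\mu$), with neighbours $\mu_i\in S$, isolates the type without any residue parameter.
\end{itemize}
This gives a piecewise $\mathbf k$-still of arity at most three and covers ``$a$ in no ball'' uniformly.

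This repaired argument is a leaner route than the paper's. The paper closes the family under joins precisely so that the minimal ball containing $a$ has radius $\nu_i$ or $v(c_i-c_j)$. It then separates two cases: the residue class of $a$ in that join-ball contains a centre (Case 3.1, using $\beta^*$ minus the maximal join-ball inside it), or it does not (Case 3.2, the only case needing a residue parameter).
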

\begin{proof}
    Let $(K,\Gamma,k)\models\ACVF$ and fix a finite set $B\subseteq K\times \Gamma$ and a point $a\in K$. 
    Write $\mathcal{B}_0$ for the collection of open and closed balls $b^{\star}(b)$ with $b\in B$. 

    Denote by $\mathcal{B}$ the closure of $\mathcal{B}_0$ under joins. 
    Thus, by Lemma \ref{lemma: facts about balls} (iv), each ball in $\mathcal{B}$ is either in $\mathcal{B}_0$ or of the form $b^{f}(b_K,b'_K-b_K)$ for some $b,b'\in B$.
    We will use this fact repeatedly in the arguments below.
    For notational convenience, we write $x$ for $x_\mathbf{K}$, and we let $p(x)$ denote the ``containment-type" of $\tp_\in(a/\mathcal{B})$.\footnote{Formally, to make sense of this expression, we add an imaginary sort $\mathbf B$ for open and closed balls and define a suitable relation $\in$ on $\mathbf K\times \mathbf B$. This is explained in Remark \ref{remark: about containment and k-stills} (iii).}
    \\
    We make a case distinction for $a$ and $B$ and prove that in each case, $p(x)$ is isolated by an instance of a formula $\psi(x;c;d)$ where $c$ is a tuple in $B$ and $d$ is of the form $f(a,c)$ for some definable map $f$ with codomain $\mathbf k^{|d|}$.
    Importantly, $\psi$ and $f$ will not depend on $a$ or $B$, but just the case.
    This gives a piecewise $\mathbf{k}$-still for both formulas $x_{\mathbf K}\in b^{o}(y_{\mathbf K},y_{\mathbf \Gamma})$ and $x_{\mathbf K}\in b^{f}(y_{\mathbf K},y_{\mathbf \Gamma})$, which can be coded  into a single $\mathbf{k}$-still as described in Remark \ref{remark: facts about relative USHDs} (iii).\newline

    \underline{Case 1:} $a\notin \beta$ for any $\beta\in \mathcal{B}$.
    Then let $\beta^1$ be the join of all balls in $\mathcal{B}$. 
    The assertion $x\notin \beta^1$ already isolates $p(x)$, and can be written as a formula of one of the forms
    \[\neg\big(x\in b^o(b^1)\big),\ \neg\big(x\in b^f(b^1)\big),\ \neg\big(x\in b^f\big(b^1_K,b^2_K-b^1_K\big)\big),\]
    with $b^1,b^2\in B$.
    \\

    Note that if $a$ is contained in some ball in $\mathcal{B}$, then there is a unique minimal one that it is contained in, by Lemma \ref{lemma: facts about balls} (i).\newline 
    
    \underline{Case 2:} Suppose the minimal ball $\beta^1\in\mathcal{B}$ with $a\in\beta^1$ cannot be written as $\beta\sqcup\beta'$ for distinct $\beta,\beta'\in\mathcal{B}_0$.

    Then, if we let $\beta^2$ be the join of all balls in $\mathcal{B}_0$ which are strictly contained in $\beta^1$, it follows that $p(x)$ is isolated by the assertion $x\in \beta^1\wedge x\notin \beta^2$.
    Now, depending on the form of $\beta^1$ and $\beta^2$, this expression is equivalent to a formula of one of the forms
    \[\big(x\in b^{\star}(b^1)\wedge x\notin b^{\bullet}(b^2)\big),\ \big(x\in b^\star(b^1)\wedge x\notin b^f\big(b^2_K,b^3_K-b^2_K\big)\big),\]
    with $b^1,b^2,b^3\in B$ and $\star,\bullet\in\{o,f\}$.\\

    \underline{Case 3:} Suppose the minimal ball $\beta^1\in \mathcal{B}$ containing $a$ is $\beta\sqcup\beta'$ for distinct $\beta,\beta'\in\mathcal{B}_0$.

    Then let $b,b'\in B$ be the tuples defining $\beta$ and $\beta'$, so that $\beta^1=b^f(b_K,b_K'-b_K)$. 

    \underline{Case 3.1:}
    Suppose first that $a$ is contained in a ball of the form $\beta^*=b^o(b_K^*,b_K-b'_K)$ for some $b^*\in B$ with $b^*_K\in \beta^1$. 
    Then, either $v(b_K^*-b_K)=v(b_K'-b_K)$ or $v(b_K^*-b_K')=v(b_K'-b_K)$, by the ultrametric triangle inequality. 
    Assume without loss of generality that the former holds, so that $\beta^*=b^o(b_K^*,b_K-b_K^*)$.
    Next, find the unique maximal ball $\beta'$ in $\mathcal{B}(B)$ which is strictly contained in $\beta^*$.
    This ball exists, since $\mathcal{B}(B)$ is closed under joins and $\beta^*$ is open. 
    Now, $p(x)$ is isolated by the assertion $x\in\beta^*\wedge x\notin \beta'$. 
    Since any center of $\beta'$ is a center of $\beta^*$ by Lemma \ref{lemma: facts about balls} (ii), this assertion is equivalent to a formula of the form
    \[\big(x\in b^o(b_K^*,b_K-b_K^*)\wedge x\notin b^\star(b^2)\big),\ \big(x\in b^o(b_K^*,b_K-b_K^*)\wedge x\notin b^f(b^*_K,b^2_K-b_K^*)\big),\]
    with $\star\in\{o,f\}$ and $b^2\in B$.

    \underline{Case 3.2:}
    Suppose now that $a$ is not contained in any ball of the form $b^o\big(b_K^*,b_K-b_K')$ with $b^*\in B$ and $b^*_K\in \beta^1$.
    Then $p(x)$ is isolated by the formula
    \[\res\Big(\frac{x-b_K}{b_K'-b_K}\Big)=\alpha,\]
    where $\alpha:=\res\Big(\frac{a-b_K}{b_K'-b_K}\Big)$.
    Indeed, if $a'\in K$ satisfies the above formula, then $v(a'-a)>v(b'_K-b_K)$, so in particular, $a'\in b^f(b_K,b'_K-b_K)$ and moreover, $a'\notin b^o(b^*_K,b_K-b'_K)$ for any $b^*\in B$ with $b^*_K\in \beta^1$.
\end{proof}

\begin{remark}\label{remark: about containment and k-stills}
\begin{enumerate}

    \item[(i)] Lemma \ref{lemma: containment has USHD} holds in fact in the theory of all valued fields, in place of $\ACVF$, by the same argument.
    The proof does not use algebraic closedness.

    \item[(ii)] Given a valued field $(F,v)$ with finite residue field and the collection $\mathcal{B}$ of all valuative balls in $F$, the $\mathbf k$-still coming from Lemma \ref{lemma: containment has USHD} induces a DCD $\mathcal{T}$ of exponent $\leq 3$ for the containment-relation $E\subseteq F\times \mathcal{B}$.
    This is because the case distinction in the above proof gives a piecewise $\mathbf{k}$-still of arity three.
    Now $\mathcal{T}$ can be modified by removing unnecessary cells in order to obtain a DCD of exponent two of $E$.
    Indeed, the only part where three parameters appear is the second formula defined in Case 2 and Case 3 of the proof. 
    However, given a finite set $\mathcal{B}_0$ of balls, after fixing a ball $\beta^1\in\mathcal{B}_0$, a cell of the form $\beta^1\setminus(\beta^2\sqcup\beta^3)$ should be included in $\mathcal{T}(B)$ only if $\beta^2,\beta^3\in\mathcal{B}_0$ are such that $\beta^2\sqcup\beta^3$ is the join of all balls in $\mathcal{B}_0$ which are strictly contained in $\beta^1$. 
    Similarly, after fixing two balls $\beta^1$ and $\beta^3$ in $\mathcal{B}_0$, a cell of the form $(\beta^1\sqcup\beta^2)\setminus\beta^3$ or $(\beta^1\sqcup\beta^2)\setminus(\beta^3\sqcup\beta^1)$ should only be included if $\beta^3$ or $\beta^3\sqcup\beta^1$, respectively, is the join of all balls in $\mathcal{B}_0$ which are strictly contained in $\beta^1\sqcup\beta^2$.

    \item[(iii)] Lemma \ref{lemma: containment has USHD} is still valid at the level of balls; if we let $\acvfball$ be the expansion of $\ACVF$ by the imaginary sort of all balls, then the proof of the lemma shows that the formula $x\in u_{\text{bl}}$, with $u_{\text{bl}}$ of the ball-sort, has a $\mathbf{k}$-still in $\acvfball$.
    Firstly, the join-operation on the ball-sort is defined, by Lemma \ref{lemma: facts about balls} (iv).
    Secondly, if $\beta_1$ and $\beta_2$ are non-intersecting balls and $a$ is a point, then for any points $b_1,b_1'\in\beta_1$ and $b_2,b_2'\in\beta_2$, we get
    \[\res\big(\frac{a-b_1}{b_2-b_1}\big)=\res\big(\frac{a-b_1'}{b_2'-b_1'}\big),\]
    so the expression $\res(\frac{a-\beta_1}{\beta_2-\beta_1})$ is well-defined.
    Indeed, this follows from the fact that
    \[v\big(\frac{b_2-b_1}{b_2'-b_1}-1\big)=v\big(\frac{b_2-b_2'}{b_2'-b_1}\big)>0\]
    and so 
    \[\res\big(\frac{b_2-b_1}{b_2'-b_1}\big)=\res\big(\frac{b_2-b_1}{b_2-b_1'}\big)=1.\]
\end{enumerate}
\end{remark}

We are now ready to prove Proposition \ref{prop: relative distality of valuation formulas}, stating that formulas of the form $\varphi_{f,g,\sigma,\tau}$ as defined in (\ref{formula form 2}) have $\mathbf{k}$-stills in $\ACVF$.

\begin{proof}[Proof of Proposition \ref{prop: relative distality of valuation formulas}]
    Write $\varphi:\equiv \varphi_{f,g,\sigma,\tau}$.
    Let $(K,\Gamma,k)\models \ACVF$
    and write $y=(y_{\mathbf K},y_{\mathbf \Gamma})$.
    By Lemma \ref{lemma: swiss cheeses}, there are formulas $\xi_1(s_{\mathbf K},y_{\mathbf K})$ and $\xi_2(s_{\mathbf \Gamma},y)$, both uniformly algebraic in $\ACVF$, such that for any $a\in K$ and $B\fin K^y$, the type $\tp_{\varphi}(a/B)$ is isolated by the containment-type of $a$ over the set of all open and closed balls with a center  $c$ in $\xi_1(K;b_{\mathbf K})$ and radius $\gamma$ in $\xi_2(\Gamma;b)$ for some $b:=(b_{\mathbf K},b_{\mathbf{\Gamma}})\in B$.

    By Lemma \ref{lemma: containment has USHD}, there is a formula $\psi(x;z;u_{\mathbf k})$, where $z$ is a concatenation of disjoint copies of $(s_{\mathbf K},s_{\mathbf \Gamma})$, which do not depend on $a$ and $B$, such that for some tuple $f$ from $\bigcup_{b\in B}(\xi_1(K;b_{\mathbf K})\cup\xi_2(\Gamma;b))=:\xi(B)$ and some $\alpha\in \mathbf{k}(\dcl(\xi(B)a)$, the formula $\psi(x;f;\alpha)$ isolates this containment-type, and therefore isolates $\tp_\varphi(a/B)$.
    More specifically, the proof of Lemma \ref{lemma: containment has USHD} shows that $\alpha$ is of the form $\res_v\big(\frac{a-c}{c'-c}\big)$ for some $c\in\xi_1(K;b_{\mathbf K})$ and $c'\in\xi_1(K;b'_{\mathbf K})$ with $b,b'\in B$.
    Therefore, defining $\chi(u_{\mathbf{k}};x,y,y')$ to be the formula
    \[\exists s_{\mathbf{K}},s_{\mathbf{K}}'\big(\xi_1(s_{\mathbf K},y_{\mathbf K})\wedge\xi_1(s_{\mathbf K}',y_{\mathbf K}')\wedge u_{\mathbf{k}}=\Res(x-s_{\mathbf{K}},s_{\mathbf{K}}'-s_{\mathbf K})\big),\]
    we see that $\alpha\in\chi(\mathbf k(K);a,b,b')$.
    By quantifier elimination (Fact \ref{fact: QE in ACVF}), there is a formula $\chi_{\mathbf k}(u_{\mathbf{k}};w_{\mathbf{k}})$ and a definable map $\rho(x,y,y')$ with codomain $\mathbf{k}^{|w_{\mathbf k}|}$ such that 
    \[\ACVF\vdash \chi(u_{\mathbf k};x,y,y')\leftrightarrow \chi_{\mathbf k}(u_{\mathbf{k}},\rho(x,y,y')).\]
   Thus, Lemma \ref{lemma: algebraic parameters in USHD} applies, and we conclude that $\varphi(x;y_\mathbf{K},y_{\mathbf \Gamma})$ has a $\mathbf{k}$-still in $\ACVF$.
\end{proof}

We now turn to formulas of the form $\varphi_{\overline{f},\overline{g},P}(x_{\mathbf K};y_{\mathbf K},y_{\mathbf k})$, as defined in \eqref{formula form 3}. 
We fix tuples  $\overline{f}=(f_1,\dots,f_N)$ and $\overline{g}=(g_1,\dots,g_N)$ of polynomials in $\mathbb Z[x_{\mathbf K},y_{\mathbf K}]$ and a polynomial $P(u_{\mathbf k})$ in $\mathbb Z[u_{\mathbf k}]$, where $|u_{\mathbf k}|=N+|y_{\mathbf k}|$.
Recall that the formula $\varphi_{\overline{f},\overline{g},P}(x_{\mathbf K};y_{\mathbf K},y_{\mathbf k})$ is defined as 
\[P\Big(\Res\big(\overline{f},\overline{g}\big)(x_{\mathbf K},y_{\mathbf K}),y_{\mathbf k}\Big)=0.\]

\begin{prop}\label{prop: res formulas are relatively distal}
    The formula $\varphi_{\overline{f},\overline{g},P}(x_{\mathbf K};y_{\mathbf K},y_{\mathbf k})$ as in (\ref{formula form 3}) has a $\mathbf k$-still in $\ACVF$.
\end{prop}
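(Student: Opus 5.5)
The plan is to reduce to Lemma \ref{lemma: algebraic parameters in USHD} by showing that, for $a\in K$ and $B\fin K^{y_{\mathbf K}}\times\mathbf k^{y_{\mathbf k}}$, the tuple of residues $\big(\Res(\overline f,\overline g)(a,b_{\mathbf K})\big)_{b\in B}$ is determined by a formula, independent of $a$ and $B$, with parameters from $B$ (or uniformly algebraic over $B$) and from $\mathbf k(\dcl(Ba))$. Once the family of values $\Res(f_i,g_i)(a,b_{\mathbf K})$ is pinned down, $\tp_\varphi(a/B)$ is determined, since whether $P(\Res(\overline f,\overline g)(a,b_{\mathbf K}),b_{\mathbf k})=0$ depends only on these values and on $b_{\mathbf k}$. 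So the real task is to show that the map $x\mapsto \Res(f_i(x,b),g_i(x,b))$ has a fiber over $\Res(f_i(a,b),g_i(a,b))$ that can be cut out uniformly, simultaneously for all $b\in B$ and $i\le N$.

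To carry this out, I will factor $f_i(x,b)$ and $g_i(x,b)$ into linear factors over the roots $\gamma_1,\dots,\gamma_m\in\xi_1(K;b)$, the uniformly algebraic root formula from Lemma \ref{lemma: swiss cheeses}. Whether $\Res(f_i,g_i)(a,b)$ is nonzero depends on comparing $v(f_i(a,b))$ and $v(g_i(a,b))$, which is a formula of type \eqref{formula form 2}. By Proposition \ref{prop: relative distality of valuation formulas} and Lemma \ref{lem: Boolean combinations}, it therefore has a $\mathbf k$-still, and I take it as the first conjunct. On the cell it isolates, the containment type of $a$ over the balls of Lemma \ref{lemma: swiss cheeses} is fixed. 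For each root $\gamma_j$ there is then a ball $\beta^1$ from the closure $\mathcal B$ of Lemma \ref{lemma: containment has USHD}, determined by that type, such that $a-\gamma_j$ can be written as $(c'-c)\cdot\big(\tfrac{a-c}{c'-c}+\tfrac{c-\gamma_j}{c'-c}\big)$ with $c,c'$ centres from $\xi_1(K;B)$. In each case of Lemma \ref{lemma: containment has USHD}, the leading residue of $a-\gamma_j$ relative to a parameter of the same valuation is then either an element of $\mathbf k(\dcl(\xi_1(B)))$ (when $a$ lies outside the relevant open ball) or the affine expression $\res\big(\tfrac{a-c}{c'-c}\big)+\res\big(\tfrac{c-\gamma_j}{c'-c}\big)$. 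Taking products, $\Res(f_i,g_i)(a,b)$ becomes a rational function over $\mathbf k$, with coefficients residues of ratios of elements of $\xi_1(K;B)$, evaluated at the finitely many residues $\alpha=\res\big(\tfrac{a-c}{c'-c}\big)$.

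The key step is to include in the still the conjunct fixing each relevant $\res\big(\tfrac{x-c}{c'-c}\big)=\alpha$, with $\alpha\in\mathbf k(\dcl(\xi_1(B)a))$, for the boundedly many pairs $(c,c')$ that occur; these are chosen among the parameters of the still, so their number is bounded by the arity. The expected main obstacle is bounding how many distinct residues are needed uniformly in $|B|$. A given $a$ interacts nontrivially with many balls along its chain of containing balls, and the naive approach needs one $\alpha$ per ball. To control this I plan to show that for balls strictly larger than the minimal $\beta^1$ containing $a$, the residue of $a-\gamma$ equals that of $c-\gamma$ for a centre $c\in\beta^1$. Those residues are then $a$-independent, determined by parameters in $B$ alone, and hence fixed by the containment conjunct. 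So only the single minimal ball $\beta^1=b^f(c,c'-c)$ contributes a genuinely new residue $\alpha$, as in Case 3.2 of Lemma \ref{lemma: containment has USHD}. Having verified this, I will pass to formulas over $\mathbf k$ using Fact \ref{fact: QE in ACVF}, exactly as in the proof of Proposition \ref{prop: relative distality of valuation formulas}, and conclude by Lemma \ref{lemma: algebraic parameters in USHD}.
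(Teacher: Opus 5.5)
Your key step has a genuine gap. You assert that the only $a$-dependent residue comes from the minimal ball $\beta^1=b^f(c,c'-c)$, a join whose radius is the distance between two \emph{centres} (roots), normalized as $\res\big(\frac{a-c}{c'-c}\big)$. That is the mechanism of Case~1 of Lemma~\ref{lemma: determining images in the residue field}. It does not handle the configuration where $a$ sits on a sphere around a single cluster $C_0$ of roots, i.e.\ $v(c-c_0)<v(a-c_0)=:r<v(c'-c_0)$ for all roots $c\notin C_0$ and $c'\in C_0\setminus\{c_0\}$.

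No centre lies on the sphere $\{x:v(x-c_0)=r\}$. So every ball centred at a root either contains $b^f(c_0,r)$, is disjoint from it, or lies inside $b^o(c_0,r)$. Hence the containment type, all comparisons of $v(f_\ell(x,b))$ with $v(g_\ell(x,b))$, and every $\Res(x-c,c'-c)$ with $c,c'$ centres are constant on that sphere. Yet for $a'$ on it and any $(\ell,b)$ with $v(f_\ell(a,b))=v(g_\ell(a,b))\neq\infty$,
\[
\Res(f_\ell,g_\ell)(a,b)=\Res(f_\ell,g_\ell)(a',b)\cdot\res\Big(\frac{a-c_0}{a'-c_0}\Big)^{k(\ell,b)},\qquad k(\ell,b)=\sum_{c\in C_0}\big(m^f_c(\ell,b)-m^g_c(\ell,b)\big),
\]
where $m^f_c(\ell,b),m^g_c(\ell,b)$ are the multiplicities of $c$ as a root of $f_\ell(x,b)$, $g_\ell(x,b)$. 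So whenever $k(\ell,b)\neq0$ these residues genuinely vary. Here $r$ is pinned down by the leading coefficients of $f_\ell(x,b),g_\ell(x,b)$ (as in the radii $Q_\delta$ of Lemma~\ref{lemma: swiss cheeses}), rather than by a pair of centres. The simplest instance is $\Res(x,y_{\mathbf K})=y_{\mathbf k}$. For $b\neq0$ the only centre is $0$, so your still has no residue conjunct at all. Every $a'$ with $v(a')=v(b)$ satisfies it, while $\res(a'/b)$ ranges over all of $\mathbf k^\times$.

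The missing ingredient is a way to fix these power-twisted residues with boundedly many parameters taken from $B$ itself. This is exactly the uniformity obstacle you flagged. The paper does it in Case~2 of Lemma~\ref{lemma: determining images in the residue field}:
\begin{enumerate}
\item[(i)] it restricts to the annulus $v(c_1-c_0)<v(x-c_0)<v(c_0'-c_0)$;
\item[(ii)] it adds the $\mathbf k$-still data of Proposition~\ref{prop: relative distality of valuation formulas} for $v(f_\ell)=v(g_\ell)\neq\infty$;
\item[(iii)] it fixes $\Res(f_{\ell_i},g_{\ell_i})(x,b_i)=\Res(f_{\ell_i},g_{\ell_i})(a,b_i)$ for one pair $(\ell_i,b_i)$ per value of $k(\ell,b)$ that occurs.
\end{enumerate}
Since $|k(\ell,b)|\le\deg_x f_\ell+\deg_x g_\ell$, this is a bounded number of parameters. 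Alternatively, you could fix $\res\big(\frac{x-c_0}{e}\big)$, with $c_0$ a root of $f_{\ell_i}(x,b_i)g_{\ell_i}(x,b_i)$ and $e$ satisfying $v(e)=r$, both uniformly algebraic over a single such $b_i$. Then Lemma~\ref{lemma: determining images under the residue map} applies directly. Either way, the normalizer must come from the parameters in $B$, not from a pair of roots. Your other observations are fine: roots outside the minimal ball contribute $a$-independent factors, and the final passage through Fact~\ref{fact: QE in ACVF} and Lemma~\ref{lemma: algebraic parameters in USHD} works.
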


For notational convenience, write $x$ for $x_{\mathbf K}$.
Fix two polynomials $f(x,y_{\mathbf K})$ and $g(x,y_{\mathbf K})$ with integer coefficients. 
We write 
$\rho_{f,g}(x,y_{\mathbf K})$ for the definable map $\Res(f(x_,y_{\mathbf K}),g(x,y_{\mathbf K}))$.
For $b\in K^{|y_{\mathbf K}|}$, we write $Z_{f(x;b)}(K)$ for the set of roots of $f(x;b)$ in $K$.

\begin{lemma}\label{lemma: determining images under the residue map}
    Let $(K,v)\models\ACVF$ and $a,a'\in K$ and $b\in K^{|y_{\mathbf K}|}$. 
    Suppose $v(a'-a)>v(c-a)$ holds for each $c\in Z_{f(x;b)}(K)\cup Z_{g(x;b)}(K)$. 
    Then $\rho_{f,g}^K(a,b)=\rho_{f,g}^K(a',b)$.
\end{lemma}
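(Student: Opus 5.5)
The idea is to factor $f(x;b)$ and $g(x;b)$ into linear factors and check that each factor's value and residue class do not change when $a$ is replaced by $a'$. Since $K$ is algebraically closed, if $f(x;b)$ is not the zero polynomial we write
\[f(x;b)=t\prod_{i}(x-\gamma_i)^{n_i}\]
with $t\in K^\times$ and $\gamma_i\in Z_{f(x;b)}(K)$, and similarly
\[g(x;b)=s\prod_j(x-\delta_j)^{m_j}\]
with $\delta_j\in Z_{g(x;b)}(K)$. The degenerate cases are handled first.

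\emph{Degenerate cases.} If $g(x;b)$ is identically zero, then $\Res(\cdot,0)=0$ at both $a$ and $a'$. If $f(x;b)$ is identically zero (and $g(x;b)$ is not), then $\Res(0,g)$ equals $\res(0)=0$ when $g\neq 0$ and $0$ otherwise, so the value is $0$ at both points. If $f(x;b)$ or $g(x;b)$ is a nonzero constant, it has no roots, and the argument below still applies with the empty product.

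\emph{Key step.} For each root $c$ of $f(x;b)g(x;b)$, the hypothesis $v(a'-a)>v(c-a)$ gives two facts. First, $v(a'-c)=v(a-c)$ by the ultrametric inequality. Second,
\[\frac{a'-c}{a-c}=1+\frac{a'-a}{a-c}\]
has residue $1$. The hypothesis also forces $a\neq c$, since otherwise $v(c-a)=\infty$. Multiplying over all factors, we obtain
\[v(f(a',b))=v(f(a,b)),\quad v(g(a',b))=v(g(a,b)),\quad \res\Big(\frac{f(a',b)}{f(a,b)}\Big)=\res\Big(\frac{g(a',b)}{g(a,b)}\Big)=1.\]
In particular $f(a,b)$ and $g(a,b)$ are nonzero, because $a$ is not a root.

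\emph{Conclusion.} Since $g(a,b)\neq 0$, the case condition $v(f)\ge v(g)$ in the definition of $\Res$ holds at $a$ exactly when it holds at $a'$. If it fails, both values are $0$. If it holds, then
\[\frac{f(a',b)}{g(a',b)}=\frac{f(a,b)}{g(a,b)}\cdot\frac{f(a',b)}{f(a,b)}\cdot\Big(\frac{g(a',b)}{g(a,b)}\Big)^{-1}.\]
All three factors on the right lie in $\mathcal O_v$, and the last two have residue $1$. Since $\res$ is multiplicative on $\mathcal O_v$, the residues of $f(a',b)/g(a',b)$ and $f(a,b)/g(a,b)$ agree.

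The only point needing care is the bookkeeping of these degenerate cases (zero or constant polynomials, and the convention $\Res=0$). The valuation computation itself is routine.
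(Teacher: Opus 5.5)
Your proof is correct and follows essentially the paper's route: factor $f(x;b)$ and $g(x;b)$ into linear factors, note that each ratio $\frac{a'-c}{a-c}$ has valuation $0$ and residue $1$, and use multiplicativity of $\res_v$ to compare the two values of $\Res_v$. Your degenerate cases with $f(x;b)$ or $g(x;b)$ identically zero are in fact vacuous, since then $c=a$ is a root and the hypothesis would require $v(a'-a)>v(a-a)=\infty$.
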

\begin{proof}
    Since $K$ is algebraically closed, the polynomials $f(x;b)$ and $g(x;b)$ decompose into linear factors, say 
    \[f(x,b)=e_1\cdot (x-c_1)\cdot\ldots\cdot(x-c_m),\ g(x,b)=e_2\cdot (x-d_{1})\cdot\ldots\cdot(x-d_n).\]
    Let $C:=\{c_1,\dots,c_m,d_1,\dots,d_n\}$.
    By assumption, for any $c\in C$,
    \[v\Big(\frac{a'-c}{a-c}-1\Big)=v\Big(\frac{a'-a}{a-c}\Big)>0,\]
    and so $\res_v\big(\frac{a'-c}{a-c}\big)=1$ and similarly, $\res_v\big(\frac{a-c}{a'-c}\big)=1$.
    Moreover, we get $v(a-c)=v(a'-c)$ for all $c\in C$ by the ultrametric triangle inequality.
    In particular, if $v\big(f(a,b)\big)\neq v\big(g(a,b)\big)$ then $v\big(f(a',b)\big)\neq v\big(g(a',b)\big)$ and if $g(a,b)=0$ then $g(a',b)=0$. 
    Thus,
    \[\Res_v(f(a,b),g(a,b))=0\implies \Res_v(f(a',b),g(a',b))=0.\]
    Suppose now $v\big(f(a,b)\big)= v\big(g(a,b)\big)\neq\infty$. 
    Then by the above,
    \begin{align*}
        \Res_v\big(f(a',b),g(a',b)\big)&=\res_v\Big(\frac{e_1(a'-c_1)\cdot\ldots\cdot(a'-c_m)}{e_2(a'-d_1)\cdot\ldots\cdot(a'-d_n)}\Big)\\
        &=\res_v\Big(\frac{f(a,b)}{g(a,b)}\cdot \prod_{i=1}^m\frac{a'-c_i}{a-c_i}\cdot \prod_{j=1}^n\frac{a-d_j}{a'-d_j}\Big)\\
        &=\res_v\Big(\frac{f(a,b)}{g(a,b)}\Big)\cdot\prod_{i=1}^m\res_v\Big(\frac{a'-c_i}{a-c_i}\Big)\cdot\prod_{j=1}^n\res_v\Big(\frac{a-d_j}{a'-d_j}\Big)\\
        &=\res_v\Big(\frac{f(a,b)}{g(a,b)}\Big)=\Res_v\big(f(a,b),g(a,b)\big).
    \end{align*}
\end{proof}

Recall $\overline{f}=(f_1,\dots,f_N)$ and $\overline{g}=(g_1,\dots,g_N)$.
For $B\fin K^{|y_{\mathbf K}|}$, we write
\[Z_{\overline{f},\overline{g}}(K;B):=\bigcup_{b\in B,\ell\leq N}Z_{f_\ell(x;b)}(K)\cup Z_{g_\ell(x;b)}(K).\]

In the second and final step towards showing Proposition \ref{prop: res formulas are relatively distal}, we turn now to proving that given $a\in K$ and a finite set $B\subseteq K^{|y_K|}$, the image of $a$ under the tuple of maps $\big(\Res(\overline{f},\overline{g})(x,b)\big)_{b\in B}$ is already determined by the image of $a$ under a map $\pi(x;c)$ into $\mathbf k$ and a formula $\psi(x_{\mathbf K};c)$ where $c$ may come from $B$ and $Z_{\overline{f},\overline{g}}(K;B)$, and $\pi$ and $\psi$ only depend on $\overline{f}$ and $\overline{g}$.
Recall that $x$ is a unary variable of the sort $\mathbf K$.

\begin{lemma}\label{lemma: determining images in the residue field}
    There exists an $L_{\Gamma ,k}$-formula $\psi(x;z_{\mathbf K})$ and an $L_{\Gamma ,k}$-definable map $\pi(x;z_{\mathbf K})$ into a finite power of $\mathbf{k}$, such that for any $(K,v)\models\ACVF$, $a\in K$, and $B\fin K^{|y_K|}$, there is 
    a tuple $c$ from $\bigcup B\cup Z_{\overline{f},\overline{g}}(K;B)$
    such that $\models \psi(a;c)$, and for any $a'\in\psi(K;c)$ with $\pi(a';c)=\pi(a;c)$, we have
    $\Res^K(\overline{f},\overline{g})(a',b)=\Res^K(\overline{f},\overline{g})(a,b)$ for all $b\in B$.
\end{lemma}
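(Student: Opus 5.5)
Let $Z:=Z_{\overline f,\overline g}(K;B)$, a finite set of roots of size at most $|B|\cdot N\cdot\max\deg$. The driving principle is Lemma \ref{lemma: determining images under the residue map}: if $a'$ is strictly closer to $a$ than every root in $Z$ is, then $\Res(\overline f,\overline g)(a',b)=\Res(\overline f,\overline g)(a,b)$ for all $b\in B$. So it suffices to isolate, uniformly, a neighbourhood of $a$ inside $\{a' : v(a'-a)>v(c-a)\text{ for all }c\in Z\}$. In general this set is $b^o(a,\mu)$ with $\mu:=\max_{c\in Z}v(c-a)$, which is not definable from $Z$ alone. Adding a residue-field parameter fixes this, exactly as in Case 3.2 of Lemma \ref{lemma: containment has USHD}.

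First case: $a\in Z$. Take $c=a$, let $\psi(x;z_{\mathbf K}):\equiv x=z_{\mathbf K}$, and let $\pi$ be trivial; this case is then immediate.

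Second case: $a\notin Z$. Let $\mu=\max_{c\in Z}v(c-a)<\infty$ and choose $c_1\in Z$ with $v(c_1-a)=\mu$. Consider the balls $b^o(c,\cdot)$ around points of $Z$, and split according to whether some $c_2\in Z$ satisfies $v(c_1-c_2)=\mu$.

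If such a $c_2$ exists, set
\[\psi(x;c_1,c_2):\equiv v(x-c_1)\geq v(c_2-c_1),\qquad \pi(x;c_1,c_2):=\Res(x-c_1,c_2-c_1).\]
Suppose $a'$ satisfies $\psi$ and $\pi(a')=\pi(a)$. Since $v(a-c_1)=v(c_2-c_1)$, the residue $\pi(a)$ is nonzero. Hence $v(a'-c_1)=\mu$ and $v(a'-a)>\mu\geq v(c-a)$ for all $c\in Z$. The key point is the strict inequality: the residues agree, so $v\big(\frac{a'-a}{c_2-c_1}\big)>0$.

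If no such $c_2$ exists, then every $c\in Z$ has $v(c-c_1)\neq\mu$. I would instead use
\[\psi(x;c_1,c_2):\equiv v(x-c_1)>v(c_2-c_1)\ \wedge\ \bigwedge(\text{membership data}),\]
where $c_2\in Z$ is chosen with $v(c_2-c_1)<\mu$ maximal. Then $a$ lies in the open ball $\beta:=b^o(c_1,c_2-c_1)$, and within $\beta$ the point $a$ lies on the sphere $v(x-c_1)=\mu$, while every $c\in Z\cap\beta$ has $v(c-c_1)>\mu$. Since the value $\mu$ is not pinned down by any element of $Z$, this is the main obstacle.

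To handle it, I expect to apply Lemma \ref{lemma: containment has USHD} to the finite family of balls centred in $Z$ together with $a$: the containment type of $a$ over balls determined by $Z$ should force $v(a'-c)=v(a-c)$ for all $c$. That alone is not enough for Lemma \ref{lemma: determining images under the residue map}, which needs $v(a'-a)>\mu$. Instead, I would bypass Lemma \ref{lemma: determining images under the residue map} in this subcase and compute directly. Write each $f_\ell(x,b)=e\prod(x-c)$. The residue of the quotient depends only on $v(a-c)$ and on the residues $\res\frac{a-c}{a-c_1}$, and these residues equal $1$ for $c$ with $v(c-c_1)>\mu$ and are determined by $\res\frac{c_1-c}{\cdot}$ otherwise. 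So $\Res(\overline f,\overline g)(a,b)$ is determined by the containment data. This is really a refinement of Lemma \ref{lemma: determining images under the residue map} in which ``$v(a'-a)>v(c-a)$ for all $c$'' is replaced by ``$\res\frac{a'-c}{a-c}=1$ for all $c$''. I would therefore phrase a single uniform condition, $\res\big(\frac{x-c}{x'-c}\big)$ constant, and verify it is implied by $\psi$ together with equality of $\pi$.

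Finally, the finitely many cases are coded into one formula via Remark \ref{remark: facts about relative USHDs} (iii), with parameter tuples from $\bigcup B\cup Z$.
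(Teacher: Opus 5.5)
Your first case ($a\in Z$) and your first subcase (some $c_2\in Z$ with $v(c_1-c_2)=\mu$) are fine; the latter is exactly the paper's Case 1. The gap is in the remaining subcase, which is the heart of the lemma.

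There, your key assertion is false: the containment type of $a$ over balls built from $Z$ does not determine $\Res(\overline f,\overline g)(a,b)$. Take $N=1$, $f(x;y)=x$, $g(x;y)=y$ and $0\notin B$. Then $Z=\{0\}$, so such balls only record $x\neq 0$. But whether $\Res(a,b)\neq 0$ depends on comparing $v(a)$ with $v(b)$, and when $v(a)=v(b)$ the value $\res(a/b)$ changes as $a$ moves inside $\{x:v(x)=v(b)\}$.

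In general, write $C_0:=\{c\in Z: v(c-a)=\mu\}$ $(=Z\cap\beta)$ and $k(\ell,b):=\sum_{c\in C_0}(m^f_c(\ell,b)-m^g_c(\ell,b))$ for the net multiplicity of the clustered roots. Moving $a$ to $a'$ with $v(a'-c_1)=\mu$ multiplies every nonzero $\rho_\ell(a,b)$ by $\lambda^{k(\ell,b)}$, where $\lambda=\res\frac{a'-c_1}{a-c_1}$. Moreover, whether $v(f_\ell(a,b))=v(g_\ell(a,b))$ holds depends on $v(a-c_1)$ relative to quantities involving the leading coefficients, and these are not of the form $v(c-c')$.

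Your proposed remedy is circular. The condition $\res\frac{a'-c}{a-c}=1$ for all $c\in Z$ is equivalent to $v(a'-a)>\mu$, which is exactly the hypothesis of Lemma \ref{lemma: determining images under the residue map} that you had just observed cannot be isolated from $Z$. You also never say what $\pi$ is in this case.

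The missing idea is to use the maps $\rho_\ell(x;b)$ themselves, at boundedly many $b\in B$, as $\pi$, without pinning down $\mu$ at all. The construction has three ingredients.
\begin{enumerate}
\item[(i)] Restrict $x$ to the annulus $v(c_2-c_1)<v(x-c_1)<\min_{c\in C_0}v(c-c_1)$. For $a'$ in it, $\res\frac{a'-c}{a-c}=1$ for $c\in Z\setminus C_0$, and $\frac{a'-c}{a-c}\cdot\frac{a-c_1}{a'-c_1}$ has residue $1$ for $c\in C_0$.
\item[(ii)] Add the $\mathbf k$-still from Proposition \ref{prop: relative distality of valuation formulas}, with parameters from $B$, for the formulas $v(f_\ell(x,y))=v(g_\ell(x,y))\neq\infty$. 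This fixes which $\rho_\ell(x,b)$ are nonzero, and forces $v(a'-c_1)=\mu$ as soon as one of them has $k(\ell,b)\neq 0$.
\item[(iii)] Since $\lambda$ is independent of $(\ell,b)$ and $|k(\ell,b)|\leq\deg_x f_\ell+\deg_x g_\ell$, choose for each occurring nonzero value $k$ a single pair $(\ell_i,b_i)$ with $k(\ell_i,b_i)=k$ and $\rho_{\ell_i}(a,b_i)\neq 0$. Let $\pi$ consist of the maps $\rho_{\ell_i}(x;b_i)$ together with the residue part of that $\mathbf k$-still.
\end{enumerate}
Then $\rho_{\ell_i}(a',b_i)=\rho_{\ell_i}(a,b_i)$ gives $\lambda^{k}=1$ for every relevant $k$. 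Hence $\rho_\ell(a',b)=\rho_\ell(a,b)$ for all $\ell\leq N$ and $b\in B$.
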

\begin{proof}
    We fix $(K,v)$, $B$, and $a$ as in the statement.
    We make a case distinction and prove that in each case, $\psi$ and $\pi$ exist, and it will be clear from the proof that they do not depend on $(K,v)$, $B$, or $a$.
    This is sufficient, as finitely many formulas can be coded into one, and finitely many definable maps can be combined in a tuple.
    Let $C:=Z_{\overline{f},\overline{g}}(K;B)$.
    We define
    \[\nu_0:=\max\{v(a-c):c\in C\}\text{ and }C_0:=\big\{c\in C:v(a-c)=\nu_0\big\}.\]
    
    We write $c_1\sim c_2$ whenever $c_1$ and $c_2$ are in $C_0$ and $v(c_1-c_2)>\nu_0$.
    Finally, we denote the map $\Res^K(f_\ell,g_\ell)(x;y_{\mathbf K})$ by $\rho_\ell(x;y_{\mathbf K})$.
    \newline

    {\underline{Case 1:}} There are $c_0$ and $c_1$ in $C_0$ with $c_0\not\sim c_1$.

    In this case, the image of $a$ under the map \[\pi(x;c_0,c_1):=\res_v\big(\frac{x-c_1}{c_0-c_1}\big)\] determines its image under $(\rho_\ell(x;b))_{\ell\leq N,b\in B}$.
    Indeed, suppose $a'\in K$ is such that $\pi(a';c_0,c_1)=\pi(a;c_0,c_1)$.
    By assumption, $v(c_1-c_0)=\nu_0=v(a-c_0)$, so we get $v(a'-a)>\nu_0$.
    Thus, by maximality of $\nu_0$, we get $v(a'-a)>v(c-a)$ for every $c\in C$.
    By the choice of $C$ and Lemma \ref{lemma: determining images under the residue map}, we conclude that $\rho_\ell(a';b)=\rho_\ell(a;b)$ for each $\ell\leq N$ and $b\in B$.\newline

    {\underline{Case 2:}} $|C_0/\sim|=1$.

    In this case, fix any $c_0\in C_0$ and find $c_1\in C\setminus C_0$ with 
    \[v(c_1-c_0)=\max\big\{v(c-c_0):c\in C\setminus C_0\big\}.\]
    Moreover, find $c_0,c_0'\in C_0$ such that $v(c_0-c_0')$ is minimal (that is, $c_0$ and $c_0'$ are as far apart as possible). 
    For each $b\in B$, $\ell\leq N$, and $c\in C_0$, let $m_c^f(\ell,b)$ be the multiplicity of $c$ as a root of $f_\ell(x;b)$, and similarly let $m_c^g(\ell,b)$ be the multiplicity of $c$ as a root of $g_\ell(x;b)$. Now, define
    \[k(\ell,b):=\sum_{c\in C_0}m_c^f(\ell,b)-m_c^g(\ell,b),\ S:=\big\{k(\ell,b):\  \ell\leq N,\  b\in B,\ v(f_\ell(a,b))=v(g_\ell(a,b))\big\}.\]
    Since $|k(\ell,b)|\leq \deg_x(f_\ell)+\deg_x(g_\ell)$ for each $\ell\leq N$ and $b\in B$, we get $|S|\leq m$ for some $m\in\mathbb N$ depending only on $\overline{f}$ and $\overline{g}$.
    Now, find $b_1,\dots,b_m$ in $B$ and $\ell_1,\dots,\ell_m\leq N$ such that $k(\ell_1,b_1),\dots,k(\ell_m,b_m)$ enumerates $S$ and such that $v(f_{\ell_i}(a,b_i))=v(g_{\ell_i}(a,b_i))$ for each $i\leq m$. 
    Write $\pi_1(x;y_{\mathbf K}^1,\dots,y_{\mathbf K}^m)$ for the definable map $\big(\rho_{\ell_1}(x;y_{\mathbf K}^1),\dots,\rho_{\ell_m}(x;y_{\mathbf K}^m)\big)$.

    Find, by Proposition \ref{prop: relative distality of valuation formulas} and Remark \ref{remark: facts about relative USHDs} (iv), a formula $\psi_0(x,s_{\mathbf K})$ as well as a definable map $\pi_0(x,s_{\mathbf K})$ into a finite power of $\mathbf{k}$ (depending only on $\overline{f}$ and $\overline{g}$) as well as a tuple $d$ from $B$, such that $\psi_0(x;d)\wedge\pi_0(x;d)=\pi_0(a;d)$ isolates the type of $a$ over $B$ with respect to each of the formulas
    \[\varphi_0^\ell(x;y_{\mathbf K}):\equiv\ v(f_\ell(x,y_{\mathbf K}))=v(g_\ell(x,y_{\mathbf{K}}))\neq \infty,\]
    with $\ell\leq N$.

    \underline{Claim:} Suppose $a'\in K$ is such that 
    \begin{enumerate}
        \item[(i)] $v(c_1-c_0)<v(a'-c_0)<v(c_0'-c_0)$,
        \item[(ii)] $\pi_1^K(a',b_1,\dots,b_m)=\pi_1^K(a,b_1,\dots,b_m)$,
        \item[(iii)] $\models \psi_0(a',d)$ and $\pi_0^K(a',d)=\pi_0^K(a,d)$. 
    \end{enumerate} 
    Then $\rho_\ell(a',b)=\rho_\ell(a,b)$ for all $b\in B$ and $\ell\leq N$.
    
    \begin{proof}[Proof of Claim]
        Fix $b\in B$ and $\ell\leq N$.
        Observe that if $\ell$ and $b$ are such that $v(f_\ell(a,b))\neq v(g_\ell(a,b))$ or $f(a,b)=g(a,b)=0$, then we get $\rho_\ell(a',b)=\rho_\ell(a,b)$ by (iii). 
        Thus, we may assume that $v(f_\ell(a,b))=v(g_\ell(a,b))$ and $g_\ell(a,b)\neq 0$.
        
        Note now that for $c\in C\setminus C_0$ we get
    \begin{align*}
        v(a'-a)=&v((a'-c_0)-(c_0-a))\\
        \geq &\min\big(v(a'-c_0),v(c_0-a)\big)\\
        \overset{by (i)}{>} &v(c_1-c_0)\\
        \geq &v(c-c_0)=v(c-a)
    \end{align*}
    using the ultrametric triangle inequality and maximality of $v(c_1-c_0)<\nu_0$. It follows that
    \[v\Big(\frac{a'-c}{a-c}-1\Big)>0.\]
    Thus,
    \begin{align}\label{eq: res lemma eq 1}
        \res_v\Big(\frac{a-c}{a'-c}\Big)=1=\res_v\Big(\frac{a'-c}{a-c}\Big),\ \text{ for all }c\in C\setminus C_0.
    \end{align}
    
    On the other hand, note that 
    \begin{equation}\label{eq: res lemma eq 3}
        \res_v\Big(\frac{a'-c}{a'-c_0}\Big)=\res_v\Big(\frac{a-c}{a-c_0}\Big)=1, \ \text{ for all }c\in C_0.
    \end{equation}
    Indeed, given $c\in C_0$, we have $v(c_0-c)>\nu_0=v(a-c)$ by the assumption of Case 2.
    It follows that 
    \[v\Big(\frac{a-c}{a-c_0}-1\Big)>0,\]
    which yields the second equality in (\ref{eq: res lemma eq 3}).
    Similarly, we have $v(c-c_0)>v(a'-c_0)$ by assumption $(i)$ and choice of $c_0$ and $c_0'$, which yields the first equality of (\ref{eq: res lemma eq 3}).
    
    We conclude that
    \begin{align*}
        \rho_\ell(a,b)&=\res_v\Big(\frac{f_\ell(a',b)}{g_\ell(a',b)}\cdot\prod_{c\in C}\big(\frac{a-c}{a'-c}\big)^{m_c^f(\ell,b)-m_c^g(\ell,b)}\Big)\\
        &= \rho_\ell(a',b)\cdot \res_v\Big(\prod_{c\in C}\big(\frac{a-c}{a'-c}\big)^{m_c^f(\ell,b)-m_c^g(\ell,b)}\Big)\\
        &\overset{\text{by }(\ref{eq: res lemma eq 1})}{=}\rho_\ell(a',b)\cdot \res_v\Big(\prod_{c\in C_0}\big(\frac{a-c}{a'-c}\big)^{m_c^f(\ell,b)-m_c^g(\ell,b)}\Big)\\
        &\overset{\text{by }(\ref{eq: res lemma eq 3})}{=}\rho_\ell(a',b)\cdot \res_v\Big(\frac{a-c_0}{a'-c_0}\Big)^{k(\ell,b)}
    \end{align*}
    where we used the fact that $0=v(\frac{f_\ell(a,b)}{g_\ell(a,b)})=v(\frac{f_\ell(a',b)}{g_\ell(a',b)})$ in the second step.
    
    If $k(\ell,b)=0$, we get $\rho_\ell(a',b)=\rho_\ell(a,b)$ immediately.
    Otherwise, we can find $i\leq m$ such that $k(\ell_i,b_i)=k(\ell,b)$. 
    By assumption (ii) of the claim, and (\ref{eq: res lemma eq 3}), we obtain
    \[1=\frac{\rho_{\ell_i}(a,b_i)}{\rho_{\ell_i}(a',b_i)}=\res_v\Big(\frac{a-c_0}{a'-c_0}\Big)^{k(\ell_i,b_i)}=\res_v\Big(\frac{a-c_0}{a'-c_0}\Big)^{k(\ell,b)},\]
    and conclude that $\rho_\ell(a',b)=\rho_\ell(a,b)$.
    \end{proof}
\end{proof}

We can now easily derive relative distality of formulas of the form \[\varphi_{\overline{f},\overline{g},P}(x;y_{\mathbf K},y_{\mathbf k})\equiv P\Big(\Res(\overline{f},\overline{g})(x,y_{\mathbf K}),\ y_{\mathbf k}\Big)=0,\]
where $x$ is unary in the field-sort $\mathbf K$.

\begin{proof}[Proof of Proposition \ref{prop: res formulas are relatively distal}]
    Let $(K,v)\models\ACVF$ and $A\fin K^{|y_{\mathbf K}|}\times k^{|y_{\mathbf k}|}$.
    We let $B$ be the projection of $A$ to the $y_{\mathbf K}$-part.
    Note that if $a$ and $a'$ are such that $\Res_v(\overline{f},\overline{g})(a,b)=\Res_v(\overline{f},\overline{g})(a',b)$ for all $b\in B$, then we trivially get $\tp_{\varphi_{\overline{f},\overline{g},P}}(a/A)=\tp_{\varphi_{\overline{f},\overline{g},P}}(a'/A)$.
    By Lemma \ref{lemma: determining images in the residue field} and quantifier elimination (Fact \ref{fact: QE in ACVF}), the assumptions of Lemma \ref{lemma: algebraic parameters in USHD} are satisfied for $\varphi_{\overline{f},\overline{g},P}(x;y_{\mathbf K},y_{\mathbf k})$.
    We conclude that $\varphi_{\overline{f},\overline{g},P}(x;y_{\mathbf K},y_{\mathbf k})$ has a $\mathbf k$-still in $\ACVF$.
\end{proof}

Finally, we have all the ingredients to prove that $\ACVF$ is distal relative to $\mathbf k$.

\begin{proof}[Proof of Theorem \ref{thm: ACVF distal relative to k}]
    By Fact \ref{fact: QE in ACVF}, any atomic $L_{\Gamma ,k}$-formula $\varphi(x_{\mathbf{K}},y)$, where $x_{\mathbf K}$ is unary, is either of the form $\varphi_{f,g,\sigma,\tau}$ or $\varphi_{\overline{f},\overline{g},P}$ defined in \eqref{formula form 2} and $\eqref{formula form 3}$. 
    Thus, by Lemma \ref{lemma: relative distality of k and Gamma formulas}, Proposition \ref{prop: relative distality of valuation formulas}, and Proposition \ref{prop: res formulas are relatively distal}, any atomic $L_{\Gamma ,k}$-formula $\varphi(x,y)$ with unary $x$ has a $\mathbf k$-still in $\ACVF$.
    By Lemma \ref{lem: Boolean combinations} and Lemma \ref{lem: relative distality in one variable enough}, it follows that $\ACVF$ is distal relative to $\mathbf{k}$.
\end{proof}

\section{Incidence bounds for fields with finite residue field}\label{sec: applications}

Fix throughout this section a field $F$ with a valuation $v$ on $F$ such that its residue field is finite, say $\mathbf{k}(F)=\mathbb F_q$, where $q$ is a prime power. 
For concreteness, the reader may assume that $F$ is the function field $\mathbb F_q(t)$ with the \emph{t-adic} valuation 
\[v\Bigg(\frac{\sum_{i=0}^k\lambda_it^i}{\sum_{j=0}^\ell\mu_jt^j}\Bigg):=\min\{i:\lambda_i\neq 0\}-\min\{j:\mu_j\neq 0\}.\]

Other examples of fields admitting a valuation with finite residue field include the field $\mathbb F_q((t))$ of Laurent series, function fields of varieties over $\mathbb F_q$, the p-adics, and all finitely generated extensions of finite fields (see \cite[Section 4]{bays_martin}).

We work throughout this section in the $L_{\Gamma ,k}$-theory $\ACVF$.
All formulas will be $L_{\Gamma ,k}$-formulas, and we will refer to a $\mathbf k$-still in $\ACVF$ simply as a $\mathbf{k}$-still (for a given formula). 
All variables will be of the field sort, unless they carry a subscript $\mathbf{k}$, in which case they are variables of the residue field sort.

\subsection{General bound}

Say that a subset $E$ of $F^x\times F^y$ is \emph{valuative} if $E$ is a Boolean combination of sets of the form
\[W_{f,g}:=\big\{(a,b)\in F^x\times F^y:\ v(f(a,b))<v(g(a,b))\big\}\] 
where $f$ and $g$ range over the polynomial ring $F[x,y]$.
We say that $E$ has \emph{description complexity} at most $k$ if $\max(|x|,|y|)\leq k$ and $E$ is a Boolean combination of at most $k$-many sets $W_{f,g}$ where $f$ and $g$ have degree at most $k$ as polynomials in the multi-variable $xy$.

Note that in particular, if $E$ is \emph{algebraic} -- that is, $E$ is a Boolean combination of sets of the form 
\[Z_f=\{(a,b)\in F^x\times F^y:f(a,b)=0\},\]
then it is valuative.
Indeed, in any valued field $F$, the set $Z_f$ coincides with the complement of $W_{f,0}$.
Thus, Theorem \ref{thm: intro 1} is indeed a special case of the below theorem.

\begin{theorem}\label{thm: incidence bounds valued fields} 
    Let $k$ and $d$ be positive integers. There is some $\epsilon>0$ such that for any valuative set $E\subseteq F^x\times F^y$ of description complexity at most $k$, and any $A\fin F^x$, $B\fin F^y$, and $s\in\mathbb N$ with the property that $E(A,B)$ is $K_{d,s}$-free, we have
    \[\big|E(A,B)\big|=O_{d,s,k,q}\big(|A|^{1-\epsilon}|B|^{1-\frac{1}{d}+\frac{\epsilon}{d}}+|A|+|B|\big).\]
\end{theorem}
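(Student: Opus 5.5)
The plan is to combine Theorem \ref{thm: ACVF distal relative to k} with Corollary \ref{cor: k-stills give zarankiewicz bounds}. The uniformity of $\epsilon$ comes from encoding the whole family of valuative sets of complexity $\le k$ as instances of a single parameter-free formula.

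First, I pass from $F$ to an algebraically closed valued field. Let $(K,v)$ be a model of $\ACVF$ extending $(F,v)$, for instance an algebraic closure of $F$ with an extension of $v$. Its residue field $\mathbf k(K)$ is then an algebraic closure of $\mathbb F_q$. Any valuative set $E$ of description complexity at most $k$ is the trace on $F^x\times F^y$ of a set defined in $K$ by an instance $\Phi_j(x;y,w)$, with parameters $w\in F^{|w|}$ the coefficients, of one of finitely many $L_{\Gamma,k}$-formulas $\Phi_j$. Each $\Phi_j$ is a Boolean combination of at most $k$ formulas $v(f(x,y,w))<v(g(x,y,w))$, where $f,g$ are generic polynomials of degree $\le k$ with coefficient variables among $w$. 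Padding $x,y$ to arity $k$ with dummy variables, there are only finitely many such $\Phi_j$, depending only on $k$. I code them into a single formula $\Phi(x;y,w,e)$ using selector variables $e$, in the style of Remark \ref{remark: facts about relative USHDs}~(iii).

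By Theorem \ref{thm: ACVF distal relative to k}, $\Phi(x;y,w,e)$ has a $\mathbf k$-still $\psi$ in $\ACVF$, witnessed by some $f$, and both depend only on $k$. By Lemma \ref{lem: relative USHDs with parameters}, substituting the concrete parameters $w_0,e_0$ yields a $\mathbf k$-still $\psi'$ for $E=\Phi(x;y,w_0,e_0)$ whose arity $t$ is that of $\psi$, hence depends only on $k$. Its witness is $f'(x,z')=f(x,z'\text{ with }w_0,e_0)$.

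Next I apply Proposition \ref{prop: k-stills give DCDs} with $X=F^x$ and $Y=F^y$. The point to check is that $D_0=f'(X;Y^t)$ lies in $\mathbf k(\dcl(F))$. Since $F\subseteq K$ is a substructure (the valuation and $\Res$ restrict), every element of $\mathbf k(K)$ definable over $F$ lies in $\mathbf k(F)=\mathbb F_q$. To see this, use quantifier elimination (Fact \ref{fact: QE in ACVF}): a definable function into $\mathbf k$ with parameters from $F$ is given piecewise by terms. Its value at $F$-tuples is obtained from $F$ by applying field operations and $\Res$, and then $\mathbf k$-sort operations, so it lands in $\mathbb F_q$. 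Alternatively, one can note that $\dcl$ of $F$ inside $K$ restricted to $\mathbf k$ is contained in the residue field of the henselization/perfect hull, but the term argument is cleaner. This gives $|D_0|\le q$, or $q^{m}$ counting tuples in $\mathbf k^{m}$ with $m=|u_D|$ depending only on $k$. Proposition \ref{prop: k-stills give DCDs} then produces a DCD of exponent $t$ with constant factor $O_{k,q}(1)$, after also folding in the number of pieces.

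Finally, Theorem \ref{thm: distal zarankiewicz} gives the bound with $\epsilon=\frac{d-1}{td-1}$, which depends only on $d$ and $k$; the displayed exponent $1-\frac1d+\frac\epsilon d$ matches $1-\frac{1-\epsilon}{d}$. The degenerate case $d=1$ gives $\epsilon=0$, which is still fine: it is the trivial bound, consistent with $K_{1,s}$-freeness directly.

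The main obstacle is the claim $D_0\subseteq\mathbb F_q^m$. Terms built from $\Res$ applied to elements of $F$ stay in $\mathbf k(F)$, but definable maps may involve quantified algebraic data, such as roots of polynomials as in Lemma \ref{lemma: swiss cheeses}. These could a priori produce residues in finite extensions of $\mathbb F_q$. I would handle this via quantifier elimination: a $\emptyset$-definable $f$ agrees piecewise with quantifier-free definable functions on $F$-points. If that argument fails, the fallback is that such values lie in $\mathbb F_{q^{r}}$ with $r$ bounded in terms of $k$, which still gives $|D_0|=O_{k,q}(1)$.
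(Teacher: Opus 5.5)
Your proposal follows the paper's proof. You extend $(F,v)$ to a model $K$ of $\ACVF$, obtain a $\mathbf k$-still that is uniform in the coefficients from Theorem~\ref{thm: ACVF distal relative to k} and Lemma~\ref{lem: relative USHDs with parameters}, note that the witnessing map takes values in the finite set $\mathbf k(\dcl(F))^m$, and conclude with Proposition~\ref{prop: k-stills give DCDs} and Theorem~\ref{thm: distal zarankiewicz}. You code all complexity-$\le k$ shapes into one formula, whereas the paper conjoins $k$ copies of a uniform still for the atomic pieces $\chi_{\lambda,\mu}$; this difference is immaterial.

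The step you single out, $\mathbf k(\dcl(F))=\mathbb F_q$, is asserted in the paper without proof. Your justification of it does not work.
\begin{itemize}
\item Being a substructure does not make $F$ definably closed: its henselization lies in $\dcl(F)$.
\item Quantifier elimination does not make definable maps piecewise equal to terms. For example, with $p$ the residue characteristic, $x\mapsto\Res(x,1)^{1/p}$ is $\emptyset$-definable. But at a unit $x$ whose residue $\xi$ is transcendental over $\mathbb F_p$, every $\mathbf k$-term in $x$ takes a value in $\mathbb F_p(\xi)$, which does not contain $\xi^{1/p}$.
\end{itemize}

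Quantifier elimination should instead be applied to the formula that pins down the value. Suppose $\alpha\in\mathbf k(K)$ is the unique solution of $\chi(u;e)$, with $e$ from $F$ and $\chi$ quantifier-free.
\begin{itemize}
\item The only atomic formulas in which the $\mathbf k$-variable $u$ occurs are equations between $\mathbf k$-terms, and every $\Res(t_1(e),t_2(e))$ lies in $\mathbb F_q$. Hence $\chi(u;e)$ is equivalent to a Boolean combination of equations $P(u)=0$ with $P\in\mathbb F_q[u]$.
\item Since $\mathbf k(K)$ is infinite, the singleton $\{\alpha\}$ lies in the union of the zero sets of the nonzero such $P$.
\item Every $\mathbb F_q$-conjugate of $\alpha$ satisfies exactly the same equations, so it also satisfies $\chi(u;e)$ and equals $\alpha$.
\item As $\mathbb F_q$ is perfect, this gives $\alpha\in\mathbb F_q$.
\end{itemize}
Omitting the conjugate step already yields your fallback $\alpha\in\mathbb F_{q^r}$. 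Here $r$ is bounded by the $u$-degrees occurring in $\chi$, which depend only on $k$.

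Finally, for $d=1$ you should not take $\epsilon=0$, because the statement requires $\epsilon>0$. Any $\epsilon>0$ works in that case, since $K_{1,s}$-freeness already gives $|E(A,B)|<s|A|$.
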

\begin{proof}    
    For a multi-variable $z$ of length $m$ and $\overline{\ell}\in\{0,\dots,k\}^m$, we write $z^{\overline{\ell}}$ for the term
    $z_1^{\ell_1}\cdot\ldots \cdot z_m^{\ell_m}$.
    Let $I_k$ denote the set of tuples $\overline{\ell}\in\{0,\dots,k\}^{2k}$ with $\sum_i\ell_i\leq k$.
    
    Since $E$ is of description complexity $\leq k$, we may assume that $|x|=|y|=k$, and $E=\varphi^F$ for an $L_{\Gamma ,k}(F)$-formula $\varphi(x,y)$, which is a Boolean combination of $L_{\Gamma ,k}(F)$-formulas of the form 
    \[\chi_{\lambda,\mu}(x,y):\equiv v\Big(\sum_{\overline{\ell}\in I_k} \lambda_{\overline{\ell}}\cdot(xy)^{\ell}\Big)<v\Big(\sum_{\overline{\ell}\in I_k} \mu_{\overline{\ell}}\cdot(xy)^{\ell}\Big),\]
    where $\lambda,\mu\in F^{I_k}$.
    We find $(K,v')\models \ACVF$ extending $(F,v)$ as a valued field, so in particular $\res_{v'}(F)=\res_v(F)=\mathbb F_q$.
    Such an extension exists; see for example \cite[Theorem 3.1.2]{MR2183496}.
    
    By Theorem \ref{thm: ACVF distal relative to k} and Lemma \ref{lem: relative USHDs with parameters}, each formula $\chi_{\lambda,\mu}(x,y)$ has a $\mathbf{k}$-still in $(K,v')$, and uniformly in the sense that there is an $L_{\Gamma ,k}$-formula $\psi(x,z,u_{\mathbf k};w)$, such that $\psi_{\lambda,\mu}(x;z;u_{\mathbf k}):=\psi(x,z,u_{\mathbf k};\lambda\mu)$ is a $\mathbf{k}$-still for $\chi_{\lambda,\mu}(x,y)$ in $(K,v')$, for any $\lambda,\mu\in K^{I_k}$.
    Let $(\lambda_i,\mu_i)_{i\leq k}$ enumerate the set of pairs $(\lambda,\mu)\in F^{I_k}\times F^{I_k}$ where $\chi_{\lambda,\mu}$ appears in the formula $\varphi(x;y)$.
    Let $\overline{z}:=(z_{i})_{i\leq k}$ be a concatenation of $k$-many pairwise-disjoint copies of $z$, and let $\overline{u}_{\mathbf k}:=((u_{\mathbf k})_i)_{i\leq k}$ be a concatenation of $k$-many disjoint copies of $u_{\mathbf k}$.
    Now, let $\psi(x;\overline{z};\overline{u}_{\mathbf k})$ be the $L_{\Gamma,k}(F)$-formula 
    \[\bigwedge_{i\leq k}\psi_{\lambda_i,\mu_i}(x,z_i,(u_{\mathbf k})_i).\] 
    Then $\psi$ is a $\mathbf k$-still for $\varphi$ in $(K,v')$.
    Since $\dcl(F)\cap \mathbf{k}(K)=\mathbb F_q$ has cardinality $q$ and since
    \[E=\varphi^F=\varphi^K(F^x,F^y),\]
    the result follows from Corollary \ref{cor: k-stills give zarankiewicz bounds} with $\epsilon=\frac{d-1}{td-1}$, where $t=\frac{|\overline{z}|}{|y|}$ is the arity of $\psi$. 
\end{proof}

\begin{remark}
    \begin{enumerate}
        \item[(i)] Under the mild symmetry assumption that $|B|^{\frac{1}{d}+\delta}\leq |A|\leq |B|^{\frac{d-1}{\epsilon}-d\delta}$ for some positive $\delta$, Theorem \ref{thm: incidence bounds valued fields} implies the bound
        \[|E(A,B)|=O\big(|A|^{1-\frac{\epsilon}{d}}|B|^{1-\frac{1}{d}-\epsilon\delta}+|B|\big),\]
        providing a power-saving when compared with the classical bound of $O(|A||B|^{1-\frac{1}{d}}+|B|)$ coming from Fact \ref{fact: Kövari-Sos-Turan}.

        \item[(ii)] By Remark \ref{remark: poly dependence relative distality bound}, the constant factor in the bound from Theorem \ref{thm: incidence bounds valued fields} is polynomial in $q$.
        \item[(iii)] Note that a relation $E\subseteq F^x\times F^y$ is valuative if and only if it is defined by a quantifier-free $L_{\Gamma ,k}(F)$-formula $\varphi(x;y)$.
        Recall that we write $\varphi^F$ for the interpretation of $\varphi$ in $(F,v)$.
        If $(K,v')\geq (F,v)$ is a valued field extension, then $\varphi^F=E$ coincides with the restriction $\varphi^K(F^x,F^y)$. 
    \end{enumerate}
\end{remark}

\subsection{Point-line incidence bound}\label{sec: point-line}

We show in this section that in the case where $E\subseteq F^2\times F^2$ is point-line incidence, Theorem \ref{thm: incidence bounds valued fields} holds with $\epsilon=\frac{1}{9}$.
That is, we prove the following.

\begin{theorem}\label{thm: point line explicit bound}
    Let $(F,v)$ be a valued field with residue field $\mathbb F_q$.
    Let $P$ and $L$ be finite sets of points and lines in $F^2$, respectively, and let $I$ be the incidence relation.  
    Then 
    \[\big|I(P,L)\big|=O_q\big(|P|^{\frac{8}{9}}|L|^{\frac{5}{9}}+|P|+|L|\big).\]
    In particular, if $n=|P|=|L|$, then 
    \[|I(P,L)|=O_q(n^{\frac{3}{2}-\frac{1}{18}}).\]
\end{theorem}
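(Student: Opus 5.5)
The plan is to apply Theorem \ref{thm: distal zarankiewicz} with $d=2$. The incidence relation is $K_{2,2}$-free, since two distinct points lie on at most one common line. The exponent there is $\epsilon=\frac{d-1}{td-1}=\frac{1}{2t-1}$, which equals $\frac19$ exactly when $t=5$. Then $|P|^{1-\epsilon}|L|^{1-\frac{1-\epsilon}{2}}=|P|^{\frac89}|L|^{\frac59}$. So the whole task is to exhibit a piecewise $\mathbf k$-still of arity $5$ for the formula $\varphi(x_1,x_2;y_1,y_2):\equiv x_2=x_1y_1+y_2$. Given this, we pass to $(K,v')\models\ACVF$ extending $(F,v)$, exactly as in the proof of Theorem \ref{thm: incidence bounds valued fields}. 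The witnessing maps take values in $\mathbf k(\dcl(F))=\mathbb F_q$, so Proposition \ref{prop: k-stills give DCDs} and Corollary \ref{cor: k-stills give zarankiewicz bounds} give a DCD of exponent $5$ with constant factor depending only on $q$, and the bound follows. The case $n=|P|=|L|$ is arithmetic: $\frac89+\frac59=\frac{13}9=\frac32-\frac1{18}$.

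To build the still, I would unwind the proof of Lemma \ref{lem: relative distality in one variable enough} by hand. Doing this by hand is what lets us count parameters, instead of going through quantifier elimination.

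\textbf{Step 1 (inner variable $x_2$, with $x_1$ as a parameter).} For fixed $x_1=a_1$, the $\varphi$-type of $a_2$ over $L$ is its equality type over the finite set $\{a_1b_1+b_2:(b_1,b_2)\in L\}$. This is the containment type of $a_2$ relative to degenerate balls of radius $\infty$. Following Lemma \ref{lemma: containment has USHD} and the pruning in Remark \ref{remark: about containment and k-stills} (ii), I expect an arity-$2$ still. Its cells are either ``$x_2=x_1c_1+c_2$'', or ``$x_2$ lies in a ball $b^\star$ determined by two lines and avoids the relevant sub-ball'', or ``$\res\big(\frac{x_2-(x_1c_1+c_2)}{(x_1c_1'+c_2')-(x_1c_1+c_2)}\big)=\alpha$'' with $\alpha\in\mathbf k$. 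Each cell is an instance of a formula in $x_1,x_2$ with at most two lines $c,c'\in L$ as parameters.

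\textbf{Step 2 (outer variable $x_1$).} Given a cell from Step 1 with parameters $c,c'$ and residue value $\alpha$, we need the type of $a_1$ with respect to the formulas $\theta^\pm(x_1;y,c,c',\alpha)$. These assert that the Step 1 cell over $x_1$ lies inside, or avoids, the line $y$. After unwinding, they are Boolean combinations of conditions on $x_1$ of two kinds: valuation comparisons between linear polynomials $x_1(c_1-y_1)+(c_2-y_2)$, and residue conditions on quotients of such linear polynomials. The relevant roots are the $x_1$-coordinates of pairwise intersections of lines. Using Lemma \ref{lemma: swiss cheeses} and Lemma \ref{lemma: determining images in the residue field} in this linear situation, I would aim to show that these types are isolated using at most three further lines. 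Together with Step 1's two lines, this gives arity $2+3=5$.

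The main obstacle is the parameter count in Step 2. The general proof of Lemma \ref{lem: relative distality in one variable enough} concatenates $z,s^+,s^-$, and so would give a much larger arity. I would first try to show that the roots and radii in Step 2 are all determined by the fixed lines $c,c'$ together with one or two new lines. Then I would merge the $\theta^+$ and $\theta^-$ stills into a single one reusing $c,c'$, absorbing the extra residue-field data into the $u_{\mathbf k}$ part via Lemma \ref{lemma: algebraic parameters in USHD}.
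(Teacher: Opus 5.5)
\textbf{Genuine gap.} Your reduction to exponent $t=5$ in Theorem~\ref{thm: distal zarankiewicz} is correct, and so is the arithmetic. The gap is in how you propose to reach $t=5$.

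\textbf{Your arity counts fail.} You aim for a piecewise $\mathbf k$-still of \emph{arity} five, so that Proposition~\ref{prop: k-stills give DCDs} returns exponent five. Your Step~1 count is already wrong.
\begin{itemize}
\item Remark~\ref{remark: about containment and k-stills}(ii) lowers the \emph{number of cells} of the containment decomposition to $O_q(|B|^2)$ by discarding cells. The still itself keeps arity three.
\item In the point case, the annulus cell is $\psi_2$ of Lemma~\ref{lemma: step 1 of point line incidence bound}: $v(\tau(c')-\tau(c))<v(x_2-\tau(c))<v(\tau(c'')-\tau(c))$. Your ``relevant sub-ball'' needs the third line $c''$ to name the inner radius.
\item This cannot be avoided. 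Take $v(\tau(c')-\tau(c))=\gamma_1$, $v(a_2-\tau(c))=\gamma$, $v(\tau(c'')-\tau(c))=\gamma_2$, with $\gamma,\gamma_2$ in the same cut of $\mathbb Q\gamma_1$ and $\gamma_1,\gamma$ in the same cut of $\mathbb Q\gamma_2$. For example, use $1<1+\delta<1+\epsilon$ with $1\gg\epsilon\gg\delta>0$ and otherwise generic data. Then over any two of the three points, together with all of $\mathbf k$, $a_2$ has the same type as the remaining point. So no arity-two still exists for Step~1.
\item Your ``three further lines'' in Step~2 also fails for the residue formula $\theta_3$. The still in Lemma~\ref{lemma: linear residue images} has arity five; Case~2.2 uses five elements of $B$.
\end{itemize}
Carried out with the stills actually available, your two-step scheme gives arity seven, exactly as in Proposition~\ref{proposition: DCD for point line}. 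Proposition~\ref{prop: k-stills give DCDs} would then only give $t=7$, i.e.\ $\epsilon=\frac1{13}$, not the claimed bound.

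\textbf{The missing idea is to count cells, not parameters.} Keep a combined cell $\psi_i'(x_2;x_1,c,\alpha)\wedge\chi'_{i,j}(x_1;d,\beta,c,\alpha)$ only if its Step~2 part belongs to the pruned decomposition $\mathcal T_i(B\times\{(c,\alpha)\})$ for $\theta_i$. By the ``Moreover'' parts of Lemmas~\ref{lem: linear swiss cheese} and~\ref{lemma: linear residue images}, this decomposition has exponent two for $\theta_1,\theta_2$ and exponent three for $\theta_3$. This gives $O_q(|B|^{2+3})$, $O_q(|B|^{3+2})$ and $O_q(|B|^{2+3})$ cells for $\psi_1,\psi_2,\psi_3$ respectively. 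Hence the DCD has exponent five even though the arity is seven.

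\textbf{The $\theta^\pm$ merging is not the obstacle.}
\begin{itemize}
\item If $a$ lies on no line of $B$, only $\theta^-$ is used, namely ``the Step~1 cell avoids $x_1y_1+y_2$''.
\item If $a$ lies on a line $c$, the Step~1 cell is the singleton $x_2=x_1c_1+c_2$. Then $\theta^\pm$ reduce to the linear condition $x_1c_1+c_2\neq x_1y_1+y_2$ in $x_1$.
\end{itemize}
This refinement along the line is genuinely required, because the line alone contains its intersection points with the other lines of $B$. It costs exponent only $1+2$, so it does not affect the bound.
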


In the above theorem, we may ignore vertical lines, that is, lines of the form $\{(a_1,a_2)\in F^2: a_1=c\}$ for some fixed value $c\in F$. 
Indeed, each point is incident with exactly one vertical line, so such incidences contribute at most $|P|$ to $|I(P,L)|$.
Thus, in what follows, we identify lines with pairs $(b_1,b_2)\in F^2$, and $I\subseteq F^2\times F^2$ is precisely the interpretation of the formula
$x_2=x_1y_1+y_2$
in the field $F$.

Recall the definition of a piecewise $\mathbf k$-still from the paragraph below Remark \ref{remark: facts about relative USHDs}. 

The strategy for proving Theorem \ref{thm: point line explicit bound} is to construct a piecewise $\mathbf k$-still for the formula
\[\varphi_I(x;y):\equiv x_2=x_1y_1+y_2,\]
which induces a DCD of suitably low exponent for $(\varphi_I)^F=I$.
In the first step, we construct a piecewise $\mathbf{k}$-still $(\psi_i(x_2,z,u_{\mathbf k}))_{i}$ for the formula $\varphi_I^0(x_2;x_1,y):\equiv \varphi_I(x,y)$.
Following the argument of Lemma \ref{lem: relative distality in one variable enough}, we consider, for each $i$, the formula $\theta_i(x_1;z,u_{\mathbf{k}},y)$ where $\theta_i(a_1;c,\alpha,b)$ expresses that the set defined by $\psi_i(x_2;c,\alpha)$ is not crossed by the set defined by $\varphi_I^0(x_2;a_1,b)$.
In other words, the set defined by $\psi_i(x_2;c,\alpha)$ is either the singleton $\{a_1b_1+b_2\}$ or a set not containing $a_1b_1+b_2$.
In the second step, we construct a piecewise $\mathbf{k}$-still for each such formula $\theta_i(x_1;z,u_{\mathbf k},y)$.
This is covered by Lemma \ref{lem: linear swiss cheese} and Lemma \ref{lemma: linear residue images}.
Putting the two steps together, we obtain a piecewise $\mathbf{k}$-still (of arity seven) for $\varphi_I(x,y)$ in Proposition \ref{proposition: DCD for point line}. 
Finally, we show that the exponent of the resulting DCD for $I$ can be reduced to five, rather than the trivial upper bound seven coming from the arity, by removing some cells which are not needed.
The final step is essentially already contained in Lemma \ref{lem: linear swiss cheese} and Lemma \ref{lemma: linear residue images}.

\begin{remark}\label{remark: suboptimal bounds}
    The exponents in the bound given by Theorem \ref{thm: point line explicit bound} are almost certainly not optimal, and in fact can likely be improved with the same methods, by carefully removing unnecessary cells from the DCD given by Proposition \ref{proposition: DCD for point line}.
\end{remark}

We begin by constructing a $\mathbf{k}$-still for the formula $\varphi_I^0(x_2;x_1,y)$.
For notational convenience, let $z$ be a ternary variable, and write $\tau(z)$ for the term $z_1z_2+z_3$.
Thus, $\varphi_I^0(x_2;z)$ is the formula $x_2=\tau(z)$.
Let $(z^i)_i$ be a family of disjoint copies of $z$.
Finally, let $u_{\mathbf k} $ denote a unary variable of sort $\mathbf k$.

\begin{lemma}\label{lemma: step 1 of point line incidence bound}
    The formulas 
    \begin{align*}
        &\psi_1(x_2;z^1,z^2):\equiv v(x_2-\tau(z^1))<v(\tau(z^2)-\tau(z^1)),\\
        &\psi_2(x_2;z^1,z^2,z^3):\equiv v(\tau(z^2)-\tau(z^1))<v(x_2-\tau(z^1))<v(\tau(z^3)-\tau(z^1)),\\
        &\psi_3(x_2;z^1,z^2;u_{\mathbf k} ):\equiv \Res\big(x_2-\tau(z^1),\tau(z^2)-\tau(z^1)\big)=u_{\mathbf k},\text{ and} \\
        &\psi_4(x_2;z^1):\equiv x_2=\tau(z^1) 
    \end{align*}
    form a piecewise $\mathbf{k}$-still for $\varphi_I^0(x_2;z)$, witnessed by $f_3(x_2;z^1,z^2):=\Res\big(x_2-\tau(z^1),\tau(z^2)-\tau(z^1)\big)$.
\end{lemma}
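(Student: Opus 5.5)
We reduce to the containment-type argument of Lemma \ref{lemma: containment has USHD}, in the special situation where every ball is centred at a point of $T:=\tau(C)$. Fix $(K,v)\models\ACVF$, $a_2\in K$ and a finite set $C\subseteq K^3$ of parameters for $\varphi_I^0$ with $|C|\geq 2$, and put $T:=\{\tau(c):c\in C\}$. The $\varphi_I^0$-type of $a_2$ over $C$ is determined by the set $\{c\in C:\tau(c)=a_2\}$, equivalently by which $t\in T$ equal $a_2$. It therefore suffices to find, in each case, an instance of one of $\psi_1,\dots,\psi_4$ with parameters $z^i$ from $C$ (and $u_{\mathbf k}=f_3(a_2;z^1,z^2)$ for $\psi_3$) that $a_2$ satisfies and that either equals $\{a_2\}$ with $a_2\in T$, or is disjoint from $T$.

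\emph{Case $a_2\in T$.} Choose $c^1\in C$ with $\tau(c^1)=a_2$. Then $\psi_4(x_2;c^1)$ defines $\{a_2\}$, which isolates the type.

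\emph{Case $a_2\notin T$.} If $|T|=1$, say $T=\{t\}$, then $\psi_1(x_2;c^1,c^2)$ with $\tau(c^1)=\tau(c^2)=t$ is the formula $v(x_2-t)<\infty$, i.e.\ $x_2\neq t$. This holds of $a_2$ and isolates the type, and $|C|\ge 2$ makes the choice of two parameters possible. Now assume $|T|\geq 2$. Set $\nu:=\max_{t\in T}v(a_2-t)$ and pick $t_1=\tau(c^1)$ attaining this maximum; the point is to locate $a_2$ relative to the finitely many values $v(t-t_1)$ with $t\in T\setminus\{t_1\}$.
\begin{itemize}
\item If $\nu<v(t-t_1)$ for some $t$, take $c^3$ with $\tau(c^3)=t$ minimising $v(t-t_1)$ among those values exceeding $\nu$.
\item If in addition some $v(t'-t_1)<\nu$, take $c^2$ maximising $v(t'-t_1)$ among those values below $\nu$. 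Then $\psi_2(x_2;c^1,c^2,c^3)$ holds of $a_2$. Any $x$ satisfying it has $v(x-t_1)$ strictly between consecutive elements of $\{v(t-t_1)\}$, so $v(x-t)=\min(v(x-t_1),v(t-t_1))<\infty$ for every $t$, hence $x\notin T$.
\item If there is no such $t'$, use $\psi_1$ with $(c^1,c^3)$ and the same argument.
\item If instead $\nu\geq v(t-t_1)$ for all $t\neq t_1$ and the inequality is strict for some $t$: if it is strict for all $t$, then $\psi_1$ fails for $a_2$, so we need a variant. Here maximality of $\nu$ forces $\nu$ to equal some $v(t-t_1)$ whenever $T$ contains an element at larger distance from $a_2$, and I expect this boundary case to be the main obstacle.
\end{itemize}

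\emph{Case $\nu=v(t_2-t_1)$ for some $t_2=\tau(c^2)$.} Use $\psi_3(x_2;c^1,c^2;\alpha)$ with $\alpha:=\Res(a_2-t_1,t_2-t_1)\neq 0$. Any $x$ satisfying it lies in the closed ball $b^f(t_1,\nu)$, in the residue class of $a_2$, and we must check that this class contains no element of $T$. If $t\in T$ lay in it, then $v(a_2-t)>\nu$, contradicting the maximality of $\nu$. So the instance of $\psi_3$ misses $T$.

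For the case where all $v(t-t_1)<\nu$, the ball $b^f(t_1,\nu)$ meets no other element of $T$. Choose $t_2$ maximising $v(t_2-t_1)=:\eta<\nu$. Then $x_2$ lies in the residue class $\Res(x_2-t_1,t_2-t_1)=0$, which is the open ball $b^o(t_1,\eta)$. Every $x$ in this open ball other than $t_1$ avoids $T$, but $t_1$ itself is in it, so this is not enough on its own. I would handle it by showing that $\psi_2$ with $c^3=c^1$ works, since $v(\tau(c^3)-\tau(c^1))=\infty$: the formula $\eta<v(x_2-t_1)<\infty$ holds of $a_2$ and excludes $t_1$ and all other elements of $T$. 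The same trick settles $|T|=1$ through $\psi_1(x_2;c^1,c^1)$ and covers any configuration with nothing above $\nu$. Combining the cases gives the piecewise $\mathbf k$-still, witnessed by $f_3$ for $\psi_3$; for the other formulas the witness is trivial.
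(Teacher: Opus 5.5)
Your argument is correct and is essentially the paper's proof, which also specialises the case analysis of Lemma~\ref{lemma: containment has USHD} to the singleton balls $\{\tau(b)\}$ and their joins: $\psi_4$ handles $a_2\in T$, $\psi_1$ is Case~1, $\psi_2$ is Case~3.1, and $\psi_3$ with $f_3$ is Case~3.2; your choice $c^3=c^1$ in $\psi_2$ is exactly the situation where the maximal sub-ball is the singleton $\{t_1\}$. The one thing to fix is the order of the case split: handle the $\psi_3$ case $\nu=v(t_2-t_1)$ first, because your bulleted claims for $\psi_1$ and $\psi_2$ (that $v(x-t_1)$ lies strictly between consecutive values) fail when $\nu$ is attained; also drop the fourth-bullet assertion that maximality forces $\nu$ to be attained, which is false (take $T=\{0,1\}$ and $0<v(a_2)<\infty$) and which your final paragraph makes unnecessary anyway.
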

\begin{proof}
    Given $(K,v)\models \ACVF$ and $B\fin K^3$, each point $\tau(b)$ with $b\in B$ can be identified with the closed ball of radius $\infty$ around $\tau(b)$. 
    Indeed, $b^f(\tau(b),\infty)$ is the singleton $\{\tau(b)\}$.
    Let $\mathcal{B}(B)$ be the closure of the collection of these balls under joins; $\mathcal{B}(B)$ contains all balls of the form $b^f\big(\tau(b),\tau(b')-\tau(b)\big)$ with $b,b'\in B$.
    Clearly, the $\varphi_I^0$-type of any point $a\in K$ over $B$ is determined by the containment-type of $a$ over $\mathcal{B}(B)$.
    A careful analysis of the case distinction in Lemma \ref{lemma: containment has USHD} in this special case (where the collection of balls consists only of points and their joins) now proves the statement.
    More precisely, $\psi_1$ covers the case where $a\notin\bigcup\mathcal{B}(B)$ (that is Case 1 in Lemma \ref{lemma: containment has USHD}), $\psi_2$ covers the case where $a$ is contained in some $b^o(\tau(b),\tau(b')-\tau(b))$ and not in any subballs of it contained in $\mathcal{B}(B)$ (that is Case 3.1 in the Lemma), and $\psi_3$  together with $f_3$ covers the case where $a$ is contained in some $b^f(\tau(b),\tau(b')-\tau(b))\in\mathcal{B}(B)$, but it is not contained in $b^o(\tau(b^*),\tau(b')-\tau(b))$ for any $b^*\in B$ (that is Case 3.2 in the Lemma).
    Finally, $\psi_4$ covers the case where $a=\tau(b)$ for some $b\in B$.
\end{proof}

We move on to proving the necessary ingredients for step $2$ as described above.
Note that we prove slightly more general statements than is necessary for the proof of Theorem \ref{thm: point line explicit bound}.
Call an $L_{\Gamma ,k}$-formula $\varphi(x,y)$ with $|x|=1$ a \emph{basic valuative formula} if it is of the form $v(f(x,y))<v(g(x,y))$ where $f,g \in \mathbb Z[x,y]$.
If $f(x,y)$ and $g(x,y)$ are linear in $x$, then we say that $\varphi(x,y)$ is \emph{linear}.
In what follows, we will refer to expressions of the form $\frac{\tau}{\tau'}(y)$ as \emph{terms}, where $\tau$ and $\tau'$ are $L_{\text{ring}}$-terms.
Whenever $\psi(x)$ is an $L_{\Gamma ,k}$-formula, we will write $\psi(\frac{\tau}{\tau'}(y))$ as shorthand for the formula 
\[\exists z\big(z\cdot\tau'(y)=1\wedge \psi(z\cdot \tau(y))\big).\]

\begin{lemma}\label{lem: linear swiss cheese}
    Let $\varphi(x,y)$ be a Boolean combination of linear basic valuative formulas. Then $\varphi$ has a piecewise $\mathbf k$-still of arity three.
    Moreover, this induces a DCD of exponent two, with a constant factor $C(q)$ polynomial in $q$, for the relation $\varphi^F\subseteq F^x\times F^y$.
\end{lemma}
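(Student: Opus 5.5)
Since Boolean combinations are handled by Lemma \ref{lem: Boolean combinations}, and conjunctions of piecewise stills for $\varphi_1,\dots,\varphi_n$ (same $x$, same $y$) stay piecewise, the plan is to treat a single linear basic valuative formula $\varphi(x;y)\equiv v(f(x,y))<v(g(x,y))$. Write $f=\alpha(y)x+\beta(y)$ and $g=\gamma(y)x+\delta(y)$. Whenever $\alpha(b),\gamma(b)\neq 0$, the roots are the single terms $r_f(b)=-\frac{\beta}{\alpha}(b)$ and $r_g(b)=-\frac{\delta}{\gamma}(b)$. The condition then reads
\[v(x-r_f(b))-v(x-r_g(b))<v\big(\tfrac{\gamma}{\alpha}(b)\big).\]
Running the Claim in the proof of Lemma \ref{lemma: swiss cheeses} with $N=2$, and checking directly, we want to show that $\varphi(K;b)$ is a Boolean combination of at most two balls. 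Each such ball is centred at $r_f(b)$ or $r_g(b)$, with radius $v(r_f(b)-r_g(b))$ or $v(\frac{\gamma}{\alpha}(b))$ (the exponents are $\pm1$, so no division occurs). The degenerate cases are handled separately and uniformly by finitely many formulas: $\alpha(b)=0$ or $\gamma(b)=0$, one or both of $f(x;b),g(x;b)$ identically zero or constant, or $r_f(b)=r_g(b)$. In those cases the set is empty, everything, a single ball, or the complement of a point.

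Next, as in Lemma \ref{lemma: containment has USHD} and Lemma \ref{lemma: step 1 of point line incidence bound}, we determine $\tp_\varphi(a/B)$ by the containment type of $a$ over the finite family of balls $\beta_b$ ($b\in B$) produced above. Each ball is definable from a single parameter $b$, with centre and radius given by terms in $b$. Going through Cases 1, 2, 3.1 and 3.2 of Lemma \ref{lemma: containment has USHD} gives finitely many isolating formulas. Each has the form $x\in\beta^1\wedge x\notin\beta^2$, where $\beta^1,\beta^2$ are balls from the family or joins of two of them, or the form $\Res\big(x-c,c'-c\big)=u_{\mathbf k}$, with $c,c'$ terms in parameters from $B$. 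Unlike in Lemma \ref{lemma: algebraic parameters in USHD}, the centres here are terms rather than merely algebraic over $B$. So these formulas are directly instances $\psi_i(x;c;u_{\mathbf k})$ with $c\in B^{t_i}$, witnessed by $f_i=\Res(x-c,c'-c)$, and no existential quantification over roots is needed. The parameter counts are: a ball needs one $b$, a join needs two, and a formula $x\in\beta^1\wedge x\notin\beta^2$ with $\beta^2$ a join needs three. This gives a piecewise $\mathbf k$-still of arity three.

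For the ``moreover'' part, Proposition \ref{prop: k-stills give DCDs} applies, since $\dcl$ of $F$-points meets $\mathbf k$ in $\mathbb F_q$, and gives a DCD for $\varphi^F$ with constant factor linear in $q$ (times the number of pieces). To bring the exponent down to two, we prune cells as in Remark \ref{remark: about containment and k-stills} (ii). Only arity-three cells of the form $\beta^1\setminus(\beta^2\sqcup\beta^3)$, $(\beta^1\sqcup\beta^2)\setminus\beta^3$ or $(\beta^1\sqcup\beta^2)\setminus(\beta^1\sqcup\beta^3)$ need to be controlled. We keep such a cell only when the removed ball is exactly the join of all family balls strictly inside the outer ball. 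The outer ball is fixed by at most two parameters, and by Lemma \ref{lemma: facts about balls} (iv) the canonical inner join is determined by the outer ball. So we get $O(|B|^2)$ cells of that type, and similarly in Case 3.1 the outer open ball together with its unique maximal proper subball in the join closure. Residue cells use two parameters and $q$ values, giving $O(q|B|^2)$ cells. The main obstacle I expect is this counting step: one must show that, for each outer ball, the canonical inner ball is unique even though it can be named by many parameter pairs, and choose one representative per ball so the cell count really is $O(q|B|^2)$ rather than $O(|B|^3)$. A secondary point is to check that the two balls per formula $\varphi(x;b)$ do not break the join-closure argument; they do not, because joins remain of the form $b^f(c,c'-c)$ with $c,c'$ among the $\le 2|B|$ centres.
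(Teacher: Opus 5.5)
Your overall route matches the paper's: reduce each linear constituent to a Swiss cheese with term-definable centres and radii, apply the containment still of Lemma \ref{lemma: containment has USHD} to the resulting balls, and prune as in Remark \ref{remark: about containment and k-stills} (ii). The gap is in your first step. You handle the Boolean combination through Lemma \ref{lem: Boolean combinations}, i.e.\ by conjoining stills for $\varphi_1,\dots,\varphi_n$. That concatenates the parameter variables, so the arity of the resulting piecewise still is the sum of the arities ($3n$), and the induced DCD has cells that are intersections, giving exponent $2n$ rather than two. The Boolean-combination case is exactly what is used later. In Proposition \ref{proposition: DCD for point line}, $\theta_2$ is a Boolean combination of two linear basic valuative formulas, and the count $O_q(|B|^{|\overline z^2|/2+2})=O_q(|B|^5)$ needs exponent two for the combination; with your reduction it would become $|B|^7$. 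The fix, which is what the paper does, is not to split. For each $b\in B$, collect the balls coming from \emph{all} constituents $\psi_\tau$, $\tau\in T(\varphi)$, into one family $\mathcal B(B)$. This still gives finitely many balls per $b$, each of the form $b^\star(\sigma(b),v(r(b)))$ with $\sigma,r$ from a fixed finite list of terms. Then observe that the containment type over $\mathcal B(B)$ determines $\tp_\varphi(a/B)$, and run the containment still once. Your closing remark about several balls per $b$ not breaking the join argument is precisely the needed observation; it just has to be applied across constituents instead of within one.

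Second, your radius list is wrong. Put $c_1=r_f(b)$, $c_2=r_g(b)$, $\eta=v(c_1-c_2)<\infty$ and $\nu=v(\frac{\gamma}{\alpha}(b))$. A direct computation gives $\varphi(K;b)=K\setminus b^f(c_1,\nu+\eta)$ if $\nu>0$, and $\varphi(K;b)=b^o(c_2,\eta-\nu)$ if $\nu\le 0$. So the radii you need are $v\big(\frac{\gamma}{\alpha}(c_1-c_2)\big)$ and $v\big(\frac{\alpha}{\gamma}(c_1-c_2)\big)$, and not $v(\frac{\gamma}{\alpha})$. The degenerate cases $\alpha(b)=0$ or $\gamma(b)=0$ additionally need $v(\frac{\beta}{\gamma})$ and $v(\frac{\delta}{\alpha})$. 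Containment in balls of radius $\eta$ or $\nu$ around $c_1,c_2$ does not decide membership in $b^f(c_1,\nu+\eta)$. Since the correct radii are still valuations of terms in the single parameter $b$, the rest of your argument survives this correction. Finally, the ``main obstacle'' you anticipate is not one. $\mathcal T(B)$ is a set of cells, so once the outer ball determines the canonical removed ball, all parameter tuples naming that cell contribute a single cell, and no choice of representatives is needed.
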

\begin{proof}
    For a tuple $\tau=(\tau_1,\tau_2,\tau_3,\tau_4)$ of $L_{\text{ring}}$-terms, consider the formula 
    \begin{align}\label{eq: linear formula}
        \psi_\tau(x,y)\ :\equiv\  v(\tau_1(y)x+\tau_2(y))<v(\tau_3(y)x+\tau_4(y)).
    \end{align}
    For $b\in K^y$, let $\mathcal{B}_\tau(b)$ be the collection of balls which have one of the following centers (excluding any undefined expressions)
    \begin{equation}\label{eq: linear exponent center terms}
        -\tau_2(b),\ -\tau_4(b),\ -\frac{\tau_2}{\tau_1}(b),\ -\frac{\tau_4}{\tau_3}(b)
    \end{equation}
    and whose radius is the image under $v$ of one of the following expressions (excluding any undefined expressions)
    \begin{align}\label{eq: linear exponent radius terms}
        0,\ \frac{\tau_4}{\tau_1}(b),\ \frac{\tau_2}{\tau_3}(b),\ \Big(\frac{\tau_4}{\tau_3}-\frac{\tau_2}{\tau_1}\Big)(b),\ \frac{\tau_3}{\tau_1}\Big(\frac{\tau_4}{\tau_3}-\frac{\tau_2}{\tau_1}\Big)(b),\ \frac{\tau_1}{\tau_3}\big(\frac{\tau_4}{\tau_3}-\frac{\tau_2}{\tau_1}\big)^{-1}(b).
    \end{align}

    \underline{Claim:} The set $\psi_\tau^K(K;b)$ is a Boolean combination of balls in $\mathcal{B}_\tau(b)$.
    
    \begin{proof}[Proof of Claim.]
    If $\tau_1(y)=\tau_3(y)=0$, the formula $\psi_\tau$ trivializes.
    If $\tau_1(y)=0\neq\tau_3(y)$, the formula $\psi_\tau$ is equivalent to $x\in b^{o}(-\frac{\tau_4}{\tau_3}(y),\frac{\tau_2}{\tau_3})$, and the case $\tau_1(y)\neq 0=\tau_3(y)$ is similar.
    Thus, we assume that $\tau_1(y)\neq0\neq\tau_3(y)$. In this case, the formula $\psi_\tau$ is equivalent to
    \begin{align*}
        v\Big(x+\frac{\tau_2}{\tau_1}(y)\Big)-v\Big(x+\frac{\tau_4}{\tau_3}(y)\Big)<v\Big(\frac{\tau_3}{\tau_1}(y)\Big)
    \end{align*}
    and the statement follows from the claim proved in Lemma \ref{lemma: swiss cheeses}.
    \end{proof}

    By assumption, $\varphi(x,y)$ is a Boolean combination of formulas of the form $\psi_\tau$ as in (\ref{eq: linear formula}).
    Let $T(\varphi)$ be the finite set of tuples $\tau$ where $\psi_\tau$ appears in $\varphi$.
    Fix a set $B\fin K^y$ and
    let $\mathcal{B}(B)$ be the union of all sets $\mathcal{B}_\tau(b)$ with $b\in B$ and with $\tau\in T(\varphi)$.
    By the claim, for any $a\in K$, the $\varphi$-type of $a$ over $B$ is isolated by the containment-type of $a$ over $\mathcal{B}(B)$.
    Containment in balls has a piecewise $\mathbf{k}$-still of arity three by Lemma \ref{lemma: containment has USHD} and Remark \ref{remark: about containment and k-stills} (ii).
    Existence of a piecewise $\mathbf k$-still of arity three for $\varphi$ now follows from the fact that each ball in $\mathcal{B}(B)$ is of the form $b^\star(\sigma(b),r(b))$ for some $b\in B$, $\star\in\{o,f\}$, and with $\sigma$ and $r$ coming from the finite set of terms defined in (\ref{eq: linear exponent center terms}) and (\ref{eq: linear exponent radius terms}), with $\tau\in T(\varphi)$.
    A piece will correspond to one of the pieces of the piecewise $\mathbf k$-still for containment from Lemma \ref{lemma: containment has USHD} and a pair $(\sigma,r)$.
    The moreover-part of the statement then follows from Remark \ref{remark: about containment and k-stills} (ii) and the fact that $|\dcl(F)\cap\mathbf{k}(K)|=q$.
\end{proof}

The second step of our proof strategy requires us to find $\mathbf{k}$-stills for formulas including a variable in the residue field sort.
We call a partitioned formula $\varphi(x;y,u_\mathbf{k})$ with $|x|=1$ a \emph{basic residue formula} if it is of the form
\[\Res(f(x,y),g(x,y))={u}_{\mathbf k}\]
where $f,g\in\mathbb Z[x,y]$.
We say that such a formula $\varphi$ is \emph{linear} if $f$ and $g$ are linear in $x$.

\begin{lemma}\label{lemma: linear residue images}
    Let $\varphi(x;y,u_{\mathbf k})$ be a linear basic residue formula. Then $\varphi$
    has a piecewise $\mathbf{k}$-still of arity five. 
    Moreover, this induces a DCD of exponent three for $\varphi^F$ with a constant factor $C(q)$ polynomial in $q$.
\end{lemma}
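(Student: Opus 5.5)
We specialize the proof of Lemma \ref{lemma: determining images in the residue field} to a single pair $f,g$ linear in $x$, write the relevant roots as terms, and then count parameters. Write $f(x,y)=\tau_1(y)x+\tau_2(y)$ and $g(x,y)=\tau_3(y)x+\tau_4(y)$. Degenerate cases $\tau_1(b)=0$ or $\tau_3(b)=0$ are handled separately, since there $\Res(f,g)(x,b)$ is either constant or, up to a constant factor in $\mathbf k$ determined by $b$, a residue of $x-\sigma(b)$ divided by a field element. In the main case, $\Res(f,g)(a,b)$ is zero unless $v(a+\frac{\tau_2}{\tau_1}(b))-v(a+\frac{\tau_4}{\tau_3}(b))=v(\frac{\tau_3}{\tau_1}(b))$. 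In that case it equals
\[\res\Big(\tfrac{\tau_1}{\tau_3}(b)\cdot\tfrac{a-c_1(b)}{a-c_2(b)}\Big),\qquad c_1:=-\tfrac{\tau_2}{\tau_1},\; c_2:=-\tfrac{\tau_4}{\tau_3}.\]
So the $\varphi$-type of $a$ over $B\times\mathbf k$ is determined by two pieces of data. The first is the $\psi_\tau$-type of $a$ over $B$, for which Lemma \ref{lem: linear swiss cheese} supplies a piecewise $\mathbf k$-still of arity three inducing a DCD of exponent two. The second is the tuple of values $\Res(f,g)(a,b)$ for those $b\in B$ with $v(f(a,b))=v(g(a,b))\neq\infty$.

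For the second piece of data we follow the case distinction of Lemma \ref{lemma: determining images in the residue field} with $C=\{c_1(b),c_2(b):b\in B\}$. Let $\nu_0$ be the maximal value of $v(a-c)$ over $c\in C$ and $C_0$ the set of $c\in C$ attaining it. In Case~1 there are $c,c'\in C_0$ with $v(c-c')=\nu_0$. Then $\pi(x)=\res\big(\frac{x-c}{c'-c}\big)$ determines all residue values, together with the formula $v(x-c)=v(c'-c)$, which checks that no other centre lies closer to $x$. This piece uses only two parameters from $B$, one per centre. In Case~2, $C_0$ is a single $\sim$-class. Since the exponent $k(b)$ is in $\{-1,0,1\}$, we need one witness $b_1$ with $k(b_1)=\pm1$ and $v(f(a,b_1))=v(g(a,b_1))$, so that $\Res(f,g)(x,b_1)$ records $\res\big(\frac{x-c_0}{a-c_0}\big)$. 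We also need the annulus condition (i) from the claim in that proof, namely $v(c_1-c_0)<v(x-c_0)<v(c_0'-c_0)$, which involves centres coming from at most three further elements of $B$. The sign of $k$ is harmless because $\alpha\mapsto\alpha^{-1}$ is injective. Conjoining with the Lemma \ref{lem: linear swiss cheese} piece (arity three) yields pieces whose parameter tuples come from at most five elements of $B$, because parameters can be shared. The witnessing maps are all of the form $\Res$ of terms in $x$ and $b$, as required. I expect this bookkeeping, ensuring that every piece really uses at most five elements of $B$ and that the coding via Remark \ref{remark: facts about relative USHDs} (iii) does not add more, to be the fiddliest part.

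For the ``moreover'' part, we apply Proposition \ref{prop: k-stills give DCDs} with $|D_0|\le q^{O(1)}$. This gives a DCD, and we then prune it as in Remark \ref{remark: about containment and k-stills} (ii). The main obstacle is showing that the exponent drops to three. The plan is to observe that in each piece most parameters are \emph{determined} by fewer ones, given $B$. For instance, in Case~2 the annulus is cut out by $c_0$ together with the join of the balls of $\mathcal B(B)$ strictly containing it, and by the maximal such ball strictly inside it. As in Remark \ref{remark: about containment and k-stills} (ii), we keep a cell only when these auxiliary parameters are the canonical choices relative to $B$. The residue witness $b_1$ can likewise be fixed canonically, for example as the first $b$ in a fixed enumeration of $B$ with $k(b)=\pm1$ and $c_0\in\{c_1(b),c_2(b)\}$. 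After this pruning, each kept cell is indexed by at most three free elements of $B$ and a residue tuple from $\mathbb F_q^{O(1)}$. This gives $|\mathcal T(B)|\le C(q)|B|^3$ with $C(q)$ polynomial in $q$.
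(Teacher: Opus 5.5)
\textbf{There is a genuine gap in the counting.} That counting is the whole content of this lemma: that $\varphi$ has \emph{some} $\mathbf k$-still already follows from Proposition \ref{prop: res formulas are relatively distal}. What must be shown is arity five and exponent three.

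In your Case~2 the piece is a conjunction of three things: the annulus (centres from three elements of $B$), the residue witness $b_1$ (one more element), and a piece of the still from Lemma \ref{lem: linear swiss cheese} (three more elements). That is seven elements of $B$, or six if you take $c_0$ to be the $C_0$-centre of $b_1$. In the subcase with no witness it is still $3+3=6$. ``Parameters can be shared'' does not close this. The three elements used by the valuative still are chosen by the containment analysis of Lemma \ref{lemma: containment has USHD}, and there is no reason they coincide with the elements giving $c_0,c_0'$ and the outer point of the annulus.

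The exponent count has the same problem. For fixed $c_0$ there can be up to $|C|$ annuli around it, so an annulus costs a centre plus a level (two free elements). The valuative cells already form a DCD of exponent two, and your pruning does not explain how $2+2$ becomes three. Your canonical choice of $b_1$ via a fixed enumeration also ignores the admissibility condition $v(f(a,b_1))=v(g(a,b_1))\neq\infty$, which depends on $a$.

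\textbf{How the paper avoids this.} It never uses the full valuative still here. It splits Case~2 by the set $S$ of $b\in B$ with exactly one centre in $C_0$; for $b\notin S$ the value is already fixed on the annulus.
\begin{itemize}
\item If some $b\in S$ has non-zero value, the annulus plus that single residue value suffice, with no valuative information (four elements). The non-zero value forces $v(x-c_0)=\nu_0$ and then $v(x-a)>\nu_0$, so Lemma \ref{lemma: determining images under the residue map} applies.
\item If every $b\in S$ has value $0$, you only need that $v(x-c_0)$ avoids, for each $b\in S$, the single value at which $v(f(x,b))=v(g(x,b))$. These values are constant on the annulus, so the two consecutive ones around $\nu_0$ suffice (five elements).
\end{itemize}
After pruning, the outer point of the annulus is determined by $c_0$ and $c_0'$, and the upper of the two bounding values is determined by the lower one. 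This leaves three free elements.

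Alternatively, you could try to rescue your own decomposition by showing that each cell of the valuative still already lies inside the annulus. Then the annulus parameters could be \emph{dropped} rather than shared. That argument needs an inspection of the cells produced in Lemma \ref{lemma: containment has USHD}, and your proposal does not contain it.
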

\begin{proof}
    We may assume that $y=(y_1,y_2)$ is binary and $\varphi$ is of the form 
    \[\Res(\sigma(y)(x-y_1),\tau(y)(x-y_2))=u_{\mathbf k}\]
    where $\sigma$ and $\tau$ are $L_{\text{ring}}$-terms.
    The proof of the general case is analogous.
    If $\tau(y)$ is the constant zero term, then $\varphi$ trivializes, so we may ignore the term $\tau$ by replacing $\sigma$ with $\frac{\sigma}{\tau}$.

    Fix $(K,v)\models \ACVF$, $a\in K$ and finite set $B\subseteq K^2$. 
    Write $f(x,y):=\Res\big(\sigma(y)(x-y_1),(x-y_2)\big)$.
    We follow the proof of Lemma \ref{lemma: determining images in the residue field}; 
    we make a case distinction and show that in each case, there is a formula $\psi(x,\overline{b})$ and possibly a definable map $\rho(x,\overline{b})$ into $\mathbf k$, with $\psi$ and $\rho$ independent of $a$ and $B$, and with $\overline{b}\in B^{\leq 5}$, such that the formula $\psi(x;\overline{b})\wedge\rho(x;\overline{b})=\rho(a,\overline{b})$ determines the image of $a$ under the family of maps $(f^K(x;b))_{b\in B}$.
    This induces a piecewise $\mathbf k$-still for $\varphi$ by setting $\psi'(x;z;u_{\mathbf k}):\equiv \psi(x;z)\wedge \rho(x,z)=u_{\mathbf k}$.

    Let $C$ denote the set of points $c\in K$ for which $(c,d)\in B$ or $(d,c)\in B$ for some $d\in K$.
    Let $\nu_0:=\max\{v(a-c):c\in C\}$ and $C_0:=\{c\in C:v(a-c)=\nu_0\}$.
    For $c,c'\in C_0$, we write $c\sim c'$ whenever $v(c-c')>\nu_0$.
    Thus, $c\not\sim c'$ if and only if $v(c-c')=\nu_0$.
    \\

    \underline{Case 1:} There exist $c^1,c^2\in C_0$ with $c^1\not\sim c^2$. 
    
    Set $\alpha:=\res_v(\frac{a-c^1}{c^2-c^1})$.
    By Lemma \ref{lemma: determining images under the residue map}, the image of $a$ under $(f^K(x;b))_{b\in B}$ is determined by the formula
    \begin{align}\label{eq: linear residue, piece 1}
        \Res(x-c^1,c^2-c^1)=\alpha.
    \end{align}

    \underline{Case 2:}
    $|C_0/\sim|=1$.
    
    Fix some $c^1\in C_0$ and find $c^2\in C\setminus C_0$ such that $v(c^2-c^1)=\max\{v(c-c^1)\}_{c\in C\setminus C_0}$. 
    On the other hand, find $c^3\in C_0$ such that $v(c^3-c^1)=\min\{v(c-c^1)\}_{c\in C_0}$.
    Define 
    \[\psi(x;c^1,c^2,c^3):\equiv\ v(c^2-c^1)<v(x-c^1)<v(c^3-c^1).\]

    By the second part of the formula, we get
    \begin{align}\label{eq: residue linear distance from C_0}
        a'\in \psi^K(K;c^1,c^2,c^3)\implies \bigwedge_{c\in C_0}\res_v\big(\frac{a'-c}{a'-c^1}\big)=1.
    \end{align}
    By the first part of the formula, we get
    \begin{align}\label{eq: residue linear far aways balls}
        a'\in \psi^K(K;c^1,c^2,c^3)\implies  \bigwedge_{c\in C\setminus C_0}\res_v\big(\frac{a'-c}{a-c}\Big)=1. 
    \end{align}
    In particular, for $S:=\{(b_1,b_2)\in B: b_1\in C_0\not\leftrightarrow b_2\in C_0\}$ and $S':=B\setminus S$, we get
    \begin{align}\label{eq: linear residue S'}
        a'\in\psi^K(K;c^1,c^2,c^3)\implies \bigwedge_{b\in S'} f^K(a'
        ,b)=f^K(a,b).
    \end{align}

    \underline{Case 2.1:} There is $b^1\in S$ with $f^K(a,b^1)\neq0$.

    In this case, setting $\alpha:=f(a,b^1)$, an argument as in Case 2 of the proof of Lemma \ref{lemma: determining images in the residue field} shows that  the image of $a$ under $(f^K(x;b))_{b\in B}$ is determined by the formula
    \begin{align}\label{eq: linear residue, piece 2}
        \psi(x;c^1,c^2,c^3)\wedge f(x;b^1)=\alpha.
    \end{align}

    \underline{Case 2.2:} For each $b\in S$, we have $f^K(a,b)=0$. 

    Equivalently, for each $b=(b_1,b_2)\in S$, we have 
    \[v\big(\sigma(b)\big)+v\big(a-b_1)\neq v(a-b_2).\]
    We define $S^1:=\{b\in S:b_1\in C_0\}$, $S^2:=\{b\in S:b_2\in C_0\}$, and 
    \[\mathcal{V}(a):=\ \big\{\overbrace{v(a-b_2)-v(\sigma(b))}^{=:\mu_1(a,b)}: b\in S^1\big\}\ \cup\ \big\{\overbrace{v(\sigma(b))+v(a-b_1)}^{=:\mu_2(a,b)}: b\in S^2\big\}.\]
    Recall that $\nu_0=v(a-c)$ for each $c\in C_0$ and find consecutive points $\mu_{i_1}(a,b^1),\mu_{i_2}(a,b^2)\in\mathcal{V}(a)$ with $\mu_{i_1}(a,b^1)<\nu_0<\mu_{i_2}(a,b^2)$.
    Then the image of $a$ under $(f^K(x;b))_{b\in B}$ is determined by the formula
    \begin{equation}\label{equation: residue images ordering}
        \psi(x;c^1,c^2,c^3)\wedge \mu_{i_1}(x;b^1)<v(x-c^1)<\mu_{i_2}(x;b^2).
    \end{equation}
    Indeed, if $a'\in \psi(K;c^1,c^2,c^3)$, then $\mu_i(a',b)=\mu_i(a,b)$ for each $b\in S^i$ by (\ref{eq: residue linear far aways balls}).
    We conclude that $\mu_{i_1}(a',b^1)$ and $\mu_{i_2}(a',b^2)$ are consecutive points in $\mathcal{V}(a')$ and thus by (\ref{equation: residue images ordering}),
    \begin{align*}
        &\forall b\in S^1:\ v(a'-b_2)-v(\sigma(b))\neq v(a'-c^1)\overset{\text{by }(\ref{eq: residue linear distance from C_0})}{=}v(a'-b_1),\\
        &\forall b\in S^2:\ v(\sigma(b))+v(a'-b_1)\neq v(a'-c^1)\overset{\text{by }(\ref{eq: residue linear distance from C_0})}{=} v(a'-b_2).
    \end{align*}
    In particular, $f^K(a',b)=0$ for each $b\in S$, and so $(f^K(a',b))_{b\in B}=(f^K(a,b))_{b\in B}$,
    by (\ref{eq: linear residue S'}).\newline

    This concludes the case distinction. 
    For the ``Moreover"-part of the statement, restrict attention to the sub-field $F\leq K$, where $\mathbf k(F)=\mathbb F_q$.
    The formulas defined in the case distinction above yield a family of relations, as follows.
    Formula (\ref{eq: linear residue, piece 1}) from Case 1 induces a family of relations, parametrised by $\alpha\in \mathbb F_q$, and a tuple $\epsilon\in\{1,2\}^2$; namely
    \[R_{1}^{\epsilon,\alpha}:=\Big\{(a,b^1,b^2)\in F\times F^2\times F^2: \res_v\Big(\frac{a-b^1_{\epsilon(1)}}{b^2_{\epsilon(2)}-b^1_{\epsilon(1)}}\Big)=\alpha\Big\}.\]
    The tuple $\epsilon$ encodes whether $c_1$ and $c_2$, respectively, are the first or second coordinate of a point in $B$.
    Similarly, the formula (\ref{eq: linear residue, piece 2}) from Case 2.1 induces a family of relations

    \[R_{2.1}^{\epsilon,\alpha}:=\Big\{(a,b^1,b^2,b^3,b^4)\in F\times (F^2)^4:\ a\in\psi^K(K;b^1_{\epsilon_1},b^2_{\epsilon_2},b^3_{\epsilon_3})\text{ and } f^{K}(a;b^4)=\alpha\Big\}\]
    indexed by $\epsilon\in\{1,3\}^3$ and $\alpha\in\mathbb F_q$, and formula (\ref{equation: residue images ordering}) from Case 2.2 induces a family of relations 

    \begin{align*}
        R_{2.2}^{\epsilon,i}:=\Big\{(a,b^1,b^2,b^3,b^4,b^5)\in F\times (F^2)^5:\ &a\in\psi^K(K;b^1_{\epsilon_1},b^2_{\epsilon_2},b^3_{\epsilon_3})\text{ and } \\
        & \mu_{i_1}(a,b^4)<v(a-b^1_{\epsilon_1})<\mu_{i_2}(a,b^5)\Big\},
    \end{align*}
    indexed by $i\in\{1,2\}^2$ and $\epsilon\in\{1,2\}^3$.
    Combining these yields a DCD $\mathcal{T}$ for the binary relation $\varphi^F\subseteq F\times (F^2\times \mathbb F_q)$, where for each finite set $D\subseteq F^2\times \mathbb F_q$, the cells in $\mathcal{T}(D)$ are fibers of the form
    \[\big(R^{\epsilon,\alpha}_1\big)_{(b^1,b^2)},\ \big(R^{\epsilon,\alpha}_{2,1}\big)_{(b^1,b^2,b^3,b^4)},\text{ or }\big(R^{\epsilon,i}_{2.2}\big)_{(b^1,b^2,b^3,b^4,b^5)},\]
    where each $b^j$ is in the projection of $D$ to the first two coordinates, written $B:=\pi_{F^2}(D)$.
    By simply counting parameters, this DCD can be seen to have exponent $\leq 5$.
    To reduce it to three, we count more carefully, and remove unnecessary cells.  
    Firstly, a cell of the form $\big(R^{\epsilon,\alpha}_{2.1}\big)_{(b^1,b^2,b^3,b^4)}$ should only be included in $\mathcal{T}(D)$ if 
    \begin{align}\label{eq: linear residue b2 determined}
        v(b^2_{\epsilon_2}-b^1_{\epsilon_1})=\max\big\{v(b_{\epsilon_2}-b^1_{\epsilon_1}):b\in B\text{ and } v(b_{\epsilon_2}-b^1_{\epsilon_1})<v(b^3_{\epsilon_3}-b^1_{\epsilon_1})\big\}.
    \end{align}
    After fixing  $b^1$, $b^3$, $b^4$, $\epsilon$ and $\alpha$, this leaves at most one cell of the form $(R_{2.1}^{\epsilon,\alpha})_{(b^1,b^*,b^3,b^4)}$, by the definition of the formula $\psi$. 
    In particular, after removing cells accordingly, there are at most $O(q|B|^3)$-many cells of the form $(R_{2.1}^{\epsilon,\alpha})_{\overline{b}}$ contained in $\mathcal{T}(D)$.
    
    Secondly, a cell of the form $\big(R^{\epsilon,i}_{2.2}\big)_{(b^1,b^2,b^3,b^4,b^5)}$ should only be included in $\mathcal{T}(D)$ if, again, (\ref{eq: linear residue b2 determined}) holds, and moreover, $\mu_{i_1}(b^1_{\epsilon_1},b^4)$ and $\mu_{i_2}(b^1_{\epsilon_1},b^5)$ are consecutive elements in 
    \begin{align*}
        \bigcup_{i\in\{1,2\}}\{\mu_i(b^1_{\epsilon_1},b): v(b_{i'}-b^1_{\epsilon_1})<v(b^3_{\epsilon_2}-b^1_{\epsilon_1})=v(b_{i}-b^1_{\epsilon_1})\},
    \end{align*}
    where $1'=2$ and $2'=1$.
    Indeed, this is sensible because if $a\in \psi^K(K;b^1_{\epsilon_1},b^2_{\epsilon_2},b^3_{\epsilon_3})$ then $\mu_i(a,b)=\mu_i(b^1_{\epsilon_1},b)$ whenever $b$ is as in the set above. 
    Fixing $b^1$, $b^3$, $b^4$, $\epsilon$ and $i$ this leaves at most one cell of the form  $\big(R_{2.2}^{\epsilon,i}\big)_{(b^1,b^*,b^3,b^4,b')}$ in $\mathcal{T}(D)$.
    Thus, after removing unnecessary cells, $\mathcal{T}(D)$ contains $O(|B|^3)$-many cells of the form $\big(R_{2.2}^{\epsilon,i}\big)_{\overline{b}}$.
    This concludes the proof.
\end{proof}

We are now ready to prove the central ingredient for the point-line incidence bound claimed in Theorem \ref{thm: point line explicit bound}.
Recall that the point-line incidence graph $I\subseteq F^2\times F^2$ is precisely the interpretation $\varphi_I^F$ of the formula $\varphi_I(x_1,x_2;y_1,y_2)\equiv x_2=x_1y_1+y_2$.

\begin{prop}\label{proposition: DCD for point line}
    The formula $\varphi_I(x,y)$ has a piecewise $\mathbf{k}$-still of arity seven.
    Moreover, this induces a DCD of exponent five for $\varphi_I^F$, with a constant factor $C(q)$ polynomial in $q$.
\end{prop}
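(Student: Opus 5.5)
We follow the inductive argument of Lemma \ref{lem: relative distality in one variable enough}, specialised to $\varphi_I$ with $x=(x_1,x_2)$, taking $x_2$ as the inner variable and $x_1$ as the outer one. \textbf{Step 1} is already done. By Lemma \ref{lemma: step 1 of point line incidence bound}, the formula $\varphi_I^0(x_2;x_1,y)$ has the piecewise $\mathbf k$-still $(\psi_i)_{i\leq 4}$. Each piece $\psi_i(x_2;z^1,\dots;u_{\mathbf k})$ uses parameters $z^j=(x_1,b^j_1,b^j_2)$. After substituting the fixed value $a_1$ for $x_1$ as in Lemma \ref{lem: relative USHDs with parameters}, each piece uses at most three lines $b^j\in B$: the ones with $\tau(a_1,b^j)=a_1b^j_1+b^j_2$ serving as centres. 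The piece also uses possibly one residue parameter $\alpha=f_3(a)$.

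\textbf{Step 2.} For each $i$, let $\theta_i(x_1;\bar b,\alpha,b)$ express that $\psi_i(x_2;x_1,\bar b;\alpha)$ is not crossed by the line $x_2=x_1b_1+b_2$. We then rewrite each $\theta_i$ explicitly as a Boolean combination of formulas in $x_1$ alone. This is where the remark in the AI statement is used: ``the set defined by $\psi_i$ does not contain $x_1b_1+b_2$ unless it is that singleton'' is simply the negation, or the singleton case, of $\psi_i(x_1b_1+b_2;x_1,\bar b;\alpha)$.
\begin{itemize}
\item For $\psi_1,\psi_2,\psi_4$, substituting gives formulas $v(x_1(b_1-b^1_1)+(b_2-b^1_2))\lessgtr v(x_1(b^2_1-b^1_1)+(b^2_2-b^1_2))$. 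These are Boolean combinations of linear basic valuative formulas in $x_1$, with parameters $(\bar b,b)$.
\item For $\psi_3$, the substitution gives the linear basic residue formula $\Res\big(x_1(b_1-b^1_1)+(b_2-b^1_2),\,x_1(b^2_1-b^1_1)+(b^2_2-b^1_2)\big)=\alpha$, with the residue variable as a parameter.
\end{itemize}
Lemmas \ref{lem: linear swiss cheese} and \ref{lemma: linear residue images} then yield piecewise $\mathbf k$-stills for these formulas. As in the proof of Lemma \ref{lem: relative distality in one variable enough}, the parameters of $\theta_i$ that range over $B$ are the lines $b$. The fixed data $\bar b$ (and $\alpha$) are absorbed by Lemma \ref{lem: relative USHDs with parameters}: they are treated as part of the formula, and re-added to the final still as up to three parameters from $B$.

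\textbf{Combination.} Conjoining $\psi_i$ with the stills for $\theta_i^{\pm}$ as in the formula $\rho$ of Lemma \ref{lem: relative distality in one variable enough}, and coding over $i$ by Remark \ref{remark: facts about relative USHDs} (iii), gives a piecewise $\mathbf k$-still for $\varphi_I$. Its arity is at most $2$ (parameters of $\psi_i$ beyond the centre, or three counted carefully) plus $5$ from the step-2 stills, i.e. seven. Here we also use that $\theta^+$ and $\theta^-$ can share one still, since any still for $\theta$ is one for $\neg\theta$.

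\textbf{Main obstacle: exponent five.} The delicate part is the cell count giving exponent five rather than seven. We will use the pruning already built into the ``moreover'' parts of Lemmas \ref{lem: linear swiss cheese} and \ref{lemma: linear residue images}, which give DCD exponents two and three for the inner formulas. Combined with the at most two genuinely free lines from step 1, this bounds the number of cells by $O(C(q)\,q\,|B|^{2+3})$.

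We expect the main difficulty to be checking that this pruning remains valid once the step-1 parameters vary. The issue is that the step-2 cells must depend only on $(\bar b,\alpha)$ and $B$, and not on $a$, so that discarding non-extremal choices is legitimate. The constant factor is polynomial in $q$ because $\alpha$ ranges over $\mathbb F_q=\dcl(F)\cap\mathbf k(K)$, combined with Remark \ref{remark: poly dependence relative distality bound}.
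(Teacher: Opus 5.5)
Your plan is the paper's proof: the step-1 pieces of Lemma~\ref{lemma: step 1 of point line incidence bound}, one substituted formula $\theta_i$ per piece, and inner stills from Lemmas~\ref{lem: linear swiss cheese} and~\ref{lemma: linear residue images}, with the step-1 data absorbed via Lemma~\ref{lem: relative USHDs with parameters}. Where it goes wrong is the bookkeeping.

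The piece $\psi_2$ uses three lines $c^1,c^2,c^3$ from $B$, and all three are parameters of the final still. You give no reason why one of them should be redundant: the maximality that singles out $c^2$ is a condition on the points $\tau(a_1,b)$, not on the parameters alone. So ``at most two genuinely free lines'' is unjustified. Likewise your arity bound ``$2$ (or three) plus $5$'' is not valid, since three plus five is eight.

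Both numbers come out right only through the pairing your step 2 already sets up but your count ignores. The formulas $\theta_1,\theta_2$ (and your $\theta_4$) are valuative, so they get stills of arity $3$ and DCDs of exponent $2$. The residue formula $\theta_3$, with arity $5$ and exponent $3$, sits only over the two-line piece $\psi_3$. Hence the arity is $\max(2+3,\,3+3,\,2+5,\,1+3)=7$. The exponent five comes from $3+2$ for $\psi_2$ and $2+3$ for $\psi_3$, with the remaining pieces contributing at most $5$.

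The obstacle you flag is handled fibrewise, as in the paper. Fix step-1 data $(c,\alpha)$, with $c$ from $B$ and $\alpha\in\mathbb F_q$. The pruned DCD $\mathcal T_i(B\times\{(c,\alpha)\})$ for $\theta_i$ depends only on $B,c,\alpha$, covers $F$, and has cells uncrossed by every $\theta_i(\cdot\,;b,c,\alpha)$ with $b\in B$. Keep a combined cell $\psi_i'(x_2;x_1,c,\alpha)\wedge\chi'_{i,j}(x_1;d,\beta,c,\alpha)$ only when its $x_1$-part lies in this family. Covering survives because any uncrossed $x_1$-cell containing $a_1$ does the job, not only the one the still selects. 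Multiplying the $O_q(|B|^{e_i})$ cells of each fibre by the number of choices of $(c,\alpha)$ gives the counts above.

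Finally, sending $\psi_4$ through step 2 is correct and actually needed. The paper's piece for $i=4$ is the bare line $x_2=x_1z^1_2+z^1_3$, which is crossed by every other line of $B$ meeting it, so it is not a valid cell. Your version costs only arity $1+3$ and exponent $1+2$, so it does not affect the final bounds.
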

\begin{proof}
    Let $\psi_1(x_2;z^1,z^2)$, $\psi_2(x_2;z^1,z^2,z^3)$, $\psi_3(x_2;z^1,z^2,u_{\mathbf k})$ and $f_3(x_2,z^1,z^2)$ be as in the statement of Lemma \ref{lemma: step 1 of point line incidence bound}.
    Recall that each $z^j$ is a ternary $\mathbf{K}$-variable, say $z^j=(z^j_1,z^j_2,z^j_3)$.
    Let $z^j_{23}$ be the variable $(z^j_2,z^j_3)$.
    We write $\overline{z}^2$ for the variable $(z^1_{23},z^2_{23},z^3_{23})$ and $\overline{z}^1=\overline{z}^3$ for the variable $(z^1_{23},z^2_{23})$.
    Let $\psi_1'(x_2;x_1,\overline{z}^1)$, $\psi_2'(x_2;x_1,\overline{z}^2)$, and $\psi_3'(x_2;x_1,\overline{z}^3,u_{\mathbf k})$ be obtained by replacing $z^1_1$, $z^2_1$, and $z^3_1$ by $x_1$ in $\psi_1$, $\psi_2$, and $\psi_3$, respectively.
    Define $f'_3(x_2;x_1,\overline{z})$ similarly.
    We write $u_{\mathbf k}^3=u_{\mathbf k}$, and $u_{\mathbf k}^1=u_{\mathbf k}^2=\emptyset$ where $\emptyset$ is the empty variable.

    In the next step, we find formulas $\theta_i(x_1;y,\overline{z}^i,u_{\mathbf k}^i)$, expressing that the set defined by $\psi_i(x_2;x_1,\overline{z}^i,u_{\mathbf k}^i)$ does not contain $\tau(x_1,y):=x_1y_1+y_2$. For $i\leq 3$, define
    \begin{align*}
        &\theta_i(x_1;y,\overline{z}^i,u_{\mathbf k}^i):\equiv\neg\psi_i'\big(\tau(x_1,y),x_1,\overline{z}^i,u_{\mathbf k}^i \big).
    \end{align*}
    For $i\leq 2$, the partitioned formula $\theta_i$ is a linear basic valuative formula, and therefore has a piecewise $\mathbf{k}$-still $(\chi_{i,j}(x_1;s^i;w^i_{\mathbf k}))_{j\leq \ell_i}$ of arity three, by Lemma \ref{lem: linear swiss cheese}.
    On the other hand, the partitioned formula $\theta_3$ is a linear basic residue formula, and therefore has a piecewise $\mathbf{k}$-still $(\chi_{3,j}(x_1;s^3;w^3_{\mathbf k}))_{j\leq \ell_3}$ of arity five, by Lemma \ref{lemma: linear residue images}.
    In particular, $s^1$ and $s^2$ are concatenations of three disjoint copies of $(y,\overline{z}^1)$ and $(y,\overline{z}^2)$, respectively, while $s^3$ is a concatenation of five disjoint copies of $(y,\overline{z}^3,u_{\mathbf k})$.
    Write $\overline{y}^i$ for the variable consisting of only the (three or five) disjoint copies of $y$ contained in $s^i$. 
    For each $j\leq \ell_i$, let $\chi_{i,j}'(x_1;\overline{y}^i,w^i_{\mathbf k};\overline{z}^i,u_{\mathbf k}^i)$ be obtained by replacing in
    $\chi_{i,j}(x_1,s^i,w^i_{\mathbf k})$ each of the disjoint copies of the variable $(\overline{z}^i,u_{\mathbf k}^i)$ coming from $s^i$ with the variable $(\overline{z}^i,u_{\mathbf k}^i)$ itself.

    Finally, we define, for each $i\leq 3$ and $j\leq \ell_i$,
    \[\zeta_{i,j}(x_1,x_2;\overline{z}^i,\overline{y}^i;\,u_{\mathbf k}^i ,w^i_{\mathbf k}):\equiv \psi_i'(x_2,x_1,\overline{z}^i,u_{\mathbf k}^i )\wedge \chi_{i,j}'(x_1,\overline{y}^i,w^i_{\mathbf k},\overline{z}^i,u_{\mathbf k}^i).\]
    To cover the case where $\tp_{\varphi_I}(a/B)\vdash \varphi_I(a,b)$ for some $b\in B$, we set $\ell_4:=1$, $\overline{z}^4=z^1_{23}$, $u_{\mathbf k}^4=w_{\mathbf k}^4=\overline{y}^4=\emptyset$, and
    $\zeta_{4,1}(x_1,x_2;\overline{z}^4,\overline{y}^4;u_{\mathbf k}^4,w_{\mathbf k}^4):\equiv x_2=x_1z^1_2+z^1_3$.\newline

    \underline{Claim:} The family $\big(\zeta_{i,j}(x_1,x_2;\overline{z}^i,\overline{y}^i;u_{\mathbf k}^i,w^i_{\mathbf k})\big)_{i\leq 4,j\leq \ell_i}$ is a piecewise $\mathbf{k}$-still for $\varphi_I$ of arity seven.
    \begin{proof}
        Let $(K,v)\models \ACVF$, $a\in K^2$, and $B\fin K^2$.
        If $\varphi_I(a,b)$ holds for some $b\in B$, then $\varphi_I(x;b)\equiv \zeta_{4,1}(x;b)$ clearly isolates $\tp_{\varphi_I}(a/B)$.
        Suppose $\varphi_I(a,b)$ does not hold for any $b\in B$.
        By our choice of $(\psi'_i)_i$ and $f'_3$, and by Lemma \ref{lem: relative USHDs with parameters}, there is some $i\leq 3$ and a tuple $c$ from $B$ such that for $\alpha:=f_3'(a_2,a_1,c)$ (or $\alpha=\emptyset$ if $i\leq 2$),
        \begin{align}\label{eq: isolating property}
            (K,v)\models \forall x_2\big(\psi_i'(x_2;a_1,{c},\alpha) \rightarrow \bigwedge_{b\in B}  x_2\neq \tau(a_1,b)\big).
        \end{align}
        Equivalently,
        \[ (K,v)\models \bigwedge_{b\in B} \underbrace{\neg\psi_i'(\tau(a_1,b),a_1,c,\alpha)}_{\equiv \theta_i(a_1,b,c,\alpha) }.\]
        
        By choice of $(\chi_{i,j})_{i,j}$, we can find suitable parameters $d$ from $B$ and $\beta$ from $\mathbf k(K)$ such that $\chi_{i,j}'(x_1;d,\beta,c,\alpha)$ isolates $\tp_{\theta_i}(a_1/\{a_1\}\times B\times\{(c,\alpha)\})$.
        The statement follows.
        To see that the arity is as stated, note that $(\overline{z}^1,\overline{y}^1)$ contains five copies of $y$, $(\overline{z}^2,\overline{y}^2)$ contains six, $(\overline{z}^3,\overline{y}^3)$ contains seven, and $(\overline{z}^4,\overline{y}^4)$ contains one.
    \end{proof}
  
    For the ``Moreover"-part of the statement, recall that $(F,v)$ is a valued field with finite residue field $\mathbb F_q$.
    For each $i\leq 4$, $j\leq\ell_i$, and $(\alpha,\beta)\in \mathbb F_q^{|u_{\mathbf k}^i|+|w_{\mathbf k}^i|}$, define
    \begin{align*}
        R_{i,j}^{\alpha,\beta}:=\Big\{(a,c,d)\in F^2\times \big((F^2)^{|\overline{z}^i|/2}\times (F^2)^{|\overline{y}^i|/2}\big):\ (K,v)\models \zeta_{i,j}(a;c,d;\alpha,\beta)\Big\}.       
    \end{align*}
    
    By the claim, these form a DCD $\mathcal{T}$ for $I=(\varphi_I)^F$.
    Finally, we show that the exponent of this DCD can be reduced from the trivial bound seven (coming from the arity of the piecewise $\mathbf{k}$-still) to five.
    For $i=1$ and $i=4$, just counting parameters shows that there are at most $O(|B|^5)$-many cells of the form $(R_{i,j}^{(\alpha,\beta)})_{(c,d)}$.
    For $i= 2$, the ``Moreover"-part of Lemma \ref{lem: linear swiss cheese} shows that the family $(\chi_{2,j}')_{j\leq \ell_2}$ induces a DCD $\mathcal{T}_2$ of exponent \emph{two} with constant factor polynomial in $q$, for the partitioned formula $\theta_2$. 
    We include a cell of the form $(R_{2,j}^{(\alpha,\beta)})_{(c,d)}$ in $\mathcal{T}(B)$ only if the cell $(\chi_{2,j}^\beta)_d$ is included in $\mathcal{T}_2(B\times\{(c,\alpha)\})$. 
    With this, there are at most $O_q(|B|^{|\overline{z}^2|/2+2})=O_q(|B|^5)$-many cells of this form contained in $\mathcal{T}(B)$.
    Finally, by Lemma \ref{lemma: linear residue images}, the family $(\chi_{3,j}')_{j\leq \ell_3}$ induces a DCD $\mathcal{T}_3$ of exponent \emph{three} with a constant factor polynomial in $q$, for the partitioned formula $\theta_3$, and we include a cell of the form $(R_{3,j}^{(\alpha,\beta)})_{(c,d)}$ in $\mathcal{T}(B)$ only if the cell $(\chi_{3,j}^\beta)_d$ is included in $\mathcal{T}_3(B\times\{(c,\alpha)\})$.
    This leaves at most $O_q(|B|^{|\overline{z}^3|/2+3})=O_q(|B|^5)$-many cells of this form in $\mathcal{T}(B)$.
    In both cases, the constant factor is polynomial in $q$ and we conclude that $\mathcal{T}(B)$ has exponent five, witnessed by a constant factor which is polynomial in $q$.
\end{proof}

With the distal Zarankiewicz bound of \cite{chernikov2023modeltheoreticelekesszabostableominimal} (Theorem \ref{thm: distal zarankiewicz}), we can  immediately derive our point-line incidence bound for the field $(F,v)$ with residue field $\mathbb F_q$.

\begin{proof}[Proof of Theorem \ref{thm: point line explicit bound}]
    Let $I$ be point-line incidence in $F$, and let $A\fin F^2$ and $B\fin F^2$.
    Since $I$ is $K_{2,2}$-free, an application of Theorem \ref{thm: distal zarankiewicz} with the DCD coming from Proposition \ref{proposition: DCD for point line} gives the bound
    \[O_{q}(|A|^{\frac{8}{9}}|B|^{\frac{5}{9}}+|A|+|B|),\]
    since we have $s=d=2$ and $t=5$ in (\ref{eq: distal zarankiewicz bound}).
\end{proof}

\begin{remark}
    Using Remark \ref{remark: polynomial constant}, the constant factor in the bound given by Theorem \ref{thm: point line explicit bound} is polynomial in $q$.
\end{remark}


\bibliographystyle{amsplain}
\bibliography{references}

\end{document}